\newcommand{\Z}{\ensuremath{\mathbb{Z}}\xspace}
\newcommand{\Q}{\ensuremath{\mathbb{Q}}\xspace}
\newcommand{\C}{\ensuremath{\mathbb{C}}\xspace}
\newcommand{\A}{\ensuremath{\mathbb{A}}\xspace}
\newcommand{\Qp}{\ensuremath{\mathbb{Q}_{p}}\xspace}
\newcommand{\Zp}{\ensuremath{\mathbb{Z}_{p}}\xspace}
\newcommand{\m}{\ensuremath{\mathfrak{m}}\xspace}
\newcommand{\OO}{\ensuremath{\mathcal{O}}\xspace}
\newcommand{\U}{\ensuremath{\mathcal{U}}\xspace}
\newcommand{\Frob}{\mathrm{Frob}\xspace}
\newcommand{\oomment}[1]{}
\DeclareMathOperator{\Gal}{Gal}
\DeclareMathOperator{\End}{End}
\DeclareMathOperator{\Hom}{Hom}
\DeclareMathOperator{\Sym}{Sym}
\DeclareMathOperator{\Spec}{Spec}
\DeclareMathOperator{\Spa}{Spa}
\DeclareMathOperator{\Fil}{Fil}
\DeclareMathOperator{\Res}{Res}
\newcommand{\rhobar}{\ensuremath{\overline{\rho}}\xspace}
\newcommand{\T}{\ensuremath{\mathbb{T}}\xspace}
\newcommand{\GL}{\ensuremath{\mathrm{GL}}\xspace}
\newcommand{\cW}{\mathcal{W}\xspace}
\newcommand{\cZ}{\mathcal{Z}\xspace}
\newcommand{\mb}{\mathbb}
\newcommand{\mc}{\mathcal}
\newcommand{\mf}{\mathfrak}
\newcommand{\vp}{\varpi}
\newcommand{\ra}{\rightarrow}
\newcommand{\sub}{\subseteq}
\newcommand{\oo}{\mathcal{O}}
\newcommand{\ol}{\overline}
\newcommand{\ka}{\kappa}
\newcommand{\wt}{\widetilde}
\newtheorem{theorem}{Theorem}[subsection]
\newtheorem{proposition}[theorem]{Proposition}
\newtheorem{corollary}[theorem]{Corollary}
\newtheorem{lemma}[theorem]{Lemma}
\newtheorem{conjecture}[theorem]{Conjecture} 
\newtheorem*{theor}{Theorem}
\theoremstyle{definition}
\newtheorem{definition}[theorem]{Definition}
\newtheorem{remark}[theorem]{Remark}
\mathchardef\mhyphen="2D
\title{Parallel weight $2$ points on Hilbert modular eigenvarieties and the 
parity conjecture}
\author{Christian Johansson and James Newton}
\date{\today}
\begin{document}
\begin{abstract}
Let $F$ be a totally real field and let $p$ be an odd prime which is totally 
split in 
$F$. We define and study one-dimensional `partial' eigenvarieties interpolating 
Hilbert modular forms over $F$ with weight varying only at a single place $v$ 
above $p$. For these eigenvarieties, we show that methods developed by Liu, 
Wan 
and Xiao apply and deduce that, over a boundary annulus in weight space of 
sufficiently 
small radius, the partial eigenvarieties decompose as a disjoint union of 
components which are finite 
over weight space. We apply this result to prove the parity version of the 
Bloch--Kato conjecture for finite slope Hilbert modular forms with trivial central 
character (with a technical assumption if $[F:\Q]$ is odd), by reducing to the 
case of parallel weight $2$. As another 
consequence of our results on partial eigenvarieties, we show, still under the 
assumption that $p$ is totally split in $F$, that the `full' (dimension 
$1+[F:\Q]$) cuspidal Hilbert modular eigenvariety has the property that many (all, if $[F:\Q]$ is even) 
irreducible components contain a classical point with noncritical slopes and 
parallel weight $2$ (with 
some character at $p$ whose conductor can be explicitly bounded), or any other 
algebraic weight.
\end{abstract}
\maketitle

\counterwithin{equation}{subsection}
\section{Introduction}
\subsection{Eigenvarieties near the boundary of weight space}
In recent work, Liu, Wan and Xiao \cite{lwx} have shown that, over a boundary 
annulus in 
weight space of sufficiently small radius, the eigencurve for a definite 
quaternion algebra over $\Q$ decomposes as a disjoint union of components which 
are finite over weight space. On each component, the slope (i.e.~the $p$-adic 
valuation of the $U_p$-eigenvalue) varies linearly with the weight, and in 
particular the slope tends to zero as the weight approaches the boundary of the 
weight space. A notable consequence of this result is that every irreducible 
component of the eigencurve contains a classical weight $2$ point (with 
$p$-part of the Nebentypus character of large conductor, so that the 
corresponding weight-character is close to the boundary of weight space).

The purpose of this note is to extend the methods of \cite{lwx} to establish a 
similar result for certain one-dimensional eigenvarieties 
interpolating Hilbert modular forms (Proposition \ref{lwx}). More precisely, we 
assume that the (odd) 
prime $p$ splits completely in a totally real field $F$, and consider 
`partial eigenvarieties' whose classical points are automorphic forms for 
totally definite quaternion algebras over $F$ whose weights are only allowed to 
vary at 
a single place $v|p$.\footnote{One can also consider 
the slightly more general situation where the assumption is only that the place 
$v$ is 
split.} 

We give two applications of 
this result. The first application, following 
methods and results of Nekov\'{a}\v{r} (for example, \cite{nekparIII}) and 
Pottharst--Xiao 
\cite{px}, is to 
establish new cases of the parity part of the Bloch--Kato conjecture for the 
Galois representations associated to Hilbert modular forms (see the next 
subsection for a precise statement). The idea of the proof is that, using the 
results of \cite{px}, we can reduce the parity conjecture for a Hilbert 
modular newform $g$ of general (even) weight to the parity conjecture for 
parallel weight two Hilbert modular forms, by moving in a $p$-adic family 
connecting $g$ to a parallel weight two form. This parallel weight two form 
will have local factors at places 
dividing $p$ given by ramified principal series representations (moving to the 
boundary of weight space corresponds to increasing the conductor of the ratio 
of the characters defining this principal series representation).  Using our 
results on 
the partial 
eigenvarieties, we carry out this procedure in $d = [F:\Q]$ steps, where each 
step moves one of the weights to two. An additional difficulty when $F\neq \Q$ is to ensure that at 
each step we move to a point which is noncritical at all the places 
$v^{\prime}|p$, so 
that the global triangulation over 
the family provided by the results of \cite{kpx} specialises to a 
triangulation at all of these points. At the fixed $v^{\prime}=v$, this follows 
automatically from the construction. At $v^{\prime}\neq v$ we show, roughly 
speaking, that the locus of points on the partial eigenvariety which are 
critical at $v^{\prime}$ forms a union of connected components, so one cannot 
move from noncritical to critical points.

The second application (see Section \ref{sec:fullevar}) is to establish that 
every irreducible component of the 
`full' eigenvariety (with dimension $[F:\Q]+1$) for a totally definite 
quaternion 
algebra over $F$ contains a classical point with noncritical slopes and 
parallel weight $2$, or any 
other 
algebraic weight.

\subsection{The Bloch--Kato conjecture}
Let $g$ be a normalised cuspidal Hilbert modular newform of weight 
$(k_1,k_2,\ldots,k_d,w=2)$ and level $\Gamma_0(\mf{n})$, over 
a totally real number field $F$, with each $k_i$ even. 
We suppose moreover that the automorphic 
representation associated to $g$ has trivial central character.\footnote{Here 
we use the notation and conventions of \cite[\S 1]{saito} for the weights of 
Hilbert modular forms, and in particular $w=2$ corresponds to the central 
character of the associated automorphic representation having trivial algebraic 
part. Note that in the body of the text this will correspond to taking $w=0$ in 
the weights we define for totally definite quaternion algebras.} 

Let $E$ be the (totally real) number field generated by the Hecke eigenvalues 
$t_v(g)$. For any finite place $\lambda$ of 
$E$, with residue characteristic $p$, denote by $V_{g,\lambda}$ the 
two-dimensional (totally odd, absolutely irreducible) 
$E_\lambda$-representation associated to $g$ which satisfies
\[\det(X-\Frob_v|_{V_{g,\lambda}}) = X^2 - t_v(g) X +q_v\] for all $v\nmid 
\mf{n}p$ ($\Frob_v$ denotes arithmetic Frobenius).

Note that we have $V_{g,\lambda} \cong V_{g,\lambda}^*(1)$. The conjectures of 
Bloch and Kato predict the following formula relating the 
dimension of the Bloch--Kato Selmer group of $V_{g,\lambda}$ to the central 
order of vanishing of an $L$-function:

\begin{conjecture}
	\[\dim_{E_\lambda} H^1_f(F,V_{g,\lambda}) = 
	\mathrm{ord}_{s=0}L(V_{g,\lambda},s) \left(\mathrm{or, equivalently, }= 
	\mathrm{ord}_{s=0}L(g,s)\right).\]
\end{conjecture}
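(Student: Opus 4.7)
The plan is to address the full Bloch--Kato equality
\[\dim_{E_\lambda} H^1_f(F,V_{g,\lambda}) = \mathrm{ord}_{s=0}L(g,s)\]
by splitting it into the two opposite inequalities. I should flag at the outset that the present paper in fact only attacks the parity consequence $\dim_{E_\lambda} H^1_f(F,V_{g,\lambda}) \equiv \mathrm{ord}_{s=0}L(g,s) \pmod{2}$; the full equality appears out of reach of the methods available here, and is open even for $F=\Q$ once the analytic rank is $\geq 2$.

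For the upper bound $\dim H^1_f \leq \mathrm{ord}_{s=0}L$, the approach would be a Kolyvagin/Euler-system argument. Over $\Q$, one uses Kato's zeta elements for modular forms to control the Selmer group; over a totally real field $F$, one would either attempt a base-change construction, or use Heegner/Beilinson--Flach classes coming from CM quadratic extensions $K/F$. Combined with an Iwasawa main conjecture for $V_{g,\lambda}$ over the cyclotomic tower, such a system would yield $\dim H^1_f \leq r$, where $r = \mathrm{ord}_{s=0}L$, via a derivative argument at the central point.

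For the lower bound $\dim H^1_f \geq \mathrm{ord}_{s=0}L$, one would interpolate $V_{g,\lambda}$ over a Hida/Coleman family (using the partial eigenvarieties of Proposition \ref{lwx} to reach a parallel-weight-$2$ specialisation), construct an associated $p$-adic $L$-function, and apply a $p$-adic Gross--Zagier formula relating its derivatives to heights of Selmer classes. For $r=0$, non-vanishing of $L(g,0)$ combined with the Euler system forces $H^1_f = 0$; for $r=1$, classical and $p$-adic Gross--Zagier produce one nontrivial class; for $r \geq 2$, no existing method produces enough classes, and this is the fundamental obstacle. In view of this, the realistic goal of the paper is the parity version, obtained by deforming $g$ to parallel weight $2$ along the partial eigenvarieties of Proposition \ref{lwx}, combining Nekov\'{a}\v{r}'s parity theorem with the triangulation techniques of Pottharst--Xiao \cite{px}, and using the new results to guarantee noncriticality at all places $v' \mid p$ along the deformation.
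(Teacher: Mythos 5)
You have correctly read the situation: the statement you were given is the \emph{Bloch--Kato conjecture} for $V_{g,\lambda}$, and it is labelled as a \texttt{conjecture} in the paper precisely because no proof is given or claimed. There is therefore no proof in the paper to compare against, and your decision not to attempt one is the right call. The paper's contribution is exclusively the mod-$2$ (parity) version stated immediately afterwards and proved as Theorem \ref{parityapp}, and your final paragraph correctly identifies the mechanism: deform $g$ along the partial eigenvarieties of Proposition \ref{lwx} to a noncritical parallel weight $2$ point, propagate parity using \cite[Thm.~A]{px} together with the global triangulations of \cite{kpx}, and invoke Nekov\'{a}\v{r}'s results \cite{nektame} at parallel weight $2$.

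The middle of your proposal --- Kato-style Euler systems and Iwasawa main conjectures for the upper bound, $p$-adic Gross--Zagier for the lower bound in rank $\leq 1$ --- is a reasonable sketch of the known partial results toward the full conjecture, but it is explicitly outside the scope of this paper and should not be read as part of its argument; over a general totally real $F$ several of those ingredients (base change of Kato's system, the relevant main conjecture, rank-$2$ classes) are themselves open. You were right to flag this rather than assert it. In short: nothing is missing, because nothing was there to prove; your answer identifies exactly what the paper does and does not establish.
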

In this conjecture, the $L$-functions are normalised so that the local 
factors (at good 
places) are \[L_v(V_{g,\lambda},s) = L_v(g,s) = 
\left(1-t_vq_v^{-s-1}+q_v^{-2s-1}\right)^{-1}.\]

We refer to the following as the `parity conjecture for $V_{g,\lambda}$':

\begin{conjecture}
	\[\dim_{E_\lambda} H^1_f(F,V_{g,\lambda}) \equiv 
	\mathrm{ord}_{s=0}L(V_{g,\lambda},s) 
	\pmod{2}.\]
\end{conjecture}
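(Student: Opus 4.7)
The plan is to reduce the parity conjecture for $V_{g,\lambda}$ to the known parallel weight $2$ case by inductively moving $g$ along a sequence of $d = [F:\Q]$ one-dimensional $p$-adic families, shrinking one weight down to $2$ at a time. After transferring $g$ via Jacquet--Langlands to an automorphic form on a totally definite quaternion algebra over $F$ (the technical assumption for odd $d$ being what permits an appropriate transfer), I would order the places of $F$ above $p$ as $v_1,\ldots,v_d$ and inductively produce classical points $g = g_0, g_1, \ldots, g_d$ on the relevant partial eigenvarieties, with $g_i$ of weight $2$ at $v_1,\ldots,v_i$, of the original weight at $v_{i+1},\ldots,v_d$, and noncritical at every place above $p$. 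The terminal point $g_d$ is then of parallel weight $2$, has trivial central character, and has ramified principal series local factors of controlled conductor at each $v \mid p$, for which the parity conjecture is already known; propagating parity backwards along the chain yields the result for $g$.

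The step from $g_i$ to $g_{i+1}$ takes place on the partial eigenvariety that varies the weight only at $v_{i+1}$ (with trivial central character). Proposition~\ref{lwx}, together with the linear variation of slopes near the boundary, supplies, on the irreducible component through $g_i$, classical specialisations whose weight at $v_{i+1}$ is $2$ and whose $U_{v_{i+1}}$-slope is arbitrarily small, and in particular noncritical at $v_{i+1}$. The one-parameter family through $g_i$ carries a global triangulation of the associated family of $(\varphi,\Gamma)$-modules by \cite{kpx}, and the parity propagation machinery of \cite{px} (building on \cite{nekparIII}) then transfers the parities of both $\dim_{E_\lambda} H^1_f(F, V_{\bullet, \lambda})$ and $\mathrm{ord}_{s=0} L(V_{\bullet, \lambda}, s)$ between $g_i$ and $g_{i+1}$, provided that at every place $v' \mid p$ the specialisation of the global triangulation at $g_{i+1}$ recovers the correct noncritical refinement.

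The main obstacle, and the delicate point, is to ensure noncriticality at the auxiliary places $v' \neq v_{i+1}$ that are not being varied. At $v_{i+1}$ itself the boundary-slope analysis of \cite{lwx} already forces noncriticality. At the remaining places one has no direct slope control, so the strategy is to establish that on the partial eigenvariety the locus of points that are critical at $v'$ is a union of connected components: because the weight and the relevant refinement data at $v'$ are fixed along the family, the noncriticality condition at $v'$ is locally constant on the rigid analytic structure, so a noncritical point like $g_i$ cannot deform into a critical one within its component. Once this is in place, every classical point on the component through $g_i$ is noncritical at every $v' \mid p$, and the inductive construction closes. I expect this connected-component structure of the critical locus at the auxiliary places to be the main obstacle; the rest is the Pottharst--Xiao propagation together with the parallel weight $2$ base case.
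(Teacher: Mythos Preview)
The statement you were handed is the parity \emph{conjecture} itself, and the paper does not prove it in this generality. What the paper actually establishes is Theorem~\ref{parityapp}, which requires the additional hypotheses that $p$ is odd and totally split in $F$, that $\pi_v$ is not supercuspidal for every $v\mid p$, and (when $[F:\Q]$ is odd) that some finite $v_0\nmid p$ has $\pi_{v_0}$ not principal series. Your outline silently uses all of these: the Jacquet--Langlands transfer to a totally definite quaternion algebra needs the $v_0$ hypothesis in the odd case, the partial eigenvarieties varying one place at a time need $F_v=\Qp$, and landing on the eigenvariety at all requires a finite-slope refinement at every $v\mid p$. So what you have written is a proof sketch for Theorem~\ref{parityapp}, not for the full conjecture; you should state those hypotheses up front.

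With that caveat, your strategy coincides with the paper's. The inductive chain $g=g_0,g_1,\ldots,g_d$ along partial eigenvarieties, the use of Proposition~\ref{lwx} (packaged as Proposition~\ref{prop:wtkpoints}) to reach a noncritical classical weight-$2$ point at the varying place, the global triangulations from \cite{kpx} (Corollaries~\ref{vtrifam} and \ref{trivprime}) fed into \cite[Thm.~A]{px}, and the parallel weight $2$ base case (the paper invokes \cite[Thm.~C]{nektame}) are exactly what the paper does. Your diagnosis of the main obstacle --- noncriticality at the non-varying places $v'\ne v_{i+1}$ --- is also right, and the paper resolves it just as you predict: Lemmas~\ref{wdrepvprime} and \ref{lem:redlocus} show that on $\tilde{\mc{E}}^{v'-\mathrm{fs}}$ the locus where $V_z|_{G_{F_{v'}}}$ is reducible (equivalently, where the $U_{v'}$-slope is $0$ or $k_{v'}-1$) is a union of connected components, so one cannot drift from a noncritical point into the critical-slope locus along the family.
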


Our main result towards the parity conjecture is the following, which is proved 
in Section \ref{sec:parity}:

\begin{theorem}\label{parityapp}
	Let $p$ be an odd prime and let $F$ be a totally real number field in which 
	$p$ splits completely. Let $g$ be a cuspidal Hilbert modular newform for 
	$F$ 
	with weight 
	$(k_1,k_2,\ldots,k_d,2)$ (each $k_i$ even) such that the associated 
	automorphic 
	representation $\pi$ has trivial central character. Let $E$ be the number 
	field generated by the Hecke eigenvalues of $g$ and let $\lambda$ be a 
	finite place of 
	$E$ with residue characteristic $p$.
	
	Suppose that for every place $v|p$, $\pi_v$ is not supercuspidal (in other 
		words, $g$ has finite slope, up to twist, for every $v|p$). If $[F:\Q]$ 
		is odd 
		suppose moreover that there is a finite place $v_0 
	\nmid p$ of $F$ such that $\pi_{v_0}$ is not 
	principal series. Then the parity conjecture holds for $V_{g,\lambda}$.
\end{theorem}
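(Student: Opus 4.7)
The plan is to reduce the parity conjecture for $V_{g,\lambda}$ to the parallel weight two case, which is known by work of Nekov\'{a}\v{r}, by connecting $g$ to a parallel weight two form through a chain of $p$-adic families on the partial eigenvarieties of Proposition \ref{lwx}. First I would use Jacquet--Langlands to transfer $g$ to an automorphic form on a totally definite quaternion algebra $D/F$: when $[F:\Q]$ is even, $D$ can be taken split at all finite places; when $[F:\Q]$ is odd, the hypothesis that $\pi_{v_0}$ is not principal series for some $v_0\nmid p$ allows us to take $D$ ramified precisely at $v_0$ and the archimedean places, so that $g$ corresponds to a classical point on the appropriate partial eigenvariety.

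The heart of the argument is an induction over the places $v_1,\ldots,v_d$ of $F$ above $p$. At the $i$-th step I construct a classical point $g_i$ on a suitable partial eigenvariety, corresponding to a form whose weight is $2$ at $v_1,\ldots,v_i$, agrees with $g$ at the remaining places above $p$, has trivial central character, and is of finite noncritical slope at \emph{every} place above $p$. To pass from $g_{i-1}$ to $g_i$, I apply Proposition \ref{lwx} to the partial eigenvariety varying weight only at $v_i$, with the tame level and components at the other places fixed as prescribed by $g_{i-1}$. The structural result yields weight-$2$ classical points arbitrarily close to the boundary of weight space on the irreducible component through $g_{i-1}$; the slope-tending-to-zero phenomenon makes these points noncritical at $v_i$, and one arranges the family so that the local component at $v_i$ specialises to a ramified principal series with trivial central character, keeping the global central character trivial throughout.

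The main technical difficulty, flagged in the introduction, is to ensure that $g_i$ remains noncritical at each place $v_j$ with $j\neq i$, so that the global trianguline deformation on the partial eigenvariety provided by \cite{kpx} specialises at $g_i$ to the expected triangulation at every place above $p$. Since the weight at $v_j$ is constant along the family, I would show that the locus of points at which the specialisation is critical at $v_j$ is a union of connected components of the partial eigenvariety; as $g_{i-1}$ is noncritical at $v_j$ by the inductive hypothesis, every point of the connected component through $g_{i-1}$, and in particular $g_i$, remains noncritical there. Establishing this local constancy of the noncritical-at-$v_j$ condition, exploiting that the Hodge--Tate--Sen weights at $v_j$ are fixed along the family while only the $v_i$ data vary, is the step I expect to be the main obstacle.

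Once the chain $g=g_0, g_1,\ldots, g_d$ is produced, the parity invariance of $\dim H^1_f$ along trianguline $p$-adic families, due to Pottharst--Xiao \cite{px} and building on Nekov\'{a}\v{r}'s parity machinery \cite{nekparIII}, reduces the parity conjecture for $V_{g,\lambda}$ to that for $V_{g_d,\lambda}$. Since $g_d$ has parallel weight two and trivial central character, the associated compatible system is (up to Tate twist) that of an abelian variety of $\GL_2$-type over $F$, for which the parity conjecture is Nekov\'{a}\v{r}'s theorem. Combining the $d$ reductions with this base case gives the parity conjecture for $V_{g,\lambda}$.
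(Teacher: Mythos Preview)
Your proposal is correct and follows essentially the same approach as the paper: Jacquet--Langlands transfer to a totally definite quaternion algebra, an induction over the places $v_1,\ldots,v_d$ above $p$ using Proposition~\ref{lwx} (via Proposition~\ref{prop:wtkpoints}) to reach parallel weight two, the Pottharst--Xiao machinery to propagate parity along each one-dimensional family, and Nekov\'{a}\v{r}'s theorem (\cite[Thm.~C]{nektame}) for the base case. The only point worth sharpening is your treatment of noncriticality at the fixed places $v_j\ne v_i$: the paper's precise mechanism is Lemma~\ref{lem:redlocus}, which shows that the locus where $V_z|_{G_{F_{v_j}}}$ is reducible coincides with the slope $0$ or $k_{v_j}-1$ locus and is a union of connected components, so moving along the component through $g_{i-1}$ cannot change the $U_{v_j}$-slope from noncritical to critical.
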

\begin{remark}
\begin{enumerate}
	\item For fixed $g$ there is a positive density set of primes $p$ 
	satisfying the 
	assumptions of the above theorem. So combining our theorem with the 
	conjectural `independence of $p$' for the (parity of the) rank of the 
	Selmer group  $H^1_f(F,V_{g,\lambda})$ would imply the parity conjecture 
	for the entire compatible family of Galois representations associated to 
	$g$ (with the assumption on the existence of a suitable $v_0$ if $[F:\Q]$ 
	is 
	odd).
	\item When $k_i = 2$ for all $i$, the above theorem is already contained in 
	\cite{px}. Moreover, stronger results (without the finite slope hypothesis) 
	are given by \cite[Thm.~1.4]{nekmr} and \cite[Thm.~C]{nektame}.  As far as 
	the authors 
	are aware, the only prior results for higher weights are for $p$-ordinary 
	forms (for example, \cite[Thm.~12.2.3]{nekselmer}), except when $F=\Q$ in 
	which case the above theorem follows from the results of \cite{px} and 
	\cite{lwx} (cf.~\cite[Remark 1.10]{lwx}), and there is also a result of 
	Nekov\'{a}\v{r} which applies under a mild technical hypothesis 
	\cite[Thm.~B]{nekmr}.
	\item When $[F:\Q]$ is odd we require the existence of the place $v_0$ in 
	the above theorem in order to switch to a totally definite quaternion 
	algebra. 
\end{enumerate}
\end{remark}

\subsection*{Acknowledgments} The authors wish to thank Liang Xiao for useful 
discussions on the paper \cite{px}, together with John Bergdall, Ga\"{e}tan 
Chenevier and Jan 
Nekov\'{a}\v{r} for helpful 
correspondence. We also thank an anonymous referee for helpful comments. 
C.J.~was supported by NSF grant DMS 1128155 during the initial 
stages of this work.

\section{A halo for the partial eigenvariety}\label{sec:evar}
\subsection{Notation}
We fix an odd prime number $p$\footnote{The assumption that $p$ is odd is not 
essential for this section, but it appears in the results of \cite{px} and 
\cite{nektame} which we 
will apply later, so we have excluded the case $p=2$ throughout this paper  for 
simplicity.}, a totally real number field $F$ with $[F:\Q] = 
d > 1$, and we assume that $p$ splits completely in $F$. We also fix a totally 
definite quaternion 
algebra $D$ over $F$, with discriminant $\delta_D$, and assume that $p$ is 
coprime to $\delta_D$. We 
fix a maximal order $\oo_D \subset D$ and an isomorphism 
$\oo_{D}\otimes_{\oo_F}\widehat{\OO}_F^{\delta_D} \cong 
M_2(\widehat{\OO}_F^{\delta_D}) = \prod_{v\nmid \delta_D}M_2(\oo_{F,v})$. 
Using this isomorphism, we 
henceforth identify $\oo_{D,p}^\times$ and $\prod_{v|p}\GL_2(\oo_{F,v})$. We 
fix the uniformiser $\varpi_v=p$ of $\oo_{F,v}$ for each $v| p$.

For $v|p$, we let $T_{0,v} = \begin{pmatrix}
\oo_{F,v}^\times & 0 \\ 0 & \oo_{F,v}^\times
\end{pmatrix} \subset \GL_2(\oo_{F,v})$, let \begin{align*}I_{v,n} &= 
\left\{\begin{pmatrix}
a & b \\ \varpi_v^nc & d
\end{pmatrix}  \mid a,b,c,d \in \oo_{F,v} \right\}\cap 
\GL_2(\oo_{F,v}),\end{align*} 

\begin{align*}\ol{I}_{v,n} &= 
\left\{\begin{pmatrix}
a & \varpi_v^nb \\ c & d
\end{pmatrix}  \mid a,b,c,d \in \oo_{F,v} \right\}\cap 
\GL_2(\oo_{F,v})\end{align*}

and \begin{align*}
I_{v,1,n} &= \left\{\begin{pmatrix}
	a & b \\ \varpi_vc & d
\end{pmatrix} \in I_{v,1}  \mid b\equiv 0 
\pmod{\varpi_v^n}\right\}, \\ I'_{v,1,n} &= \left\{\begin{pmatrix}
a & \varpi_v^nb \\ \varpi_vc & d
\end{pmatrix} \in I_{v,1,n}  \mid a\equiv d \equiv 1 
\pmod{\varpi_v^{n+1}}\right\}.
\end{align*}

Set \[\Sigma_v^+ = \left\{\begin{pmatrix}
1 & 0 \\ 0 & \vp_v^\beta
\end{pmatrix}  \mid \beta \in \Z_{\geq 0} \right\}\] and \[\Sigma_v^{cpt} = 
\left\{\begin{pmatrix}
1 & 0 \\ 0 & \vp_v^\beta
\end{pmatrix}  \mid \beta \in \Z_{>0} \right\}.\]

Fix a place $v|p$ (this will be the place where the weight varies in our 
families of $p$-adic automorphic forms), and let $K^v = \prod_{v' \ne v}K_{v'}$ 
be a compact open 
subgroup of $(\oo_D\otimes_F \A_{F,f}^{v})^\times$ such that $K = 
K^vI_{v,1}$ is a \emph{neat} subgroup of $(D\otimes_F \A_{F,f})^\times$. More 
precisely, we assume that $x^{-1}D^\times x \cap K \subset 
\oo_F^{\times,+}$ (the totally positive units)
for all $x \in (D\otimes_F\A_{F,f})^\times$. 

We make the following general definitions:

\begin{definition}
	Let $K$ be a compact open subgroup of $(D\otimes_F \A_{F,f})^\times$  and let $N$ be a monoid with $K 
		\subset N \subset (D\otimes_F \A_{F,f})^\times$. Suppose $M$ is a left $R[N]$-module (for some commutative coefficient ring 
	$R$). 
	
	\begin{enumerate}
		\item If $f: D^\times\backslash (D\otimes_F \A_{F,f})^\times 
		\rightarrow M$ is a function and $\gamma \in N$ we define a function $_\gamma|f$ by $_\gamma|f(g) = 
		\gamma f(g\gamma)$.
		\item We define \[H^0(K,M) = \{f: D^\times\backslash (D\otimes_F 
		\A_{F,f})^\times 
		\rightarrow M \mid ~_u|f= f \text{ for all } u \in K  \}.	
		\]
		\item If $K' \subset K$ is a 
		compact open subgroup then we can define a double coset operator 
		\[[KgK']: H^0(K',M) \rightarrow H^0(K,M)\] for any $g \in N$ by 
		decomposing 
		the double coset $KgK' = \coprod_{i}g_iK'$ and defining \[[KgK']f = 
		{\sum_i}~_{g_i}|f.\]
	 \end{enumerate}
\end{definition}

The following lemma is easily checked.
\begin{lemma}\label{neatdecomp}
	Suppose $K$ is a neat compact open subgroup of $(D\otimes_F 
	\A_{F,f})^\times$. Let $g_1, \ldots, g_t$ be double coset representatives 
	for $D^\times\backslash (D\otimes_F \A_{F,f})^\times / K$, and let $M$ be 
	an $R[K]$-module on which $Z(K) := \oo_F^{\times}\cap K = 
	\oo_F^{\times,+}\cap K$ acts trivially (the equality follows from the 
	neatness assumption).
	
	Then the $R$-module map \[f \mapsto (f(g_1),\ldots,f(g_t))\] gives an 
	isomorphism $H^0(K,M) \cong M^{\oplus t}$.
\end{lemma}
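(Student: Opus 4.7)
The plan is to exploit the finite double coset decomposition $(D\otimes_F \A_{F,f})^\times = \coprod_{i=1}^{t} D^\times g_i K$ in order to reduce a function $f \in H^0(K,M)$ to the tuple of its values $(f(g_1),\ldots,f(g_t))$. Unpacking the condition $_u|f = f$ using the definition $_\gamma|f(g) = \gamma f(g\gamma)$, the invariance amounts to $f(gu) = u^{-1}f(g)$ for all $u \in K$, and combined with left $D^\times$-invariance $f(\delta g)=f(g)$ this determines $f$ on the entire double coset $D^\times g_i K$ from the single value $f(g_i)$. The evaluation map $f \mapsto (f(g_i))_i$ is therefore well defined and clearly $R$-linear.

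The substantive point is constructing and verifying an inverse. Given $(m_i) \in M^{\oplus t}$, one would like to set $f(\delta g_i u) := u^{-1}m_i$ for $\delta \in D^\times$ and $u \in K$, which requires checking independence of the expression. Two such expressions $\delta g_i u = \delta' g_i u'$ yield $g_i^{-1}\delta^{-1}\delta' g_i = u(u')^{-1}$, so the element $\gamma := u(u')^{-1}$ lies in the finite group $\Gamma_i := g_i^{-1}D^\times g_i \cap K$, and well-definedness reduces to requiring each $m_i$ to be fixed by $\Gamma_i$. Dually, for any $f \in H^0(K,M)$ the value $f(g_i)$ is automatically $\Gamma_i$-fixed by the same computation, so the surjectivity of the evaluation map is subject to exactly the same constraint.

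This is precisely where the neatness hypothesis is used: by definition $\Gamma_i \subseteq g_i^{-1}D^\times g_i \cap K \subseteq \oo_F^{\times,+}$, and membership in $K$ then gives $\Gamma_i \subseteq \oo_F^{\times,+} \cap K = Z(K)$. The parenthetical identity $\oo_F^\times \cap K = \oo_F^{\times,+} \cap K$ drops out of the same neatness assumption applied at $x=1$, which forces $\oo_F^\times \cap K \subseteq D^\times \cap K \subseteq \oo_F^{\times,+}$. By hypothesis $Z(K)$ acts trivially on $M$, so $M^{\Gamma_i} = M$ for every $i$, and both constraints identified above disappear. Checking that the two constructions are mutually inverse is then immediate from the two defining relations on $f$, so the entire proof is a bookkeeping exercise; no genuine obstacle arises beyond correctly isolating the role of neatness in trivialising the finite stabilisers $\Gamma_i$.
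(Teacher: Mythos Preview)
Your proof is correct and is exactly the standard argument that the paper has in mind; the paper itself omits the proof entirely, saying only that the lemma ``is easily checked.'' One small inaccuracy: you call $\Gamma_i = g_i^{-1}D^\times g_i \cap K$ a \emph{finite} group, but under the neatness hypothesis you only get $\Gamma_i \subseteq Z(K) = \oo_F^{\times,+}\cap K$, and this group is typically infinite (it has finite index in the unit group $\oo_F^{\times,+}$, which has rank $d-1$). Fortunately finiteness plays no role in your argument---what you actually use is that $\Gamma_i$ acts trivially on $M$, and this follows from $\Gamma_i \subseteq Z(K)$ and the hypothesis on $M$.
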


Now suppose we have an integer $w$ and a tuple of integers \[k^v = 
(k_{v'})_{v'|p, v' \ne v}\]
such that $k_{v'} \equiv w \pmod 2$ and $k_{v'} \ge 2$ for all $v'$.

For each $v'$ we define a finite free $\Z_p = \oo_{F_{v'}}$-module with a left 
action 
of $K_{v'}$: \[\mc{L}_{v'}(k_{v'},w) = 
\mathrm{det}^{(w-k_{v'}+2)/2}\Sym^{k_{v'}-2} 
\oo_{F_{v'}}^2\] where $\oo_{F_{v'}}^2$ is the standard representation of 
$\GL_2(\oo_{F,v'})\cong \oo_{D,v'}^\times$.

We then define a finite free $\Zp$-module with a left action of $K^v$: 
\[\mc{L}^v(k^v,w) := \bigotimes_{v'|p, v'\ne v}\mc{L}_{v'}(k_{v'},w)\] 

Let $\chi^v$ be 
the 
character of $T_0^v = \prod_{v'|p, v'\ne v}T_{0,v'}$ given by the highest 
weight of $\mc{L}^v(k^v,w)$ (with respect to the upper triangular Borel). 
Explicity, this character is  
\[\chi^v(t_1,t_2) := 
\prod_{v'}(t_{1,v'})^{\frac{w+k_{v'}-2}{2}}(t_{2,v'})^{\frac{w-k_{v'}+2}{2}}\]

We now recall some terminology from \cite{extended}. We let $R$ be a 
Banach--Tate $\Z_p$-algebra \cite[3.1]{extended} with a multiplicative 
pseudo-uniformiser $\vp$, and let $\kappa: T_{0,v} \ra R^\times$ be a 
continuous character. Assume that the norm on $R$ is adapted to $\kappa$ 
\cite[Def.~3.3.2]{extended} and that $\chi^v\kappa$ is trivial on $Z(K)$. We 
call such a $\kappa$ a \emph{weight}.

For $r \ge r_\kappa$ we get a Banach $R$-module of 
distributions $\mc{D}_{\ka}^r$ \cite[Def.~3.3.9]{extended} equipped with a left 
action of the monoid 
$\Delta_v = I_{v,1}\Sigma_v^+I_{v,1}$.  This is a space of distributions, and we also have (see the end of \cite[\S 3.3]{extended}) 
a 
space of functions $\mc{A}_{\ka}^{<r}$ on $\begin{pmatrix}
1 & 0 \\ \varpi_v\OO_{F,v} & 1
\end{pmatrix}$ (which we identify with functions in a single variable $x$ on 
$\varpi_v\OO_{F,v}$) with a right action of $\gamma=\begin{pmatrix}
a & b\\c & d
\end{pmatrix} \in \Delta_v$ given by \[(f\cdot\gamma)(x) = 
\kappa(a+bx,(\det\gamma)|\det\gamma|_{v}/(a+bx))f\left(\frac{c+dx}{a+bx}\right).\]

There is a natural pairing $\mc{D}_{\ka}^{r}\times \mc{A}_{\ka}^{<r} \ra R$ which identifies $\mc{A}_{\ka}^{<r}$ as the dual of $\mc{D}_{\ka}^{r}$ (and not the other way around!), and hence gives an embedding of $\mc{D}_{\ka}^{r}$ into the dual of $\mc{A}_{\ka}^{<r}$. The $R$-module $H^0(K,\mc{L}^v(k^v,w)\otimes_{\Zp}\mathcal{D}^{r}_{\kappa})$ is 
the space of $p$-adic automorphic forms with fixed weights away from $v$ which 
we will be studying in this section.

We have a natural action of 
$(D\otimes_F\A_{F,f}^{p})^\times \times \Delta_v$ on 
$\mc{L}^v(k^v,w)\otimes_{\Zp}\mathcal{D}^{r}_{\kappa}$, and we get associated 
double coset operators. If $R$ is a $\Qp$-algebra, we can extend this to an 
action of $(D\otimes_F\A_{F,f}^{v})^\times \times \Delta_v$. We will especially 
consider the action on 
$H^0(K,\mc{L}^v(k^v,w)\otimes_{\Zp}\mathcal{D}^{r}_{\kappa})$ of the  
Hecke 
operator $U_v = [K\begin{psmallmatrix} 1 & 0 \\ 0 & 
\varpi_v \end{psmallmatrix}K]$. This Hecke operator is compact because, by 
\cite[Cor.~3.3.10]{extended}, it 
factors as a composition  
\[H^0(K,\mc{L}^v(k^v,w)\otimes_{\Zp}\mathcal{D}^{r}_{\kappa})\rightarrow 
H^0(K,\mc{L}^v(k^v,w)\otimes_{\Zp}\mathcal{D}^{r^{1/p}}_{\kappa})\hookrightarrow
H^0(K,\mc{L}^v(k^v,w)\otimes_{\Zp}\mathcal{D}^{r}_{\kappa})\] where the second 
map is induced by the natural compact inclusion 
$\mathcal{D}^{r^{1/p}}_{\kappa} \hookrightarrow \mathcal{D}^{r}_{\kappa}$.

When $R$ is a $\Qp$-algebra we will also use the (non-compact) Hecke operators 
$U_{v'}= 
p^{-(w-k_{v'}+2)/2}[K\begin{psmallmatrix} 1 & 0 \\ 
0 & 
\varpi_{v'} \end{psmallmatrix}K]$ for the 
places $v \ne v'|p$. We have 
normalised the operators $U_{v'}$ so that they are consistent with $U_v$. In 
particular, when $R$ is a field extension of $\Qp$, the non-zero eigenvalues 
for $U_{v'}$ will have $p$-adic valuation between 
$0$ and $k_{v'}-1$.

\subsection{Locally algebraic weights}\label{ssec:lalgwts}
Suppose $\kappa: T_{0,v} \rightarrow E^\times$ is a weight, where $E$ is a 
finite extension of $\Qp$, and for some positive integer $k$ with $k\equiv w 
\pmod{2}$ the restriction of 
$\kappa$ to an open subgroup $(1+\varpi_v^n\oo_{F_v})^2$ of $T_{0,v}$ 
coincides with the character
\[\chi_k(t_1,t_2) = 
(t_{1})^{\frac{w+k-2}{2}}(t_{2})^{\frac{w-k+2}{2}}.\] Then, denoting the finite 
order character $\kappa/\chi_k$ by 
$\epsilon$, we say that 
$\kappa$ is \emph{locally algebraic of weight} $(k,w)$ \emph{and character} 
$\epsilon$. 
If we make the 
standard identification of the $E$-dual (right) representation 
$\mc{L}_{v}(k,w)^\vee$ with homogeneous degree $k$ polynomials in two 
variables 
$(X,Y)$ then evaluation at $(1,x)$ gives an injective 
$I'_{v,1,n-1}$-equivariant 
map \[ \mc{L}_{v}(k,w)^\vee \rightarrow \mc{A}_{\ka}^{<r}.\] The action of 
$\begin{psmallmatrix}
1 & 0\\ 0 & \varpi_v
\end{psmallmatrix}$ on the right hand side induces the action of 
$p^{-(w-k+2)/2}\begin{psmallmatrix}
1 & 0\\ 0 & \varpi_v
\end{psmallmatrix}$ on the left hand side. Taking duals and using the embedding $\mc{D}_{\ka}^{r}\hookrightarrow (\mc{A}_{\ka}^{<r})^{\vee}$, we 
obtain a surjective $I'_{v,1,n-1}$-equivariant map \[\mc{D}_{\ka}^r \rightarrow 
\mc{L}_{v}(k,w)\otimes_{\Zp}E.\] 

Moreover, if we let $\mc{L}_{v}(k,w,\epsilon)$ denote the 
$I_{v,1,n-1}$-representation obtained from $\mc{L}_{v}(k,w)\otimes_{\Zp}E$ by 
twisting the 
action by $\epsilon$ then we obtain a surjective $I_{v,1,n-1}$-equivariant map 
\[\mc{D}_{\ka}^r \rightarrow 
\mc{L}_{v}(k,w,\epsilon).\] 

\subsection{Comparing level $I_{v,1}$ and $I_{v,1,n-1}$}We retain the set-up of 
the previous subsection, so $\kappa$ is a locally algebra weight. A version of 
the following lemma is commonly used in Hida theory (see for 
example 
\cite[Lem.~2.5.2]{ger}):
\begin{lemma}\label{lem:fslevelindep}
	\leavevmode
	\begin{enumerate}
		\item For $n \ge 2$, the endomorphism $U_v$ of 
	$H^0(K^vI_{v,1,n-1},\mc{L}^v(k^v,w)\otimes_{\Zp}\mathcal{D}^{r}_{\kappa})$
	 factors through the natural inclusion 
	\[H^0(K^vI_{v,1,n-2},\mc{L}^v(k^v,w)\otimes_{\Zp}\mathcal{D}^{r}_{\kappa})
	 \hookrightarrow  
	H^0(K^vI_{v,1,n-1},\mc{L}^v(k^v,w)\otimes_{\Zp}\mathcal{D}^{r}_{\kappa}).\]
	\item The natural inclusion 	
	\[\iota: 
	H^0(K^vI_{v,1},\mc{L}^v(k^v,w)\otimes_{\Zp}\mathcal{D}^{r}_{\kappa})
	\hookrightarrow  
	H^0(K^vI_{v,1,n-1},\mc{L}^v(k^v,w)\otimes_{\Zp}\mathcal{D}^{r}_{\kappa})\] 
	is $U_v$-equivariant and if $h \in \Q_{\ge 0}$ it induces an isomorphism 
	between $U_v$-slope $\le h$ subspaces:
	\[\iota: 
	H^0(K^vI_{v,1},\mc{L}^v(k^v,w)\otimes_{\Zp}\mathcal{D}^{r}_{\kappa})^{\le 
	h}
	\cong  
	H^0(K^vI_{v,1,n-1},\mc{L}^v(k^v,w)\otimes_{\Zp}\mathcal{D}^{r}_{\kappa})^{\le
	 h}\]
	 	\end{enumerate}
\end{lemma}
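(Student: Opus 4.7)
The plan is to prove both parts via explicit computations with coset representatives for the double coset defining $U_v$, in the style of standard Hida theory arguments (cf.\ \cite[Lem.~2.5.2]{ger}). Throughout write $M = \mc{L}^v(k^v,w)\otimes_{\Zp}\mc{D}^r_\kappa$, $V = H^0(K^v I_{v,1},M)$ and $W = H^0(K^v I_{v,1,n-1},M)$.

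For part (1), set $\alpha = \begin{psmallmatrix}1 & 0 \\ 0 & \varpi_v\end{psmallmatrix}$. A direct computation of $\alpha I_{v,1,n-1}\alpha^{-1}$ shows that $I_{v,1,n-1} \cap \alpha I_{v,1,n-1}\alpha^{-1}$ has index $p$ in $I_{v,1,n-1}$, so that $I_{v,1,n-1}\alpha I_{v,1,n-1} = \bigsqcup_j \alpha_j I_{v,1,n-1}$ with representatives $\alpha_j = \begin{psmallmatrix}1 & 0 \\ \varpi_v[c_j] & \varpi_v\end{psmallmatrix}$ for $[c_j]$ running through lifts of $\oo_{F,v}/\varpi_v$; thus $U_v f = \sum_j {_{\alpha_j}|f}$ for $f \in W$. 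The key step is then to produce, for each $u = \begin{psmallmatrix}a & b \\ \varpi_v c & d\end{psmallmatrix} \in I_{v,1,n-2}$ (so $b \in \varpi_v^{n-2}\oo_{F,v}$), a bijection $\sigma_u$ of the indices and elements $u_j \in I_{v,1,n-1}$ satisfying $u\alpha_j = \alpha_{\sigma_u(j)} u_j$. A direct calculation of $\alpha_{j'}^{-1}u\alpha_j$ shows its $(1,2)$ entry equals $b\varpi_v$, which lies in $\varpi_v^{n-1}\oo_{F,v}$ precisely because $n\ge 2$, while its $(2,1)$ entry is congruent to $-[c_{j'}]a + c + d[c_j]\pmod{\varpi_v}$; the unique solution $[c_{j'}] \equiv a^{-1}(c + d[c_j])\pmod{\varpi_v}$ defines $\sigma_u$ as an affine bijection. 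Using the $I_{v,1,n-1}$-invariance of $f$ to rewrite $f(g\alpha_{\sigma_u(j)} u_j) = u_j^{-1} f(g\alpha_{\sigma_u(j)})$, one finds ${_u|(U_v f)}(g) = \sum_j u\alpha_j u_j^{-1} f(g\alpha_{\sigma_u(j)}) = U_v f(g)$, establishing the desired factorization through level $I_{v,1,n-2}$.

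For part (2), the analogous (and simpler) computation at level $I_{v,1}$ yields the \emph{same} representatives $\alpha_j$, which remain distinct modulo $I_{v,1,n-1}$, so $\iota$ is automatically $U_v$-equivariant. Iterating part (1) a total of $n-1$ times produces a canonical factorization $U_v^{n-1} = \iota \circ \phi$ with $\phi: W \to V$, and injectivity of $\iota$ together with its $U_v$-equivariance then forces $\phi \circ \iota = U_v^{n-1}|_V$. On the slope $\le h$ subspaces $U_v^{n-1}$ is invertible, hence $\iota\phi$ is invertible on $W^{\le h}$. Since $\iota$ is an injective $U_v$-equivariant map it satisfies $\iota^{-1}(W^{\le h}) = V^{\le h}$ (a standard consequence of the universal property of slope decompositions for compact operators), so $\phi(W^{\le h}) \subseteq V^{\le h}$; for $w \in W^{\le h}$ one deduces $w = \iota\bigl((U_v^{n-1})^{-1}\phi(w)\bigr) \in \iota(V^{\le h})$. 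Combined with the tautological inclusion $\iota(V^{\le h}) \subseteq W^{\le h}$, this yields the desired isomorphism.

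The main technical point is the matrix computation in part (1): the upper-right entry of $\alpha_{j'}^{-1}u\alpha_j$ picks up exactly one factor of $\varpi_v$, which is why each application of $U_v$ improves the congruence parameter by precisely one step and explains the role of the hypothesis $n \ge 2$. The remainder is formal bookkeeping with Hecke operator coset decompositions and slope decompositions.
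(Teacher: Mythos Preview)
Your proof is correct and follows essentially the same approach as the paper. The paper's argument is more compressed: for part~(1) it simply asserts that the double coset $I_{v,1,n-1}\alpha I_{v,1,n-1}$ equals $I_{v,1,n-2}\alpha I_{v,1,n-1}$ (which is exactly what your permutation-of-cosets computation establishes), and for part~(2) it deduces the slope isomorphism in one line from the invertibility of $U_v$ on the slope~$\le h$ piece, whereas you spell out the factorization $U_v^{n-1}=\iota\circ\phi$ and the compatibility with slope decompositions explicitly.
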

\begin{proof}
First we note that (for $n\geq 1$) the action of $U_v$ on 
$H^0(K^vI_{v,1,n-1},\mc{L}^v(k^v,w)\otimes_{\Zp}\mathcal{D}^{r}_{\kappa})$ is 
given by \[f \mapsto \sum_{i=0}^{p-1} ~_{\begin{psmallmatrix}
1 & 0\\ \varpi_vi & \varpi_v
\end{psmallmatrix}}|f.\] The $U_v$-equivariance in the second statement follows 
immediately from this. The rest of the second statement follows from the first, 
since $U_v$ acts invertibly on the slope $\le h$ subspace.

It follows from a simple computation that the double coset operator 
$[I_{v,1,n-1}\begin{psmallmatrix}
1 & 0\\ 0 & \varpi_v
\end{psmallmatrix} I_{v,1,n-1}]$ is equal to $[I_{v,1,n-2}\begin{psmallmatrix}
1 & 0\\ 0 & \varpi_v
\end{psmallmatrix}I_{v,1,n-1}]$, and the first part follows immediately from 
this.
\end{proof}

Having done all this, composing $\iota$ with the map induced by 
$\mc{D}_{\ka}^{r} \rightarrow 
\mc{L}_{v}(k,w,\epsilon)$,  we obtain a Hecke-equivariant map 

\begin{equation}\label{maptoclass}\pi: 
H^0(K^vI_{v,1},\mc{L}^v(k^v,w)\otimes_{\Zp}\mathcal{D}^{r}_{\kappa}) 
\rightarrow  
H^0(K^vI_{v,1,n-1},\mc{L}^v(k^v,w)\otimes_{\Zp}\mc{L}_v(k,w,\epsilon)),\end{equation}
where the action of $U_v$ on the target is the $\star$-\emph{action} defined by 
multiplying the standard action of $U_v$ by $p^{-(w-k+2)/2}$.

\begin{proposition}\label{partialclass}
	Let $h \in \Q_{\ge 0}$ with $h < k-1$. The map $\pi$ induces an 
	isomorphism between $U_v$-slope $\le h$ subspaces. 
\end{proposition}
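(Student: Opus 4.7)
The plan is to prove this as a Coleman-style classicality theorem. By Lemma \ref{lem:fslevelindep}(2), the level-change inclusion $\iota$ is already an isomorphism on $U_v$-slope $\leq h$ subspaces for any $h\ge 0$, so it suffices to show that the map
\[\bar\pi : H^0(K^vI_{v,1,n-1}, \mc{L}^v(k^v,w)\otimes_{\Zp}\mc{D}_\kappa^r) \longrightarrow H^0(K^vI_{v,1,n-1}, \mc{L}^v(k^v,w)\otimes_{\Zp}\mc{L}_v(k,w,\epsilon))\]
induced by the coefficient projection $c: \mc{D}_\kappa^r \to \mc{L}_v(k,w,\epsilon)$ of \S\ref{ssec:lalgwts} is an isomorphism on slope $\leq h$ subspaces.

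Set $N := \ker(c)$, a closed $\Delta_v$-submodule of $\mc{D}_\kappa^r$. Tensoring the short exact sequence $0 \to N \to \mc{D}_\kappa^r \to \mc{L}_v(k,w,\epsilon) \to 0$ with $\mc{L}^v(k^v,w)$ and applying $H^0(K^vI_{v,1,n-1}, -)$ yields a Hecke-equivariant short exact sequence of Banach $R$-modules. Exactness uses Lemma \ref{neatdecomp}: the level $K^vI_{v,1,n-1}$ is neat and $Z(K)$ acts trivially on each of the three coefficient modules (since $\chi^v\kappa$ is trivial on $Z(K)$), so each $H^0$ is just a finite direct sum of copies of the coefficient module, hence exact.

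The crux is a slope bound on the kernel: $U_v$ acts on $H^0(K^vI_{v,1,n-1}, \mc{L}^v(k^v,w) \otimes_{\Zp} N)$ with all slopes $\geq k-1$ (in the $\star$-normalization compatible with the target). To establish this, I would pass through the embedding $\mc{D}_\kappa^r \hookrightarrow (\mc{A}_\kappa^{<r})^\vee$: the submodule $N$ consists of distributions annihilating $\mc{L}_v(k,w,\epsilon)^\vee \subset \mc{A}_\kappa^{<r}$, so $N$ injects into the continuous dual of the quotient $\mc{A}_\kappa^{<r}/\mc{L}_v(k,w,\epsilon)^\vee$. On this quotient only monomials $x^j$ with $j \ge k-1$ survive, and the explicit formula
\[(f\cdot\gamma_i)(x) = \kappa(1,\varpi_v)\,f\bigl(\varpi_v(i+x)\bigr),\qquad \gamma_i = \begin{psmallmatrix} 1 & 0 \\ \varpi_v i & \varpi_v \end{psmallmatrix},\]
together with the coset decomposition $U_v = \sum_{i=0}^{p-1}[\gamma_i]$ from the proof of Lemma \ref{lem:fslevelindep}, scales each such monomial by $\varpi_v^j = p^j$. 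Combining this with $|\kappa(1,\varpi_v)|=p^{-(w-k+2)/2}$ and the $\star$-normalization factor $p^{-(w-k+2)/2}$ on the target, the resulting operator-norm bound on the quotient translates by duality into the bound $\|U_v^{\star}\|\le p^{-(k-1)}$ on $N$, hence all slopes are $\geq k-1$.

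Given the slope bound, injectivity of $\bar\pi$ on the slope $\leq h$ subspace is immediate for $h<k-1$, and surjectivity then follows from the short exact sequence: since slope decomposition for a compact operator is exact, the slope $\leq h$ pieces fit into a short exact sequence in which the leftmost term vanishes, so the map on slope $\leq h$ pieces is also surjective. The main obstacle is the careful verification of the slope estimate: one must keep track of the source normalization of $U_v$ against the target $\star$-normalization, the factor $\kappa(1,\varpi_v)$, and the $\epsilon$-twist, so as to land on the sharp threshold $k-1$ and not a shifted value. Everything else is formal once this estimate is in place.
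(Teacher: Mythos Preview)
Your approach is correct and is exactly what the paper's terse citation of \cite[Thm.~3.2.5]{han} unpacks to: reduce via Lemma~\ref{lem:fslevelindep}(2) to level $I_{v,1,n-1}$, then run the Coleman--Stevens argument on the kernel $N$ of the coefficient map $\mc{D}_\kappa^r \to \mc{L}_v(k,w,\epsilon)$, using exactness of $H^0$ (Lemma~\ref{neatdecomp}) and exactness of slope decomposition.

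One computational slip to fix: in your formula for $(f\cdot\gamma_i)(x)$ the second argument of $\kappa$ is $(\det\gamma_i)|\det\gamma_i|_v/(a+bx) = \varpi_v\cdot|\varpi_v|_v/1 = 1$, not $\varpi_v$. Indeed $\kappa$ is only a character of $T_{0,v} = (\oo_{F,v}^\times)^2$, so $\kappa(1,\varpi_v)$ is not even defined. The correct formula is simply $(f\cdot\gamma_i)(x) = f(\varpi_v(i+x))$, and the slope bound $\ge k-1$ on $H^0(N)$ then follows directly from the $\varpi_v^j$ scaling on monomials $x^j$ with $j\ge k-1$. The two factors you introduce and cancel are both spurious: there is no $\kappa$ factor, and the $\star$-normalization lives on the \emph{quotient} $\mc{L}_v(k,w,\epsilon)$, not on the sub $N$, so no adjustment enters the kernel estimate. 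With this correction the argument goes through cleanly.
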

\begin{proof}
 By Lemma \ref{lem:fslevelindep} it suffices to show that the map \[ 
 H^0(K^vI_{v,1,n-1},\mc{L}^v(k^v,w)\otimes_{\Zp}\mathcal{D}^{r}_{\kappa}) 
 \rightarrow  
 H^0(K^vI_{v,1,n-1},\mc{L}^v(k^v,w)\otimes_{\Zp}\mc{L}_v(k,w,\epsilon))\] 
 induces an isomorphism between $U_v$-slope $\le h$ subspaces, and this can be 
 proved as in \cite[Thm.~3.2.5]{han}.
\end{proof}

\subsection{The `Atkin--Lehner trick'}\label{ss:altrick}
In this section we establish a result analogous to \cite[Prop.~3.22]{lwx}. Let 
$n \ge 2$ be an integer and suppose \[K_{v'} \subset \begin{pmatrix}
1+\varpi_{v'}^n\oo_{F,v'} & \oo_{F,v'} \\ \varpi_{v'}^n\oo_{F,v'} & 
1+\varpi_{v'}^n\oo_{F,v'}
\end{pmatrix}\] for each $v'|p$, $v' \ne v$. Note that, combined with our 
neatness assumption, this implies that $Z(K)$ is 
contained in $1 + 
p^n\OO_F$. 

Fix an integer $k \ge 2$ with 
the 
same parity as $w$, together with a finite order character $\epsilon = 
(\epsilon_1,\epsilon_2): 
T_{0,v} \ra 
E^\times$, where $\epsilon_1$ and $\epsilon_2$ are characters of 
$(\OO_{F,v}/\varpi_v^n)^\times$ and $\epsilon_2/\epsilon_1$ has conductor 
$(\varpi_{v}^n)$.

Let $\epsilon_\Q$ be the finite order 
Hecke 
character of $\Q^\times\backslash\A_\Q^\times$ associated to the Dirichlet 
character \[(\Z/p^n\Z)^\times \cong (\oo_{F,v}/\vp_v^n)^\times 
\overset{\epsilon_1 \epsilon_2}{\rightarrow} E^\times.\]

We now consider the space of classical automorphic forms 
$S(k,w,\epsilon):=H^0(K^vI_{v,1,n-1},\mc{L}^v(k^v,w)\otimes 
\mc{L}(k,w,\epsilon))$. By Lemma \ref{neatdecomp}, the dimension of 
this 
space is equal to 
$(k-1)p^{n-1}t$ (as we noted above, $Z(K) \subset 1 + 
p^n\OO_F$, so it acts trivially on the coefficients), where \[t = 
|D^\times\backslash 
(D\otimes_F\A_{F,f})^\times/K^vI_{v,1}|\prod_{v'|p,v'\ne v}(k_{v'}-1).\]

Denote the slopes of $U_v$ appearing in $S(k,w,\epsilon)$ by 
$\alpha_0(\epsilon),\ldots,\alpha_{(k-1)p^{n-1}t-1}(\epsilon)$ in 
non-decreasing order. 

\begin{lemma}\label{AtkinLehner}
	We have $\alpha_i(\epsilon) = k-1 - 
	\alpha_{(k-1)p^{n-1}t-1-i}({\epsilon}^{-1})$ 
	for $i = 0,\ldots,(k-1)pt-1$. In particular, the sum (with 
	multiplicities) 
	of the $U_v$ slopes appearing in $S(k,w,\epsilon) \oplus 
	S(k,w,{\epsilon}^{-1})$ is $(k-1)^2p^{n-1}t$.
\end{lemma}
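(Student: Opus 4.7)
The proof follows the standard Atkin--Lehner argument, in the spirit of \cite[Prop.~3.22]{lwx}. Let $\eta \in (D \otimes_F \A_{F,f})^\times$ be the element that is the identity away from $v$ and equal to $\eta_v = \begin{psmallmatrix} 0 & \varpi_v^{n-1} \\ \varpi_v & 0 \end{psmallmatrix}$ at $v$. A direct matrix computation shows that conjugation by $\eta_v$ preserves $I_{v,1,n-1}$, acts on the diagonal torus $T_{0,v}$ by swapping its two coordinates, and satisfies the key identity
\[
\eta_v^{-1} \begin{pmatrix} 1 & 0 \\ 0 & \varpi_v \end{pmatrix} \eta_v = \begin{pmatrix} \varpi_v & 0 \\ 0 & 1 \end{pmatrix}.
\]

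Using these observations, right translation by $\eta$ together with the natural action of $\eta_v$ on the coefficient module $\mc{L}^v(k^v,w)\otimes \mc{L}_v(k,w,\epsilon)$ defines an $E$-linear isomorphism
\[
W \colon S(k,w,\epsilon) \;\xrightarrow{\ \sim\ }\; S(k,w,\epsilon^{-1}),
\]
where the identification of the target uses the finite order of the $\epsilon_i$ together with the constraint $Z(K) \subseteq 1+p^n\OO_F$ to absorb the central-character discrepancy produced by the torus swap. The matrix identity above lifts to the double-coset relation $W^{-1} U_v W = [I_{v,1,n-1} \begin{psmallmatrix} \varpi_v & 0 \\ 0 & 1 \end{psmallmatrix} I_{v,1,n-1}]$. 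Because $\begin{psmallmatrix} 1 & 0 \\ 0 & \varpi_v \end{psmallmatrix}\begin{psmallmatrix} \varpi_v & 0 \\ 0 & 1 \end{psmallmatrix} = \varpi_v \cdot I$ lies in the center, the product of these two Hecke operators is a scalar on each $U_v$-eigenspace, so on the finite-slope part one obtains $W U_v = c \cdot U_v^{-1} W$ for a nonzero $c \in E^\times$. Computing the action of $\varpi_v I$ on $\mc{L}_v(k,w)$ (it acts as $\varpi_v^{w}$) and correcting for the factor $p^{-(w-k+2)/2}$ built into the $\star$-action of $U_v$ then yields $v_p(c) = k-1$.

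The intertwining $W U_v = c \cdot U_v^{-1} W$ produces a bijection between the $U_v$-eigenvalues on $S(k,w,\epsilon)$ and $S(k,w,\epsilon^{-1})$ sending $\alpha \leftrightarrow c/\alpha$, hence slopes $\alpha \leftrightarrow k-1-\alpha$. Since the two spaces have the same dimension $(k-1)p^{n-1}t$, sorting both eigenvalue lists in non-decreasing order of valuation gives $\alpha_i(\epsilon) = k-1 - \alpha_{(k-1)p^{n-1}t-1-i}(\epsilon^{-1})$, and the sum assertion follows by adding this equality over $i$. The main step requiring care is tracking the several normalization factors---the $p^{-(w-k+2)/2}$ in the $\star$-action, the $\epsilon$-twist of $\mc{L}_v(k,w)$, the central action of $\varpi_v I$, and the Hecke-algebra product of the two double cosets---in order to confirm both that $v_p(c) = k-1$ exactly and that $W$ lands in the $\epsilon^{-1}$-isotypic space rather than in the raw swap $(\epsilon_2,\epsilon_1)$.
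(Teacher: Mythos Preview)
Your approach is genuinely different from the paper's. The paper does not use an Atkin--Lehner element at all: instead it passes to $\mathbb{C}$, identifies each contributing local factor $\pi_v$ explicitly as a principal series (using the conductor hypothesis on $\epsilon_2/\epsilon_1$ and \cite[Lem.~3.3]{templier}), reads off the $U_v$-eigenvalue from the inducing data, and then produces the companion form by \emph{globally twisting} $\pi$ by the Hecke character $\epsilon_{\Q}$ attached to $\epsilon_1\epsilon_2$. Your Atkin--Lehner argument is more elementary in that it avoids the representation-theoretic classification of $\pi_v$, and it is closer in spirit to the title of the subsection.

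However, there is a real gap in your identification of the target of $W$. Conjugation by $\eta_v$ swaps the diagonal entries of $I_{v,1,n-1}$, so right translation by $\eta$ (together with the natural $\mc{L}_v(k,w)$-action of $\eta_v$) sends $S(k,w,(\epsilon_1,\epsilon_2))$ to $S(k,w,(\epsilon_2,\epsilon_1))$, not to $S(k,w,\epsilon^{-1}) = S(k,w,(\epsilon_1^{-1},\epsilon_2^{-1}))$. The two targets differ by the character $(\epsilon_1\epsilon_2,\epsilon_1\epsilon_2)$, which factors through $\det$ on $I_{v,1,n-1}$; but this is a nontrivial twist of automorphic representations, and the sentence ``the constraint $Z(K)\subseteq 1+p^n\OO_F$ absorbs the central-character discrepancy'' does not justify identifying the two spaces. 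To pass from $(\epsilon_2,\epsilon_1)$ to $\epsilon^{-1}$ you must twist globally by a finite-order Hecke character extending $\epsilon_1\epsilon_2$ --- i.e.\ by $\epsilon_{\Q}\circ N_{F/\Q}$ --- and check that this twist preserves $U_v$-slopes (it does, since its value at $\varpi_v$ is a root of unity). That extra step is precisely the paper's key move, so once you fill the gap your argument and the paper's converge at the same point.
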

\begin{proof}
First we fix an embedding $\iota: E \hookrightarrow \mathbb{C}$. Using this 
embedding, we regard $\epsilon_\Q$ and $\epsilon$ as complex valued characters. 
The space  
$S(k,w,\epsilon)\otimes_{E,\iota}\mathbb{C}$ can be described in terms of 
automorphic representations for $D$. If $\pi$ contributes to this space,  
the local factor $\pi_v$ is a principal 
series representation of $\GL_2(F_v)$ obtained as the normalised induction of a 
pair of characters \[\mathrm{unr}(\alpha^{-1}\zeta p^{w})\epsilon_1^{-1}\times
\mathrm{unr}(\alpha)\epsilon_2^{-1}\] where $\zeta$ is a 
root of unity. To see 
this, we first note that $\pi_v$ has a non-zero subspace on which $I_{v,1,n-1}$ 
acts via the character $\epsilon^{-1}$ (these are the vectors in 
$\pi_v$ which contribute 
to $S(k,w,\epsilon)\otimes_{E,\iota}\mathbb{C}$). So $\pi_v 
\otimes \epsilon_2\circ\det$ has a non-zero subspace on which $I_{v,1,n-1}$ 
acts via 
$\begin{psmallmatrix}
a & b\\ c & d
\end{psmallmatrix} \mapsto (\epsilon_2/\epsilon_1)(a)$. Applying  
$\begin{pmatrix}
1 & 0\\ 0 & \varpi_v^{n-1}
\end{pmatrix}$ to this subspace, we get a non-zero subspace where the action of 
$I_{v,n}$ is given by the same formula. Therefore the 
conductors of both $\pi_v\otimes \epsilon_2\circ\det$ and its central character 
$\epsilon_2/\epsilon_1$ are $(\varpi_{v
}^{n})$. It follows (e.g.~by 
\cite[Lem.~3.3]{templier}) that $\pi_v\otimes \epsilon_2\circ\det$ is the 
normalised induction of $\mu_1\times\mu_2$ with $\mu_1|_{\OO_{F,v}^\times} = 
\epsilon_2/\epsilon_1$ and $\mu_2|_{\OO_{F,v}^\times} = 1$. The rest of the 
claim is deduced from the fact that the central character of $\pi_v$ is the
product of a finite order unramified character and
$\epsilon_1^{-1}\epsilon_2^{-1}|\cdot|^{-w}_v$.

We now need to compute the eigenvalue for the standard $U_v$ action on 
the subspace of $\pi_v$ where $I_{v,1,n-1}$ acts via $\begin{psmallmatrix}
a & b\\ c & d
\end{psmallmatrix} \mapsto \epsilon_1(a)^{-1}\epsilon_2(d)^{-1}.$ Conjugating 
by $\begin{psmallmatrix}
1 & 0\\0 & \vp
\end{psmallmatrix}$, we can do the same for the standard $U_v$ action defined 
with respect to $\ol{I}_{v,n}$, and we get 
$U_v$-eigenvalue $\alpha p^{{1}/{2}}$. Using the $\star$-action we therefore
get $U_v$-eigenvalue $\alpha p^{\frac{k-1-w}{2}}$. 

Twisting by 
$\epsilon_\Q$ gives a $\pi'$ with local factor $\pi'_v$ the 
normalised induction of \[\mathrm{unr}(\alpha^{-1}\zeta p^{w})\epsilon_2\times
\mathrm{unr}(\alpha)\epsilon_1,\] which contributes to 
$S(k,w,{\epsilon}^{-1})$  with $\star$-action $U_v$-eigenvalue 
$\alpha^{-1}\zeta p^{\frac{w+k-1}{2}}$. 
Summing the two slopes together we get 
$(k-1)$, which gives the desired result.
\end{proof}

\subsection{The weight space of the partial eigenvariety}\label{ssec:partial}
We are going to construct a `partial eigenvariety' out of the spaces 
$H^0(K,\mc{L}^v(k^v,w)\otimes_{\Zp}\mathcal{D}^{r}_{\kappa})$. The underlying weight space $\cW$ is defined by letting \[\cW(A) = \{\kappa \in 
\Hom_{cts}(T_{0,v},A^\times) : \chi^v\kappa|_{Z(K)} = 1\}\] for
algebras $A$ of topologically finite type over $\Qp$. We let $\Delta$ denote the torsion subgroup of $\OO_{F,v}^\times$.

\begin{lemma}\label{wtcomps}
	We have an isomorphism $\cW_{\C_p} \cong \coprod_{\eta,\omega} 
	D_{\mathbb{C}_p}$ where $D$ 
	is the 
	open unit disc and $\eta, \omega$ run over pairs of characters $\eta: 
	\OO_v^\times \rightarrow \C_p^\times, \omega: \Delta \times \Delta 
	\rightarrow 
	\C_p^\times$ such that $\chi^v\eta|_{Z(K)} = 1$ and $\omega$ restricted to 
	the diagonal copy of $\Delta$ is equal to $\eta|_\Delta$.
	
	The isomorphism is given by taking $\kappa$ to 
	$\kappa(\exp(p),\exp(p)^{-1})-1$ in the component labelled by $\eta(t_v) = 
	\kappa(t_v,t_v)$ and $\omega(\delta_1,\delta_2) = 
	\kappa(\delta_1,\delta_2)$.\end{lemma}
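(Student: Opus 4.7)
The plan is to describe continuous characters $\kappa \colon T_{0,v} \to \C_p^\times$ via the decomposition $\OO_{F,v}^\times = \Z_p^\times = \Delta \times (1+p\Z_p)$ (since $p$ is totally split, $\OO_{F,v} = \Z_p$), which gives $T_{0,v} \cong (\Delta \times (1+p\Z_p))^2$. Any continuous character $\kappa$ is uniquely determined by its restrictions to the finite torsion subgroup $\Delta \times \Delta$ and to the pro-$p$ subgroup $(1+p\Z_p)^2$, and I would treat these two pieces separately.

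For the torsion piece, $\kappa|_{\Delta \times \Delta}$ is exactly $\omega$, and the formula $\eta(t) = \kappa(t,t)$ forces $\eta|_\Delta$ to equal the restriction of $\omega$ to the diagonal copy of $\Delta$. For the pro-$p$ piece, the key step is a change of basis in $(1+p\Z_p)^2 \cong \Z_p^2$ from the standard basis $\{(\exp(p),1),(1,\exp(p))\}$ to the ``diagonal/anti-diagonal'' basis $\{(\exp(p),\exp(p)),(\exp(p),\exp(p)^{-1})\}$; the change-of-basis matrix has determinant $-2$, which is a unit in $\Z_p$ because $p$ is odd, so this really is a $\Z_p$-basis. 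Consequently $\kappa|_{(1+p\Z_p)^2}$ is determined by $\eta(\exp(p)) = \kappa(\exp(p),\exp(p))$ together with $1+z := \kappa(\exp(p),\exp(p)^{-1})$, with $z \in D(\C_p)$ by continuity of $\kappa$.

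Conversely, given data $(\eta,\omega,z)$ satisfying the compatibility $\omega|_{\mathrm{diag}} = \eta|_\Delta$, I would reconstruct $\kappa$ by setting $\kappa|_{\Delta \times \Delta} = \omega$ and prescribing $\kappa|_{(1+p\Z_p)^2}$ via the two values above, extending multiplicatively; the diagonal compatibility guarantees that the two pieces glue to a well-defined character on $T_{0,v}$. Finally, since $Z(K) \subset \OO_F^\times$ embeds diagonally into $T_{0,v}$ as scalar matrices, one has $\kappa|_{Z(K)}(a) = \kappa(a,a) = \eta(a)$ for all $a \in Z(K)$; hence the weight-space condition $\chi^v \kappa|_{Z(K)} = 1$ is exactly $\chi^v \eta|_{Z(K)} = 1$, and imposes no further constraint on $z$ or on the off-diagonal values of $\omega$.

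The only substantive point is the ``anti-diagonal'' change of basis, which relies crucially on $p$ being odd so that $-2 \in \Z_p^\times$; the rest amounts to a routine matching of compatibility conditions on each factor.
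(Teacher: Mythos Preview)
Your argument is correct and is essentially an explicit unpacking of the paper's one-line proof. The paper simply asserts that the closure of $Z(K)$ in $T_{0,v}$ is a finite-index subgroup of the centrally embedded $\OO_v^\times$, from which the decomposition follows; your diagonal/anti-diagonal change of basis on $(1+p\Z_p)^2$ (using that $-2 \in \Z_p^\times$) is precisely the concrete manifestation of this, since it identifies $T_{0,v}$ modulo the central $1+p\Z_p$ with one free $\Z_p$-factor (giving the disc coordinate $z$) times the finite torsion data.

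One small point worth making explicit, which is the content of the paper's proof and of the remark immediately following the lemma: the index set of pairs $(\eta,\omega)$ is actually \emph{finite}. Your argument shows the constraint $\chi^v\kappa|_{Z(K)}=1$ is a condition on $\eta$ alone, but you do not comment on how restrictive it is. Because $F$ is totally real with $d>1$, any unit $\epsilon \in Z(K)$ other than $\pm 1$ has infinite multiplicative order in $\OO_{F,v}^\times = \Z_p^\times$ (it cannot be a $(p-1)$-th root of unity), so the closure of $Z(K)$ in the diagonal $\OO_v^\times$ has finite index; hence the condition $\chi^v\eta|_{Z(K)}=1$ pins $\eta$ down up to finitely many choices. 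This is what makes the disjoint union genuinely a finite one and upgrades your bijection of $\C_p$-points to an isomorphism of rigid spaces with finitely many connected components.
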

\begin{proof}
	This follows from the fact that the closure of $Z(K)$ in $T_{0,v}$ 
	is a finite index subgroup of $\OO_v^\times$ centrally embedded in 
	$T_{0,v}$.
\end{proof} \begin{remark}\label{finordchar}
The condition that $\chi^v\eta|_{Z(K)} = 1$ is equivalent 
to the condition that $\eta(t_v) = t_v^w\tilde{\eta}(t_v)$ where $\tilde{\eta}$ 
is a finite order character, trivial on $Z(K)$. In particular, there are 
finitely many possibilities for $\eta$. Moreover, for each $\eta$, if we denote 
by $E_\eta$ the finite extension of $\Qp$ generated by the values of $\eta$  
then the 
open and closed immersion $\coprod_{\omega}D_{\mathbb{C}_p} \hookrightarrow 
\cW_{\C_p}$ given by restricting to the components labelled by $\eta$ is 
defined over $E_\eta$. For each pair $\eta, \omega$ we therefore denote by 
$\cW_{\eta,\omega}$ the corresponding connected component (which is isomorphic 
to the open unit disc over $E_\eta$) of $\cW_{E_\eta}$. 
\end{remark}

If $\kappa$ is a point of $\cW_{\mathbb{C}_p}$ we denote by $z_\kappa$ the 
corresponding point 
of $D_{\mathbb{C}_p}$. If $r \in (0,1)\cap\Q$, the union of open annuli given 
by $r < 
|z_\kappa|_p < 
1$ is denoted 
by $\cW^{>r}$.

\subsection{Lower bound for the Newton polygon}In this section we establish a 
result analogous to \cite[Thm.~3.16]{lwx}, following the approach of \cite[\S 
6.2]{extended}. 
We now return to our general situation: $R$ is a 
Banach--Tate $\Z_p$-algebra with a multiplicative 
pseudo-uniformiser $\vp$, and $\kappa: T_{0,v} \ra R^\times$ is a 
continuous character such that the norm on $R$ is adapted to $\kappa$ and 
$\chi^v\kappa$ is trivial on $Z(K)$. 

As in \cite[(3.2.1)]{extended}, for $\alpha \in \Z_{\ge 0}$, we define 
\[n(r,\vp,\alpha) = \left\lfloor \frac{\alpha\log_p r}{\log_p|\vp|} 
\right\rfloor.\]

\begin{lemma}\label{Upestimate} Assume that there is no $x \in R$ with $1 < |x| 
< 
	|\varpi|^{-1}$.
	Let \[t = |D^\times\backslash 
	(D\otimes_F\A_{F,f})^\times/K^vI_{v,1}|\prod_{v'|p,v'\ne v}(k_{v'}-1)\] 
	
	If we define 
	\[\lambda(0) = 0, \lambda(i+1) = \lambda(i) + n(r,\varpi,\lfloor 
	i/t\rfloor)-n(r^{1/p},\varpi,\lfloor i/t\rfloor)\] then the Fredholm series 
	\[\det(1-TU_v|H^0(K^vI_{v,1},\mc{L}^v(k^v,w)\otimes_{\Zp}\mathcal{D}^{r}_{\kappa}))
	= 	\sum_{n\ge 0}c_nT^n \in R\{\{T\}\}\] satisfies $|c_n| \le 
	|\varpi|^{\lambda(n)}$.
\end{lemma}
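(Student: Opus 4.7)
The plan is to realise $H^0(K^vI_{v,1},\mc{L}^v(k^v,w)\otimes_{\Zp}\mathcal{D}^{r}_{\kappa})$ as a Banach $R$-module with an explicit Schauder basis in which the matrix of the compact operator $U_v$ has controlled entry-wise bounds, and then conclude by the standard non-archimedean Hadamard estimate for Fredholm determinants. This adapts the template of \cite[Thm.~3.16]{lwx} and \cite[\S 6.2]{extended} to the present setting.

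First, I would combine Lemma \ref{neatdecomp} with the fact that $\mc{L}^v(k^v,w)$ is finite free over $\Zp$ of rank $\prod_{v' \ne v}(k_{v'}-1)$ to obtain a Banach $R$-module isomorphism $H^0(K^vI_{v,1},\mc{L}^v(k^v,w)\otimes_{\Zp}\mathcal{D}^{r}_{\kappa}) \cong (\mathcal{D}^{r}_{\kappa})^{\oplus t}$, with $t$ as in the statement. The space $\mathcal{D}^{r}_{\kappa}$ carries the Schauder basis $\{e_\alpha\}_{\alpha\ge 0}$ from \cite[\S 3.3]{extended} with $\|e_\alpha\| = |\varpi|^{n(r,\varpi,\alpha)}$; the hypothesis that no $x \in R$ has $1 < |x| < |\varpi|^{-1}$ ensures that these values lie in the norm group of $R$. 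Rearranging the direct sum basis into blocks of $t$ indexed by $\alpha$ yields a Schauder basis $\{f_i\}_{i \ge 0}$ with $\|f_i\| = |\varpi|^{n(r,\varpi,\lfloor i/t\rfloor)}$.

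Next, by \cite[Cor.~3.3.10]{extended} the operator $U_v$ factors as $(\mathcal{D}^{r}_{\kappa})^{\oplus t} \to (\mathcal{D}^{r^{1/p}}_{\kappa})^{\oplus t} \hookrightarrow (\mathcal{D}^{r}_{\kappa})^{\oplus t}$, with the first map norm non-increasing and the second the natural compact inclusion. After rescaling each basis vector to have norm $1$, the inclusion becomes diagonal with $\alpha$-th entry $|\varpi|^{\mu_\alpha}$, where $\mu_\alpha := n(r,\varpi,\alpha) - n(r^{1/p},\varpi,\alpha) \ge 0$. Composing with the norm non-increasing first map, the matrix of $U_v$ in the rescaled orthonormal basis has $i$-th row sup-norm bounded by $|\varpi|^{\mu_{\lfloor i/t\rfloor}}$.

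Finally, the non-archimedean Hadamard bound for Fredholm determinants gives $|c_n| \le |\varpi|^{s(n)}$, where $s(n)$ is the minimum of $\sum_{i \in I} \mu_{\lfloor i/t\rfloor}$ over $n$-element subsets $I \subset \Z_{\ge 0}$. A short floor-function computation shows that $\alpha \mapsto \mu_\alpha$ is non-decreasing, so this minimum is attained at $I = \{0,1,\ldots,n-1\}$ and equals $\sum_{i=0}^{n-1}\mu_{\lfloor i/t\rfloor} = \lambda(n)$. The principal obstacle in this proof is the explicit identification of the Schauder basis of $\mathcal{D}^{r}_{\kappa}$ and the diagonal form of the inclusion $\mathcal{D}^{r^{1/p}}_{\kappa} \hookrightarrow \mathcal{D}^{r}_{\kappa}$ in it, resting on the norm structure developed in \cite[\S 3.3]{extended}; the monotonicity check and Hadamard estimate are then elementary.
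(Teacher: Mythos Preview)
Your proposal is correct and follows essentially the same approach as the paper. The paper's own proof simply cites \cite[Lemma 6.2.1]{extended} and remarks that the argument there goes through verbatim in the present global set-up; what you have written is precisely a sketch of that argument, using Lemma \ref{neatdecomp} to reduce to $(\mathcal{D}^r_\kappa)^{\oplus t}$, the Schauder basis and factorisation of $U_v$ from \cite[\S 3.3, Cor.~3.3.10]{extended}, and the Hadamard bound together with the monotonicity of $\alpha \mapsto n(r,\varpi,\alpha)-n(r^{1/p},\varpi,\alpha)$.
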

\begin{proof}
	This is essentially \cite[Lemma 6.2.1]{extended}. Our global set-up is 
	slightly different to that in \emph{loc.~cit.}, but the proof goes through 
	verbatim.
\end{proof}

We now fix a component $\mc{W}_{\eta,\omega}$ of weight space. Let $E := 
E_\eta  \subset 
\mathbb{C}_p$ be the subfield generated by the image of $\eta$ (it is generated 
by a $p$-power root of unity). We 
fix a uniformiser $\varpi_E \in E$ 
and normalise the absolute value $|\cdot|_E$ on 
$E$ by $|p| = p^{-1}$. Let $\Lambda = 
\OO_E[[X]]$. We 
have a universal 
weight \[ \kappa_{(\eta,\omega)}: T_{0,v} \rightarrow 
\Lambda^\times\] determined by $\kappa|_{\Delta\times\Delta} = \omega$, 
$\kappa(t_v,t_v) = \eta(t_v)$ and $\kappa(\exp(p),\exp(p)^{-1}) = 1+X$.

We give the complete local ring $\Lambda$ the 
$\m_\Lambda = (\varpi_E,X)$-adic topology. Let $\mathfrak{W}_{\eta,\omega} = 
\Spa(\Lambda,\Lambda)$, 
denote its 
analytic locus by $\mathcal{W}_{\eta,\omega}^{an}$ and let $\U_1 \subset 
\mathcal{W}_{\eta,\omega}^{an}$ be 
the rational subdomain defined by \[\U_1 = \{|\varpi_E| 
\le |X| \ne 0\}.\] 
Pulling back $\U_1$ to the rigid analytic open unit disc 
$\mathcal{W}_{\eta,\omega}$ 
gives the 
`boundary annulus' $|X| \ge |\varpi_E|_E^{-1}$.

We let $R = \OO(\U_1)$. More explicitly, we can describe the elements of 
$R^\circ$ as formal 
power series \[\left\{\sum_{n \in \Z} a_n X^n: a_n \in \OO_E, 
|a_n\varpi_E^n|_E\le 
1, |a_n\varpi_E^n|_E\rightarrow 0 \hbox{ as }n\rightarrow 
-\infty\right\}.\]

$X$ is a topologically nilpotent unit in $R$ and so equipping $R$ with the norm 
$|r| = \inf\{|\varpi_E|_E^{n} \mid r \in X^nR^\circ, n \in \Z\}$ makes $R$ into 
a Banach--Tate $\Z_p$-algebra. This norm has the explicit description: 
\begin{equation}\label{Rwnorm}|\sum_{n \in \Z} a_n X^n| = 
\sup\{|a_n\varpi_E^n|_E\}.\end{equation}
Note that $X$ is a multiplicative pseudo-uniformiser and there is no $x \in R$ 
with $1 < |x| < |X|^{-1}$.

\begin{lemma}
	The norm we have defined on $R$ is adapted to $\kappa_{(\eta,\omega)}$. 
	Moreover, 
	for $t \in T_1 = (1+\varpi_v\OO_{F,v})^2$ we have 
	$|\kappa_{(\eta,\omega)}(t)-1|\le 
	|\varpi_E|_E$.
\end{lemma}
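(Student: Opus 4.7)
The plan is to establish the pointwise estimate on $T_1$ first, since adaptedness (in the sense of \cite[Def.~3.3.2]{extended}) will follow from it together with straightforward bookkeeping on the values of $\kappa_{(\eta,\omega)}$.

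Since $p$ is totally split, $\OO_{F,v} = \Z_p$ and $T_1 = (1+p\Z_p)^2$ is topologically free of rank $2$ over $\Z_p$; because $p$ is odd, $2 \in \Z_p^\times$, so $T_1$ is topologically generated by $(\exp(p),\exp(p))$ and $(\exp(p),\exp(p)^{-1})$. Writing an arbitrary $t \in T_1$ as $(\exp(p)^a,\exp(p)^b)$ with $a,b \in \Z_p$, the defining relations for $\kappa_{(\eta,\omega)}$ yield
\[
\kappa_{(\eta,\omega)}(t) = \eta(\exp(p))^{(a+b)/2} \cdot (1+X)^{(a-b)/2}.
\]
I then bound each factor. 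Binomial expansion gives $(1+X)^m - 1 \in X R^\circ$ for every $m \in \Z_p$, hence $|(1+X)^m - 1| \le |X| = |\varpi_E|_E$. For the other factor, Remark \ref{finordchar} writes $\eta(t_v) = t_v^w \tilde\eta(t_v)$ for a finite-order character $\tilde\eta$, so $\eta(\exp(p)) = \exp(pw) \cdot \tilde\eta(\exp(p))$. The factor $\exp(pw) \in 1+p\Z_p$ is obviously $\equiv 1 \pmod{\varpi_E}$; and $\tilde\eta(\exp(p))$, being the image of an element of the pro-$p$ group $1+p\Z_p$ under a finite-order character, is a $p$-power root of unity in $E$, which satisfies $\zeta - 1 \in \m_E$ since $\Q_p(\zeta)/\Q_p$ is totally ramified. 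Thus $|\eta(\exp(p)) - 1| \le |\varpi_E|_E$; this estimate is preserved under raising to a $\Z_p$-power (reduce by continuity to integer exponents and factor $y^n - 1 = (y-1)(1+y+\cdots+y^{n-1})$), and the ultrametric inequality on $R$ then gives $|\kappa_{(\eta,\omega)}(t) - 1| \le |\varpi_E|_E$.

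For the adaptedness statement, I would verify the two ingredients of Def.~3.3.2: that $\kappa_{(\eta,\omega)}$ takes values in $R^{\circ\times}$, and that its values on a suitable open subgroup of $T_{0,v}$ cluster near $1$ with respect to the pseudo-uniformiser. The integrality follows because $\omega$ lands in roots of unity in $\OO_E^\times$, $\eta(t_v)$ is a unit in $\OO_E$ by the same root-of-unity argument applied to $\tilde\eta$, and the $T_1$-values lie in $1 + \varpi_E R^\circ \subset R^{\circ\times}$ by what was just shown. The convergence condition is exactly the bound on $|\kappa_{(\eta,\omega)}(t) - 1|$ just established, with pseudo-uniformiser $X$ of norm $|\varpi_E|_E$. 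The only subtle point in the whole argument is the estimate $|\tilde\eta(\exp(p)) - 1| \le |\varpi_E|_E$, resting on the elementary but crucial fact that $p$-power roots of unity in any finite extension of $\Q_p$ are principal units; everything else is a direct computation with the explicit formula for $\kappa_{(\eta,\omega)}$ and the norm \eqref{Rwnorm}.
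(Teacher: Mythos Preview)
Your proof is correct and takes essentially the same approach as the paper. The paper's argument is simply a compressed version of yours: it asserts directly that for $t \in T_1$ one has $\kappa_{(\eta,\omega)}(t) = u\zeta(1+X)^\alpha$ with $u \in 1+p\OO_E$, $\zeta \in \OO_E$ a $p$-power root of unity, and $\alpha \in \Z_p$, then observes $u\zeta \in 1+\varpi_E\OO_E$ --- exactly the decomposition you obtain explicitly via the generators $(\exp(p),\exp(p)^{\pm 1})$ and the factorisation $\eta = (\cdot)^w\tilde\eta$.
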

\begin{proof}If $t \in T_1$ we have $\kappa_{(\eta,\omega)}(t) - 1 = 
u\zeta(1+X)^\alpha - 1$ for some $\alpha \in \Z_p$, $u \in 
1+p\OO_{E}$ and $\zeta \in \OO_E$ a $p$-power root of unity. So $u\zeta \in 1+ 
\varpi_E\OO_E$ and
$|\kappa_{(\eta,\omega)}(t)-1|\le |\varpi_E|_E$.
\end{proof}

We can now apply Lemma \ref{Upestimate}.
\begin{corollary}\label{expUpestimate}
Consider the Fredholm series 
\[\det(1-TU_v|H^0(K^vI_{v,1},\mc{L}^v(k^v,w)\otimes_{\Zp}\mathcal{D}^{|\varpi_E|_E}_{\kappa}))
= \sum_{n\ge 0}c_nT^n \in R\{\{T\}\}.\]

Let \[t = |D^\times\backslash 
	(D\otimes_F\A_{F,f})^\times/K^vI_{v,1}|\prod_{v'|p,v'\ne v}(k_{v'}-1).\] 

\begin{enumerate}
	\item We 
	have $c_n =  \sum_{m \ge 0} b_{n,m} X^m \in \Lambda$ for all $n$.
	\item Moreover, $|b_{n,m}\varpi_E^m|_E \le |\varpi_E^{\lambda(n)}|_E$ for 
	all $m,n\ge 0$, where $\lambda(0) = 0,\lambda(1),\ldots$ is a 
	sequence of integers determined by \[\lambda(0) = 0, \lambda(i+1) = 
	\lambda(i) + \lfloor i/t\rfloor-\lfloor i/pt\rfloor.\]
    \item For $z \in \mathbb{C}_p$ with $0<v_p(z)<v_p(\varpi_E)$, we have 
    $v_p(c_n(z)) \ge \lambda(n)v_p(z)$ for every $n \ge 0$, with equality 
    holding 
    if and only if $b_{n,\lambda(n)} \in \OO_E^\times$. If 
    $b_{n,\lambda(n)} \notin \OO_E^\times$, then \[v_p(c_n(z))\ge 
    \lambda(n)v_p(z) 
    + \min\{v_p(z),v_p(\varpi_E)-v_p(z)\}.\]
\end{enumerate}
\end{corollary}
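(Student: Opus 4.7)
The plan is to deduce the corollary from Lemma \ref{Upestimate} applied to $R = \OO(\U_1)$ with multiplicative pseudo-uniformiser $\vp = X$ and radius $r = |\varpi_E|_E$. The hypothesis on the non-existence of elements of norm in $(1, |X|^{-1})$ was noted above, and the hypothesis that the norm is adapted to $\kappa_{(\eta,\omega)}$ is exactly the content of the previous lemma. Since $|X| = |\varpi_E|_E$ by the norm formula \eqref{Rwnorm}, one computes $n(r,X,\alpha) = \alpha$ and $n(r^{1/p},X,\alpha) = \lfloor \alpha/p\rfloor$. Combining this with the elementary floor identity $\lfloor\lfloor i/t\rfloor/p\rfloor = \lfloor i/(pt)\rfloor$ shows that the recursion produced by Lemma \ref{Upestimate} is precisely the one in part (2), yielding $|c_n|_R \le |\varpi_E|_E^{\lambda(n)}$.

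For part (1), I would argue separately that the coefficients lie in $\Lambda$ and not merely in $R^\circ$ (which is strictly larger inside $R$). The distribution module $\mathcal{D}^r_{\kappa_{(\eta,\omega)}}$ carries a natural $\Lambda$-integral lattice preserved by $\Delta_v$, and the factorisation of $U_v$ through a compact inclusion used in Lemma \ref{Upestimate} (cf.~\cite[Lem.~6.2.1]{extended}) already exists at the level of these integral lattices; hence the universal Fredholm determinant already lies in $\Lambda\{\{T\}\}$ before one inverts $X$. Granting $c_n = \sum_{m\ge 0} b_{n,m} X^m$ with $b_{n,m} \in \OO_E$, the norm formula \eqref{Rwnorm} now converts the $R$-norm bound into the coefficientwise bound $|b_{n,m}\varpi_E^m|_E \le |\varpi_E|_E^{\lambda(n)}$, which is part (2).

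For part (3), I would fix $z \in \C_p$ with $0 < v_p(z) < v_p(\varpi_E)$ and estimate $c_n(z) = \sum_m b_{n,m} z^m$ term by term, using two bounds: $v_p(b_{n,m}) \ge (\lambda(n)-m)v_p(\varpi_E)$ from (2), and $v_p(b_{n,m}) \ge 0$ from integrality. For $m \le \lambda(n)$ the first bound produces an excess of $(\lambda(n)-m)(v_p(\varpi_E)-v_p(z))$ above $\lambda(n)v_p(z)$; for $m > \lambda(n)$ the second produces an excess of $(m-\lambda(n))v_p(z)$. Both are non-negative in the relevant range, yielding $v_p(c_n(z)) \ge \lambda(n)v_p(z)$. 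Equality can only be achieved by a dominant term at $m=\lambda(n)$, forcing $b_{n,\lambda(n)} \in \OO_E^\times$. Conversely, if $b_{n,\lambda(n)}$ is not a unit, so that $v_p(b_{n,\lambda(n)}) \ge v_p(\varpi_E)$, then the minimum excess across the three cases $m < \lambda(n)$, $m = \lambda(n)$, $m > \lambda(n)$ becomes $\min\{v_p(\varpi_E)-v_p(z),\ v_p(\varpi_E),\ v_p(z)\} = \min\{v_p(z),\ v_p(\varpi_E)-v_p(z)\}$, exactly as claimed.

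The only real subtlety is the integrality assertion in part (1): the norm bound from Lemma \ref{Upestimate} alone gives only $c_n \in R^\circ$, which contains Laurent tails with negative powers of $X$; so one genuinely has to follow the integral structure on the distributions through the construction of the Fredholm determinant to rule these out. Everything else is formal, once the recursion for $\lambda$ has been matched up.
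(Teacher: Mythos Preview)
Your proposal is correct and follows essentially the same approach as the paper: apply Lemma~\ref{Upestimate} with $\vp=X$ and $r=|\varpi_E|_E$, identify the recursion for $\lambda$ via the computation $n(r,X,\alpha)=\alpha$ and $n(r^{1/p},X,\alpha)=\lfloor\alpha/p\rfloor$ together with the nested-floor identity, and then read off part~(3) by a term-by-term ultrametric estimate. The paper is terser---it cites \cite[Thm.~6.3.2]{extended} for parts~(1) and~(2) and calls part~(3) ``easy to check''---but you have correctly isolated the one nontrivial point, namely that $c_n\in\Lambda$ rather than merely $c_n\in R^\circ$, and your explanation via the $\Lambda$-integral structure on the distribution modules is the right mechanism (and is what lies behind the cited reference).
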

\begin{proof}
The first two parts follow from Lemma \ref{Upestimate}, exactly as in 
\cite[Thm.~6.3.2]{extended}. Note 
that $n(|\varpi_E|_E,X,\lfloor i/t \rfloor) = \lfloor i/t \rfloor$ and 
$n(|\varpi_E|_E^{1/p},X,\lfloor i/t \rfloor) =  \lfloor 1/p \lfloor 
i/t\rfloor\rfloor = \lfloor i/pt\rfloor$. 

Let $z \in \mathbb{C}_p$ with $0<v_p(z)<v_p(\varpi_E)$. It follows immediately 
from 
the second part that  
$v_p(c_n(z)) \ge \lambda(n)v_p(z)$ for every $n \ge 0$, with equality holding 
if and only if $b_{n,\lambda(n)} \in \OO_E^\times$. Finally, if 
$b_{n,\lambda(n)} \notin \OO_E^\times$ then $v_p(b_{n,\lambda(n)}) \ge 
v_p(\varpi_E)$ and the rest of the third part is easy to check.
\end{proof}

\subsection{The spectral curve and partial eigenvariety}\label{ssec:halo}We 
can now construct 
the spectral curve 
$\cZ^{U_v}(k^v,w) \rightarrow \cW$ for the compact operator $U_v$ acting on the 
spaces $H^0(K^vI_{v,1},\mc{L}^v(k^v,w)\otimes_{\Zp}\mathcal{D}^{r}_{\kappa})$, for $\ka=\ka_{U}$ ranging over the weights induced from affinoid open $U\sub \cW$, as well as 
the pullback $\cZ^{U_v}(k^v,w)_{\eta,\omega}^{> r} 
\rightarrow \cW_{\eta,\omega}^{> r}$ for $r \in (0,1)\cap\Q$ and 
$\cW_{\eta,\omega}$ a component 
 of weight space.
\begin{proposition}\label{lwx}Fix a component $\cW_{\eta,\omega}$ of weight 
space, and let 
$E = E_\eta = \Qp(\zeta_{p^c}) \subset \C_p$ be the subfield generated by the 
image of 
$\eta$ as before. Set $s_0 = 1$ if $w$ is even and $s_0 = 2$ if $w$ is 
odd\footnote{$s_0$ is the highest slope appearing in the spaces of classical 
automorphic forms with minimal weight, under the restriction that the weight 
has the same parity as $w$.}. 
Then
	\[\cZ^{U_v}(k^v,w)_{\eta,\omega}^{> |\varpi_E|} = \cZ_0\sqcup 
	\cZ_{(0,s_0)} 
	\sqcup \coprod_{\substack{k \ge 2\\k\equiv w \mod{2}}} \cZ_{k-1} 
	\sqcup 
	\cZ_{(k-1,k+1)}\] is 
	a 
	disjoint union 
	of rigid analytic 
	spaces 
	$\cZ_I$ (with $I$ denoting an interval as in the displayed formula) which 
	are 
	finite and flat over $\cW_{\eta,\omega}^{> |\varpi_E|}$. For each point $x 
	\in \cZ_I$, 
	with 
	corresponding $U_v$-eigenvalue $\lambda_x$, we have $v_p(\lambda_x) \in 
	(p-1)p^{c}v_p(z_{\kappa(x)})\cdot I$.
\end{proposition}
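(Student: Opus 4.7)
The strategy adapts the approach of Liu--Wan--Xiao, as implemented in \cite[\S 6]{extended}, to our partial-eigenvariety setting. The goal is to pin down the Newton polygon of the characteristic power series
\[
P(T) \;=\; \det\!\bigl(1-TU_v\,\big|\, H^0(K^vI_{v,1},\mc{L}^v(k^v,w)\otimes_{\Zp}\mathcal{D}^{|\varpi_E|_E}_{\kappa_{(\eta,\omega)}})\bigr) \;=\; \sum_{n\ge 0} c_n T^n
\]
at every point $z$ of the boundary annulus $\cW_{\eta,\omega}^{> |\varpi_E|}$, and then to read off the claimed decomposition from the Newton polygon breaks via standard Riesz theory.

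The lower bound on the Newton polygon is already in hand from Corollary \ref{expUpestimate}: for any $z$ with $0 < v_p(z) < v_p(\varpi_E)$ one has $v_p(c_n(z)) \ge \lambda(n)\, v_p(z_\kappa)$, and the consecutive differences $\lambda(n+1) - \lambda(n) = \lfloor n/t\rfloor - \lfloor n/(pt)\rfloor$ produce a ``lower polygon'' whose slope on each length-$t$ block $[jt,(j+1)t]$ equals $(j-\lfloor j/p\rfloor)\,v_p(z_\kappa)$. The plan is to show that this lower bound is attained everywhere on the annulus.

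For the matching upper bound I would exploit classical weights. The locally algebraic weights of the form $\chi_k\epsilon$ with $k\ge 2$, $k\equiv w \pmod 2$, and nebentypus $\epsilon$ of sufficiently large conductor (chosen so the corresponding point lies in $\cW_{\eta,\omega}^{> |\varpi_E|}$, as controlled by the parametrisation of Section \ref{ssec:partial} and Remark \ref{finordchar}) accumulate densely on the annulus. At such a weight, Proposition \ref{partialclass} identifies the slope-$<k-1$ subspace of overconvergent forms with that of the classical space $S(k,w,\epsilon)$; pairing $\epsilon$ with $\epsilon^{-1}$, Lemma \ref{AtkinLehner} then shows that the total multiset of $U_v$-slopes in $S(k,w,\epsilon)\oplus S(k,w,\epsilon^{-1})$ is symmetric about $(k-1)/2$ and has total sum $(k-1)^2 p^{n-1} t$. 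Letting $k$ and the conductor of $\epsilon$ vary, the aggregate count of classical slopes (weighted by the multiplicity $t$ coming from $|D^\times\backslash(D\otimes_F\A_{F,f})^\times/K^vI_{v,1}|\prod_{v'\neq v}(k_{v'}-1)$) fills precisely the integer-slope multiplicities predicted by the lower polygon, leaving no room for additional slopes. The constant $s_0$ enters here as the top slope arising from the smallest admissible parity-compatible $k = s_0+1$. Hence equality $v_p(c_n(z)) = \lambda(n)\, v_p(z_\kappa)$ holds at a dense locus of classical points, and by lower semi-continuity of Newton polygons it holds throughout the annulus.

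With the Newton polygon exactly determined, its breaks cluster, after rescaling by the natural unit $(p-1)p^c v_p(z_\kappa)$, into the pairwise disjoint intervals $\{0\}$, $(0,s_0)$, $\{k-1\}$, $(k-1,k+1)$ for $k\ge 2$ with $k\equiv w\pmod 2$. Standard Riesz factorisation of $P(T)$ along these breaks produces relatively prime factorisations over $\cW_{\eta,\omega}^{> |\varpi_E|}$, yielding the clopen decomposition $\cZ_0\sqcup \cZ_{(0,s_0)}\sqcup \coprod_k \cZ_{k-1}\sqcup \cZ_{(k-1,k+1)}$ with each piece finite and flat over the annulus. The main obstacle will be the slope-counting step: checking that the symmetry supplied by the Atkin--Lehner trick together with the classicality cut-off of Proposition \ref{partialclass} exhausts the lower-bound multiplicities in each interval, with correct dependence on the parity of $w$ and on the ramification index $(p-1)p^{c-1}$ of $E/\Qp$, is the delicate part of the argument, as is verifying the requisite density of classical weights on the boundary annulus of weight space.
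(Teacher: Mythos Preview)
Your outline has the right ingredients---the lower bound from Corollary~\ref{expUpestimate} and the Atkin--Lehner input from Lemma~\ref{AtkinLehner}---but the mechanism you describe for combining them is not correct, and the gap is not merely cosmetic.

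First, the Atkin--Lehner trick does \emph{not} show that the Newton polygon of $P(T)$ coincides with the lower-bound polygon at a classical weight. At a fixed locally algebraic weight of weight $(k,w)$ and character~$\epsilon$, Lemma~\ref{AtkinLehner} only tells you that the \emph{sum} of the first $n_k = (k-1)p^{c+1}t$ slopes (averaged over $\epsilon$ and $\epsilon^{-1}$) equals $v_p(z_\kappa)\lambda(n_k)$. Combined with the lower bound, this forces the Newton polygon to pass through the single point $\mathscr{P}_k = (n_k,\lambda(n_k)v_p(z_\kappa))$; it says nothing about the other indices $n\ne n_k$, and there is no reason to expect the Newton polygon to equal the lower bound everywhere. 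Your ``filling in multiplicities'' picture conflates touching at $n_k$ for varying $k$ (which happens at \emph{different} points $z_\kappa$ of the annulus) with full equality at a single point.

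Second, and more seriously, the extension from classical weights to the whole annulus is not a semi-continuity argument. Even granting touching at $\mathscr{P}_k$ for the single classical $z_\kappa$, semi-continuity of Newton polygons goes the wrong way to conclude anything at other $z$. The actual propagation mechanism, which you do not mention, is the integrality criterion in Corollary~\ref{expUpestimate}(3): touching at $\mathscr{P}_k$ forces $b_{n_k,\lambda(n_k)}\in\OO_E^\times$, and this is a condition on the coefficients in $\Lambda$, \emph{independent of the specialisation}~$z$. It therefore yields $v_p(c_{n_k}(z)) = \lambda(n_k)v_p(z)$ for \emph{every} $z$ in $\cW_{\eta,\omega}^{>|\varpi_E|}$. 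One then has to locate the actual vertices $n_k^-,n_k^+$ of the Newton polygon near each $n_k$ (again characterised by which $b_{i,\lambda(i)}$ are units), and use the quantitative gap $\min\{v_p(z),v_p(\varpi_E)-v_p(z)\}$ from Corollary~\ref{expUpestimate}(3) to see that these remain vertices uniformly over the annulus and to separate the open-interval pieces $\cZ_{(k-1,k+1)}$ from the closed ones. Without this step your Riesz factorisation does not glue over the annulus, and the finite-flatness conclusion does not follow.
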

\begin{proof}
	This follows from Corollary \ref{expUpestimate} and Lemma 
	\ref{AtkinLehner}, as 
	in 
	\cite[Proof of Thm.~1.3]{lwx}. We sketch the argument. Note that 
	$\tilde{\eta}$ factors through 
	$(\OO_{F,v}/\varpi^{c+1})^\times$ (if $c \ge 1$, the conductor of 
	$\tilde{\eta}$ is $(\varpi^{c+1})$; if $c = 0$, $\tilde{\eta}$ is 
	either tame or trivial). By passing to a normal compact open 
	subgroup 
	of $K^v$ we may assume that \[K_{v'} \subset \begin{pmatrix}
	1+\varpi_{v'}^{c+2}\oo_{F,v'} & \oo_{F,v'} \\ \varpi_{v'}^{c+2}\oo_{F,v'} & 
	1+\varpi_{v'}^{c+2}\oo_{F,v'}
	\end{pmatrix}\] for each $v'|p$, $v' \ne v$. Now we consider points  
	$\kappa \in \cW(\C_p)$ which are locally algebraic of weight 
	$(k_v,w)$ and character $\epsilon$, with the $\epsilon_i$ factoring through 
	$(\OO_{F,v}/\varpi^{c+2})^\times$ and $\epsilon_2/\epsilon_1$ of conductor 
	$(\varpi^{c+2})$. We furthermore insist that $\kappa$ is in the component 
	$\cW_{\eta,\omega}$. This amounts to requiring that $\kappa|_{\Delta \times 
	\Delta} = \omega$ and $\tilde{\eta} = \epsilon_1\epsilon_2$. We have 
	\[v_p(z_\kappa) = v_p(\exp(p)^{k-2}(\epsilon_1/\epsilon_2)(\exp(p))-1) =  
	1/\phi(p^{c+1}) < v_p(\varpi_E) = 1/\phi(p^c)\] and hence 
	$\kappa \in \cW_{\eta,\omega}^{>|\varpi_E|}(\C_p)$.
	
	For $k \in \Z_{\ge 2}$ with $k \equiv w \pmod{2}$ we set $n_k = 
	(k-1)p^{c+1}t = \dim S(k,w,\epsilon)$, where \[t = 
	|D^\times\backslash 
	(D\otimes_F\A_{F,f})^\times/K^vI_{v,1}|\prod_{v'|p,v'\ne v}(k_{v'}-1)\] as 
	in the beginning of section \ref{ss:altrick}. We now carry out `Step I' of 
	\cite[Proof of Thm.~1.3]{lwx}: this identifies certain points on the Newton 
	polygon of the characteristic power series $\sum_{n \ge 
		0}c_n(z_\kappa)T^n$ of $U_v$ at the weight $\kappa$. We have 
	\begin{align*}v_p(z_\kappa)\lambda(n_k) &= 
	\frac{1}{\phi(p^{c+1})}\sum_{i=0}^{(k-1)p^{c+1}t-1}\left(\left\lfloor 
	\frac{i}{t} \right\rfloor - \left\lfloor \frac{i}{pt} 
	\right\rfloor\right)\\ & =  
	\frac{1}{\phi(p^{c+1})}\left(t\sum_{i=0}^{(k-1)p^{c+1}-1}i 
	-pt\sum_{i=0}^{(k-1)p^{c}-1}i\right)\\ &= \frac{(k-1)^2p^{c+1}t}{2}.
	\end{align*}
	
	So, by Lemma \ref{AtkinLehner}, $v_p(z_\kappa)\lambda(n_k)$ is equal to 
	half 
	the sum of the $U_v$-slopes 
	on $S(k,w,\epsilon)$ and $S(k,w,\epsilon^{-1})$. Combining this with Corollary 
	\ref{expUpestimate}(3) (at the weights corresponding to both $(k,w,\epsilon)$ 
	and $(k,w,\epsilon^{-1})$), we deduce that the sum of the first $n_k$ 
	$U_v$-slopes on 
	$H^0(K^vI_{v,1},\mc{L}^v(k^v,w)\otimes_{\Zp}\mathcal{D}^{|\vp_{E}|_{E}}_{\kappa})$ is 
	$\frac{(k-1)^2p^{c+1}t}{2}$ and that the Newton polygon of $\sum_{n \ge 
	0}c_n(z_\kappa)T^n$ passes through the point 
	$\mathscr{P}_k = (n_k,\lambda(n_k)v_p(z_\kappa))$.
	
	`Step II' of \cite[Proof of Thm.~1.3]{lwx} can now be carried over 
	(replacing $q$ with $p^{c+1}$), and this establishes the rest of the 
	proposition. We denote the first coordinate of the vertices of the Newton 
	polygon either side of $\mathscr{P}_k$ by $n_k^-$ and $n_k^+$ (if  
	$\mathscr{P}_k$ is itself a vertex we have $n_k^- = n_k^+ = n_k$). If 
	$n_k^- \ne n_k^+$ then the slope of the segment containing $\mathscr{P}_k$ 
	is $(k-1)$ (since this is the slope of the lower bound Newton 
	polygon) and $n_k^- \in [n_k-t,n_k]$, $n_k^+ \in [n_k,n_k+t]$. Whether or 
	not $n_k^-$ equals $n_k^+$, it now follows 
	from Corollary 
	\ref{expUpestimate}(3) that $n_k^-$ is the minimal index $i$ in 
	$[n_k-t,n_k]$ with $b_{i,\lambda(i)}\in \OO_E^\times$ and $n_k^+$ is the 
	maximal index $i$ in $[n_k,n_k+t]$ with $b_{i,\lambda(i)}\in \OO_E^\times$. 
	If we specialise to $z \in \cW_{\eta,\omega}^{> |\varpi_E|}$ we have, for 
	all $i \ge 0$: \[v_p(c_{n_k-i}(z)) \ge v_p(z)\lambda(n_k-i)\ge 
	v_p(z)(\lambda(n_k)-(k-1)\phi(p^{c+1})i)\] where the first equality is 
	strict 
	if $n_k-t \le n_k-i < n_k^-$ (minimality of $n_k^-$) and the second 
	inequality is strict if $n_k-i < n_k -t$. These strict inequalities have 
	difference between the two sides at least $\min\{v_p(z), 
	v_p(\varpi_E)-v_p(z)\}$. A similar inequality holds for 
	$n_k+i > n_k^+$. We get equalities when $n_k - i = n_k^-$ and $n_k+i = 
	n_k^+$. We 
	deduce that, if 
	$n_k^- \ne n_k^+$ then $(n_k^-, \lambda(n_k^-)v_p(z))$ and $(n_k^+, 
	\lambda(n_k^+)v_p(z))$ are consecutive vertices of the Newton polygon of 
	$\sum_{n \ge 0}c_n(z)T^n$ for all $z \in \cW_{\eta,\omega}^{> |\varpi_E|}$. 
	The slope of the segment connecting these vertices is 
	$(k-1)\phi(p^{c+1})v_p(z)$ and it therefore passes through 
	$(n_k,\lambda(n_k)v_p(z))$. If $n_k^+ = n_k^- = n_k$ then $(n_k, 
	\lambda(n_k)v_p(z))$ is a vertex of 
	the Newton polygon for all $z \in \cW_{\eta,\omega}^{> |\varpi_E|}$. Now 
	for an interval $I$ as in the statement of the proposition we 
	define $\cZ_I$ to be the (open) subspace of 
	$\cZ^{U_v}(k^v,w)_{\eta,\omega}^{> |\varpi_E|}$ given by demanding that the 
	slope lies in $(p-1)p^{c}v_p(z_{\kappa(x)})\cdot I$. Each $\cZ_I$ pulls 
	back to an affinoid open of (the pull back of) 
	$\cZ^{U_v}(k^v,w)_{\eta,\omega}^{> |\varpi_E|}$ over every affinoid open of 
	$\cW_{\eta,\omega}^{> |\varpi_E|}$ (when $I$ is an open interval we use the 
	bound in terms of 
	$\min\{v_p(z), 
	v_p(\varpi_E)-v_p(z)\}$ to see that we still get an affinoid open). By our 
	control over the Newton polygons (or Hida theory for $\cZ_0$), it follows 
	from \cite[Cor.~4.3]{bu} 
	that the $\cZ_I$ are finite 
	flat over $\cW_{\eta,\omega}^{> |\varpi_E|}$.
\end{proof}
\begin{remark}\begin{enumerate}\item In the above Proposition, we  are proving 
that the partial 
eigenvariety exhibits `halo' behaviour over a boundary annulus whose radius 
depends on the conductor of the character $\tilde{\eta}$. This dependence may 
well be an artifact of the proof.
\item The rest of the arguments in \cite{lwx} also generalise to this 
situation, showing that the slopes over a sufficiently small boundary annulus 
in weight space are a union of finitely many arithmetic progressions with the 
same common difference.
\item Another way to obtain one-dimensional families of $p$-adic automorphic 
forms for the quaternion algebra $D$ is to allow the weight to vary in the 
parallel direction. The methods described here do not seem sufficient to 
establish `halo' behaviour for these one-dimensional families --- one reason is 
that there is no 
longer a sharp numerical classicality theorem in terms of a single compact 
operator in this case.
\end{enumerate}
\end{remark}

Now we fix an integer $n \ge 1$ and assume that $K_{v'} = I'_{v',1,n-1}$ for 
each place $v'|p$ with $v' \ne v$. We set $K = K^vI_{v,1}$. Let $S$ be the set 
of finite places $w$ of 
$F$ where either $w|p$, $D_w$ is 
non-split or $D_w$ is split but $K_w \ne
\OO_{D,w}^\times$. For $w \notin S$ we have Hecke operators \[S_w = 
[K\begin{pmatrix}\varpi_w & 0\\ 0 & \varpi_w \end{pmatrix}K],~T_w = 
[K\begin{pmatrix}1 & 0\\ 0 & \varpi_w \end{pmatrix}K]\] which are independent 
of the choice of uniformiser $\varpi_w \in F_w$.

The spaces $H^0(K,\mc{L}^v(k^v,w)\otimes_{\Zp}\mathcal{D}^{r}_{\kappa})$ give 
rise to a coherent sheaf $\mc{H}$ over $\cZ^{U_v}(k^v,w)$, equipped with an 
action of 
the Hecke operators $\{U_{v'}, v'|p\}$ and $\{S_w, T_w: w \notin S\}$. 
If we 
let $\T$ denote the free commutative $\Zp$-algebra generated by
these Hecke operators, and let $\psi: \T \rightarrow \End(\mc{H})$ be the map 
induced by the Hecke action, then we have an eigenvariety 
datum $(\cW\times \mb{A}^{1},\mc{H},\T,\psi)$. Here we use the notion of eigenvariety datum as defined in \cite[\S3.1]{irreducible}, and we refer to \emph{loc. cit.} for the construction of the eigenvariety associated with an eigenvariety datum  (we remark that we could also have used the eigenvariety $(\cZ^{U_v}(k^v,w),\mc{H},\T,\psi)$, since $\mc{H}$ is supported on $\cZ^{U_v}(k^v,w)$). We denote the 
associated 
eigenvariety by $\mc{E}(k^v,w)$ and, if $r \in (0,1)$, denote its pullback to 
$\cW^{> r}$ by 
$\mc{E}(k^v,w)^{> r}$.

\begin{definition}
A \emph{classical point} of $\mc{E}(k^v,w)$ is a point with locally algebraic 
weight corresponding to a Hecke eigenvector with non-zero image under the map 
$\pi$ of (\ref{maptoclass}), whose systems of Hecke eigenvalue do not arise 
from one-dimensional automorphic representations of 
$(D\otimes_F\A_{F})^\times$. 
\end{definition}
The points excluded in the above definition do not correspond to 
classical 
Hilbert modular forms under the Jacquet--Langlands correspondence. They all 
have parallel weight $2$, and their 
$U_{v'}$ eigenvalues $\alpha_{v'}$ satisfy $v_p(\alpha_{v'}) = 1$ for all 
$v'|p$. 
\begin{proposition}\label{prop:wtkpoints}
	\leavevmode
	\begin{enumerate}
	\item $\mc{E}(k^v,w)$ is equidimensional of dimension 
	$1$ and flat over $\cW$.
	\item Let 
	$C$ be an 
	irreducible component of $\mc{E}(k^v,w)$ and let $k \ge 2$ be an integer 
	with the same parity as 
	$w$. Then $C$ contains a point which is locally algebraic of weight $(k,w)$ 
	and $U_v$-slope $< k-1$ (in particular, this point is classical).
	\end{enumerate}
\end{proposition}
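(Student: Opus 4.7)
For part (1), I would invoke the general eigenvariety formalism. The spectral variety $\cZ^{U_v}(k^v,w)$ is a Fredholm hypersurface in $\cW \times \mb{A}^{1}$, hence equidimensional of dimension $\dim\cW = 1$ and flat over $\cW$. Over any affinoid open $U \subseteq \cW$ and any slope bound $h$, the slope-$\leq h$ piece of $\mc{E}(k^v,w)$ over $U$ is the spectrum of a finite $\OO(U)$-algebra acting faithfully on a finite projective $\OO(U)$-module; since $\OO(U)$ is a regular one-dimensional affinoid, this algebra is torsion-free finite over $\OO(U)$ and hence flat. Assembling these local pieces yields equidimensionality and flatness of $\mc{E}(k^v,w)$ over $\cW$.

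For part (2), fix an irreducible component $C$. The map $C \to \cW$ is flat and locally finite by (1), so its image is open and closed; as $C$ is connected, this image is a single component $\cW_{\eta,\omega}$, onto which $C$ surjects. I would then restrict to the boundary annulus $\cW_{\eta,\omega}^{>|\varpi_{E}|_{E}}$ from Proposition \ref{lwx}, over which $\cZ^{U_v}(k^v,w)$ decomposes as a disjoint union of finite flat pieces $\cZ_I$ with bounded slope intervals $I$. The restriction $C|_{\cW_{\eta,\omega}^{>|\varpi_{E}|_{E}}}$ is a nonempty admissible open in the irreducible $C$, hence itself irreducible; lying inside the disjoint decomposition $\bigsqcup_{I}\cZ_I$ (pulled back via $\mc{E}(k^v,w) \to \cZ^{U_v}(k^v,w)$), it must lie entirely in a single piece $\cZ_{I_C}$. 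The induced map $C|_{\cW_{\eta,\omega}^{>|\varpi_{E}|_{E}}} \to \cW_{\eta,\omega}^{>|\varpi_{E}|_{E}}$ is then finite (as a closed subspace of the finite $\cZ_{I_C}$) and flat, hence surjective.

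Next, I would pick a locally algebraic weight $\kappa_0 \in \cW_{\eta,\omega}^{>|\varpi_{E}|_{E}}$ of weight $(k,w)$ whose finite-order character $\epsilon_1/\epsilon_2$ has conductor large enough that
\[(p-1)p^{c}\, v_{p}(z_{\kappa_0}) \cdot \sup I_{C} < k-1;\]
this is possible because $\sup I_C$ is finite (every interval appearing in Proposition \ref{lwx} is bounded) and taking the conductor of $\epsilon_1/\epsilon_2$ large makes $v_p(z_{\kappa_0})$ arbitrarily small while keeping $\kappa_0$ in the component $\cW_{\eta,\omega}$. By the surjectivity above, $C$ contains a point $x$ above $\kappa_0$, and Proposition \ref{lwx} bounds its $U_v$-slope by $(p-1)p^{c}v_{p}(z_{\kappa_0}) \cdot \sup I_{C} < k-1$. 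Proposition \ref{partialclass} then provides that the Hecke eigensystem at $x$ has nonzero image under $\pi$. The one-dimensional automorphic representations are excluded automatically, since they carry $U_v$-slope exactly $1$ and only contribute at weight $(k,w)$ with $k=2$, where the strict bound $<1$ rules them out.

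The principal obstacle is packaged in Proposition \ref{lwx}: once we have that disjoint decomposition with finite flat components whose slope intervals are bounded, the rest of (2) reduces to the dimension-theoretic argument above combined with the density of locally algebraic weights in the boundary annulus. A subsidiary technical check is that nonempty admissible opens of an irreducible rigid analytic space remain irreducible, which I take to follow from standard properties of the Zariski topology on rigid analytic spaces.
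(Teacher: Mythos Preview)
Your approach matches the paper's in both parts. Part (1) is handled by the standard eigencurve-type argument you sketch (the paper just cites \cite[\S6.1]{extended}), and part (2) is exactly the paper's strategy: land in a single component $\cW_{\eta,\omega}$, restrict over the boundary annulus, use Proposition~\ref{lwx} to bound the slope by a finite multiple of $v_p(z_{\kappa})$, and then choose a locally algebraic weight $(k,w)$ with $v_p(z_\kappa)$ small enough.

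There is, however, one genuine slip. Your ``subsidiary technical check'' is false: a nonempty admissible open of an irreducible rigid analytic space need \emph{not} be irreducible. Irreducibility in rigid geometry is a Zariski-topological notion, but admissible opens are much finer than Zariski opens; already the disjoint union of two closed subdisks inside the unit disk is an admissible open of an irreducible space which is not even connected. In your situation, the preimage in $C$ of the annulus $\cW_{\eta,\omega}^{>|\varpi_E|}$ can certainly have several irreducible (or even connected) components. The fix is immediate and is what the paper does: pick an irreducible component $C'$ of $C^{>|\varpi_E|}$. Then $C'$ maps into a single $\cZ_{I}$ from Proposition~\ref{lwx}, hence is finite over $\cW_{\eta,\omega}^{>|\varpi_E|}$; combined with the flatness from part (1), $C' \to \cW_{\eta,\omega}^{>|\varpi_E|}$ is finite flat with nonempty source, so surjective, and the rest of your argument goes through verbatim.
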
 
\begin{proof}
	\begin{enumerate}
	\item This part is proved in the same way as the well-known analogous 
	statement for the Coleman--Mazur eigencurve (for example, see 
	\cite[\S6.1]{extended}).
	\item This is a consequence of Proposition \ref{lwx}. Indeed, $C$ maps to a 
	single component $\cW_{\eta,\omega}$ of weight 
	space, and we let $E = \Qp(\zeta^c)$ be as in that proposition. Then any 
	irreducible 
	component $C'$ of $C^{> |\varpi_E|}$ is finite flat over an irreducible 
	component of $\cZ^{U_v}(k^v,w)^{> |\varpi_E|}$.  It follows from 
	Proposition \ref{lwx} that $C'$ is finite flat over $\cW_{\eta,\omega}^{> 
	|\varpi_E|}$ 
	and there is an interval $I$ such that for each point 
	$x \in C'$, with 
	corresponding $U_v$-eigenvalue $\lambda_x$, we have $v_p(\lambda_x) \in 
	(p-1)p^{c}v_p(z_{\kappa(x)})\cdot I$. If $v_p(z_{\kappa(x)})$ is 
	sufficiently small, then $v_p(\lambda_x) < k-1$. To finish the proof, we 
	note 
	that we can 
	find a locally algebraic point of weight $(k,w)$ in $\cW_{\eta,\omega}^{> 
	|\varpi_E|}$ with $v_p(z_{\kappa(x)})$ as small as we like, by choosing a 
	character $\epsilon = (\epsilon_1,\epsilon_2)$ 
	(compatible with $\eta$ and $\omega$)
	such that  $\epsilon_1(\exp(p))/\epsilon_2(\exp(p))$ is a $p$-power root of 
	unity of sufficiently large order.\qedhere 
\end{enumerate}
\end{proof}

\begin{remark}
	The existence of partial eigenvarieties (where weights vary above a proper 
	subset of the places dividing $p$) is probably well-known to experts, but 
	they have not been utilised so much in the literature. They were discussed 
	(for 
	definite unitary groups) and 
	used in works of Chenevier 
	\cite{cheapp} and Chenevier--Harris \cite{chehar} to remove regularity 
	hypotheses from theorems concerning the existence and local--global 
	compatibility of Galois representations associated to automorphic 
	representations. They can be viewed as eigenvarieties defined with respect 
	to certain non-minimal parabolic subgroups of the relevant group (see 
	\cite{loe}) 
	--- to recover the setting of this article we let $P \subset 
	(\Res_{F/\Q}\GL_2)_{\Qp} = 
	\prod_{v'|p}\Res_{F_{v'}/\Q_p}\GL_2$ be the parabolic with component 
	labelled by our fixed place $v$ equal to the upper triangular Borel and 
	other 
	components equal to the whole group $\GL_2$. See, for example, \cite{ding} 
	for 
	another application of eigenvarieties defined with respect to a non-minimal 
	parabolic.  We also 
	note that partial 
	Hilbert modular eigenvarieties have been 
	used recently by Barrera, Dimitrov and Jorza to study the 
	exceptional zero conjecture for Hilbert modular forms \cite{BDJorza}.
\end{remark}

\section{The parity conjecture}\label{sec:parity}

We can now apply the preceding results to establish some new cases of the 
parity conjecture for Hilbert modular forms, using \cite[Thms.~A and B]{px}. 
First we need to discuss the family of Galois representations carried by the 
partial eigenvarieties and their $p$-adic Hodge theoretic properties.

\subsection{Galois representations}\label{ssec:galrep}
We begin by noting that there is a continuous $2$-dimensional pseudocharacter 
$T: G_F \rightarrow 
\OO_{\mc{E}(k^v,w)}$ with $T(\Frob_w) = T_w$ for all $w \notin S$. This 
follows 
from \cite[Prop.~7.1.1]{chegln}, the Zariski density of classical points in 
$\mc{E}(k^v,w)$ and the existence of Galois representations associated to 
Hilbert modular forms.

We denote the normalization of $\mc{E}(k^v,w)$ by $\tilde{\mc{E}}$, and we 
also denote by $T$ the pseudocharacter on $\tilde{\mc{E}}$ obtained by 
pullback from $\mc{E}(k^v,w)$. We denote by  $\tilde{\mc{E}}^\mathrm{irr} 
\subset \tilde{\mc{E}}$ the 
(Zariski open, see \cite[Example 2.20]{che}) locus where $T$ is absolutely 
irreducible. Note also that $\tilde{\mc{E}}^\mathrm{irr}$ is Zariski dense in $\tilde{\mc{E}}$, since classical points have irreducible Galois representations.

\begin{proposition}\label{liftdet}
	There is a locally free rank $2$ 
	$\OO_{\tilde{\mc{E}}^\mathrm{irr}}$-module $V$ equipped 
	with a continuous representation \[\rho: G_{F,S} \rightarrow 
	\mathrm{GL}_{\OO_{\tilde{\mc{E}}^\mathrm{irr}}}(V)\] satisfying 
	\[\det(X-\rho(\Frob_w)) = 
	X^2 - T_w X +q_wS_w\] for all $w \notin S$ (where $\Frob_w$ denotes an 
	arithmetic 
	Frobenius).
\end{proposition}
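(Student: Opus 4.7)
The approach is to lift the absolutely irreducible pseudocharacter $T$ on $\tilde{\mc{E}}^{\mathrm{irr}}$ to a genuine representation by combining local lifting results for pseudocharacters with a gluing argument on the normal rigid space $\tilde{\mc{E}}^{\mathrm{irr}}$, using in a crucial way the Zariski density of classical points which carry honest two-dimensional Galois representations.

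First I would observe that $\tilde{\mc{E}}^{\mathrm{irr}}$ is a Zariski open subspace of the normal rigid space $\tilde{\mc{E}}$, hence is itself normal; by definition $T$ is pointwise absolutely irreducible there. By the theorem of Nyssen--Rouquier (equivalently the $n=2$ case of Chenevier's theory of determinants applied to the absolutely irreducible locus), for every point $x \in \tilde{\mc{E}}^{\mathrm{irr}}$ the restriction of $T$ to the completed local ring $\wh{\OO}_{\tilde{\mc{E}}^{\mathrm{irr}},x}$ lifts uniquely (up to conjugation) to a continuous representation $\rho_x: G_{F,S} \to \GL_2(\wh{\OO}_{\tilde{\mc{E}}^{\mathrm{irr}},x})$ on a free rank $2$ module whose trace is $T$.

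Next I would globalise. The uniqueness-up-to-conjugation of the local lifts means that the $\rho_x$ patch (after passing to an \'etale cover) into a canonical Azumaya algebra $\mc{A}$ of rank $4$ on $\tilde{\mc{E}}^{\mathrm{irr}}$, equipped with a continuous $G_{F,S}$-action by $\OO$-algebra automorphisms whose reduced trace recovers $T$; concretely $\mc{A}$ is the Cayley--Hamilton quotient of $\OO_{\tilde{\mc{E}}^{\mathrm{irr}}}[G_{F,S}]$ cut out by $T$. The existence of a locally free rank $2$ sheaf $V$ with $\mc{A}\cong \End_{\OO}(V)$ (which would automatically inherit a continuous $G_{F,S}$-action producing the desired $\rho$, with the trace/determinant identities following from the corresponding identities for reduced trace and reduced norm applied to Frobenius elements) is equivalent to the vanishing of the Brauer class $[\mc{A}]\in \mathrm{Br}(\tilde{\mc{E}}^{\mathrm{irr}})$.

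The main obstacle is therefore verifying this Brauer triviality, which is where I would use the geometry of the eigenvariety. By construction of $\mc{E}(k^v,w)$ via Jacquet--Langlands, every classical point of $\tilde{\mc{E}}^{\mathrm{irr}}$ already carries an honest two-dimensional Galois representation lifting $T$, so the Azumaya algebra $\mc{A}$ is pointwise trivial at each classical point. Since classical points are Zariski dense in each irreducible component of $\tilde{\mc{E}}^{\mathrm{irr}}$ and the latter is normal of dimension one, I would conclude by working on small enough affinoid neighbourhoods of classical points: at a classical point the honest representation furnishes a local trivialisation of $\mc{A}$, which (by normality and dimension one) extends to the full neighbourhood as a reflexive, hence locally free, rank $2$ module. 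These local rank $2$ modules are canonical up to twist by line bundles (from the automorphisms of $\mc{A}$), and can be glued to produce $V$ globally on $\tilde{\mc{E}}^{\mathrm{irr}}$, yielding the desired $\rho$.
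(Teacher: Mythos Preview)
Your overall strategy matches the paper's: lift $T$ to a representation into an Azumaya algebra $\mc{A}$ of rank $4$ over $\tilde{\mc{E}}^{\mathrm{irr}}$, then show that the Brauer class of $\mc{A}$ vanishes so that $\mc{A}\cong\End_{\OO}(V)$ for some rank $2$ vector bundle $V$. The gap is in your argument for Brauer triviality.

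You try to kill the class using only \emph{pointwise} triviality of $\mc{A}$ at the Zariski-dense set of classical points. But knowing $\mc{A}\otimes k(z)\cong M_2(k(z))$ for a dense set of $z$ does not yield that $\mc{A}$ is trivial on an affinoid neighbourhood of $z$: triviality of an Azumaya algebra at a point (or even on a formal completion, as you get from Nyssen--Rouquier) does not in general propagate to an open neighbourhood. Your sentence ``by normality and dimension one, extends to the full neighbourhood as a reflexive, hence locally free, rank $2$ module'' conflates two different extension problems; normality lets you push modules from a dense Zariski open across codimension $\ge 2$ loci, not from a single point outward. And even granting local trivializations on a cover, your final gluing step needs justification: the obstruction to patching the local $V_i$'s is precisely a class in $H^2$ with coefficients in $\OO^\times$, which you do not address.

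The paper closes both gaps with specific inputs. First, it invokes a theorem of Coleman--Mazur to produce, on an admissible affinoid cover $\{U_i\}$ of $\tilde{\mc{E}}^{\mathrm{irr}}$, honest continuous representations $\rho_i:G_{F,S}\to\GL_2(\OO(U_i))$ lifting $T|_{U_i}$; this gives $\mc{A}(U_i)\cong M_2(\OO(U_i))$ and hence local triviality of $\mc{A}$ for the \emph{rigid analytic} (not just \'etale) topology. Second, since the Brauer obstruction then lives in $H^2(\tilde{\mc{E}}^{\mathrm{irr}},\OO^\times)$ for the rigid analytic site, the paper cites de~Jong--van~der~Put to conclude that this $H^2$ vanishes because $\tilde{\mc{E}}^{\mathrm{irr}}$ is separated and one-dimensional. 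Neither of these ingredients is supplied or replaced by your density-of-classical-points argument.
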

\begin{proof}
	First we recall that $T$ canonically lifts to a continuous representation 
	$\rho: G_{F,S} \rightarrow \mc{A}^\times$ where $\mc{A}$ is an Azumaya 
	algebra 
	of rank $4$ over $\tilde{\mc{E}}^\mathrm{irr}$, by 
	\cite[Cor.~7.2.6]{chegln} 
	(see also 
	\cite[Prop.~G]{che}). It remains to show that $\mc{A}$ is isomorphic to the 
	endomorphism algebra of a vector bundle.
	
	On the other hand, \cite[Thm. 5.1.2]{cm} and the remark 
	appearing 
	after this theorem shows that there is an admissible cover of 
	$\tilde{\mc{E}}^\mathrm{irr}$ by affinoid opens $\{U_i\}_{i \in I}$, 
	together with continuous representations \[\rho_i: G_{F,S} \rightarrow 
	\GL_2(\oo(U_i))\] lifting the pseudocharacters $T|_{U_i}$.
	
	By the uniqueness part of \cite[Lem.~7.2.4]{chegln}, we deduce that for 
	each 
	affinoid open $U_i$, we have an isomorphism of $\oo(U_i)$-algebras 
	$\mc{A}(U_i) 
	\cong M_2(\oo(U_i))$. Now, by the standard argument relating the Brauer 
	group to cohomology (see \cite[Thm.~IV.2.5]{milne}; the constructions using 
	either \v{C}ech cohomology or gerbes can be applied, since \v{C}ech 
	cohomology 
	coincides with usual cohomology on 	$\tilde{\mc{E}}^\mathrm{irr}$ by 
	\cite[Prop.~1.4.4]{vdp}) we 
	can associate to $\mc{A}$ an element $F_{\mc{A}}$ of 
	$H^2(\tilde{\mc{E}}^\mathrm{irr},\oo_{\tilde{\mc{E}}^\mathrm{irr}}^\times)$ 
	(crucially, this is the cohomology on the rigid analytic site, not 
	\'{e}tale cohomology). This element is trivial if and only if $\mc{A}$ is 
	isomorphic to the 
	endomorphism algebra of a vector bundle.
	
	Since $\tilde{\mc{E}}^\mathrm{irr}$ is separated and one-dimensional, 
	$H^2(\tilde{\mc{E}}^\mathrm{irr},\mc{F})$ vanishes for any Abelian sheaf 
	$\mc{F}$ by \cite[Cor.~2.5.10, Rem.~2.5.11]{djvdp}. In particular, 
	$F_{\mc{A}}$ 
	is trivial and we are done.
\end{proof}
\begin{remark}
	In fact, \cite[Thm. 5.1.2]{cm} applies over the whole of $\tilde{\mc{E}}$, 
	not just the irreducible locus, so there are representations $\rho_i:  
	\oo(U_i)[G_{F,S}] \rightarrow M_2(\oo(U_i))$ lifting $T$ 
	over each member of an admissible affinoid cover $\{U_i\}_{i \in I}$ of 
	$\tilde{\mc{E}}$. 
	Shrinking the cover if necessary, we may assume that the intersection of 
	any two distinct covering affinoids $U_i\cap U_j$ is contained in 
	$\tilde{\mc{E}}^\mathrm{irr}$, so we obtain a canonical isomorphism of 
	$\oo_{U_i\cap U_j}$-algebras $M_2(\oo_{U_i\cap U_j}) \cong M_2(\oo_{U_i\cap 
	U_j})$ intertwining the representations $\rho_i$ and $\rho_j$.

	This gives the gluing data necessary to define a Galois 
	representation to an Azumaya algebra, which as above is isomorphic to the 
	endomorphism algebra of a vector bundle (by the vanishing of 
	$H^2(\tilde{\mc{E}},\oo_{\tilde{\mc{E}}}^\times)$). So we finally obtain a 
	(possibly non-canonical) Galois representation on a vector bundle over 
	$\tilde{\mc{E}}$, although 
	in what 
	follows we will just work over the irreducible locus as this suffices for 
	our purposes.
\end{remark}

If $z$ is a point of $\tilde{\mc{E}}^\mathrm{irr}$ we denote by $V_z$ the 
specialisation of 
$V$ at $z$, which we regard as a $2$-dimensional representation of $G_{F}$ over 
the residue field $k(z)$. Note that we obtain a continuous character $\det 
\rho: G_F 
\rightarrow 
\Gamma(\tilde{\mc{E}}^\mathrm{irr},\OO_{\tilde{\mc{E}}^\mathrm{irr}})^\times$  
with $\det \rho(z) = \det 
V_z$. In fact $\det \rho$ is constant on connected components of 
$\tilde{\mc{E}}^\mathrm{irr}$. Indeed, for every classical point $z$, $\det 
\rho(z)$ is Hodge--Tate of weight $-1-w$ at every place dividing $p$, and the 
results cited in the proof of Proposition \ref{lgcvprime} below show that this 
property extends to all points $z$. Class field theory and the fact that 
Hodge--Tate 
characters are locally algebraic \cite[Thm.~3, Appendix to Ch.~III]{serre} 
shows that each $\det\rho(z)$ is the product of a finite order character and 
the $(1+w)$-power of the cyclotomic character, so $\det\rho$ is constant on 
connected components.

\begin{proposition}\label{lgcv}
	Let $z$ be a classical point of $\tilde{\mc{E}}^\mathrm{irr}$, locally 
	algebraic of weight 
	$(k,w)$ and character $\epsilon$, and with $U_v$-eigenvalue $\alpha$. Set 
	$\wt{\alpha} = \alpha p^{1+\frac{w-k}{2}}$. Then 
	$V_z|_{G_{F_v}}$ is potentially semistable with Hodge--Tate 
	weights\footnote{We 
		use covariant Dieudonn\'{e} modules so the cyclotomic character has 
		Hodge--Tate 
		weight $-1$.} 
	$(-\frac{w+k}{2},-(1+\frac{w-k}{2}))$, and the associated 
	(Frobenius-semisimplified) Weil--Deligne 
	representation is of the form 
	\[WD(V_z|_{G_{F_v}})^{F-ss} = \left(ur(\wt{\beta})^{-1}\epsilon_1\oplus 
	ur(\wt{\alpha})^{-1}\epsilon_2,N\right)\] for some $\wt{\beta}\in\C_p$ 
	(determined by the determinant of $V_z$) which satisfies 
	$v_p(\wt{\alpha})+v_p(\wt{\beta}) = 1+w$. $N$ is non-zero if and only if 
	$\epsilon_1 = \epsilon_2$ and $\wt{\alpha}/\wt{\beta} = p^{- 1}$. Here we 
	are  
	interpreting characters of $F_v^\times$ as characters of $W_{F_v}$ via the 
	Artin reciprocity map, normalised to 
	take a uniformiser to a geometric Frobenius. 
\end{proposition}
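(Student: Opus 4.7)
The plan is to identify $z$ with a classical cuspidal automorphic representation of $\GL_2(\A_F)$ and then invoke local--global compatibility at $v \mid p$ for Hilbert modular forms. First, by the definition of classical point in Section \ref{ssec:halo}, $z$ corresponds to a Hecke eigenvector with non-zero image in $H^0(K^vI_{v,1,n-1},\mc{L}^v(k^v,w)\otimes\mc{L}_v(k,w,\epsilon))$, and the exclusion of systems arising from one-dimensional automorphic representations of $(D\otimes_F\A_F)^\times$ combined with the Jacquet--Langlands correspondence yields a cuspidal automorphic representation $\Pi$ of $\GL_2(\A_F)$ whose infinity type is determined by $(k^v, k, w)$ and whose finite-prime Hecke eigenvalues at places $w \notin S$ match those carried by $\mc{E}(k^v,w)$ at $z$. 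By local--global compatibility for Hilbert modular forms at $v \mid p$ (Carayol, T.~Saito, T.~Liu, \ldots, with the remaining ramified cases completed by Skinner, Mok, etc.), $V_z|_{G_{F_v}}$ is de Rham, hence potentially semistable by Berger's theorem, with the asserted Hodge--Tate weights $(-\tfrac{w+k}{2},-(1+\tfrac{w-k}{2}))$ and with $WD(V_z|_{G_{F_v}})^{F-ss}$ given by local Langlands applied to the smooth part of $\Pi_v$ (with the usual half-integer Tate twist in the normalisation).

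Next I would identify $\Pi_v$ from the local data $(k,w,\epsilon,\alpha)$. The non-vanishing of $\alpha$ forces $\Pi_v$ to be non-supercuspidal and hence a (possibly reducible) principal series. I would then repeat the computation done in the proof of Lemma \ref{AtkinLehner}, but now applied to the variable place $v$ with locally algebraic weight $(k,w)$ and character $\epsilon$: the $I_{v,1,n-1}$-line corresponding to $z$, upon conjugating by $\begin{psmallmatrix}1 & 0 \\ 0 & \vp_v^{n-1}\end{psmallmatrix}$, becomes an $\ol{I}_{v,n}$-line on which the standard $U_v$ acts by an eigenvalue that can be read directly off the inducing characters. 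Inverting this computation pins down $\Pi_v$ as the normalised induction of a pair of characters whose restrictions to $\OO_{F,v}^\times$ are $\epsilon_1$ and $\epsilon_2$; the values at a uniformiser are determined by $\alpha$ (giving the character with $\wt\alpha$ and $\epsilon_2$) and by $\det V_z|_{G_{F_v}}$ (giving the character with $\wt\beta$ and $\epsilon_1$). The identity $v_p(\wt\alpha) + v_p(\wt\beta) = 1+w$ is then a direct consequence of the Hodge--Tate weight of $\det V_z|_{G_{F_v}}$ being $-(1+w)$, as observed just before the statement of the proposition.

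Finally I would analyse when $N \ne 0$. The induced representation fails to be irreducible precisely when the ratio of the two inducing characters is $|\cdot|_v^{\pm 1}$; this forces equality of the ramified parts, i.e.~$\epsilon_1 = \epsilon_2$, and the condition $\wt\alpha/\wt\beta \in \{p^{-1}, p\}$ on the unramified parts. Only the choice $\wt\alpha/\wt\beta = p^{-1}$ is compatible with the classical bound $v_p(\alpha) \le k-1$ on $U_v$-eigenvalues; in this case $\Pi_v$ is a twist of Steinberg and the Frobenius-semisimplified Weil--Deligne representation acquires the expected non-trivial monodromy, while in every other case the induction is irreducible and $N = 0$.

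The main obstacle will be chasing through the various normalisation conventions: the $\star$-action shift from the standard $U_v$ by $p^{-(w-k+2)/2}$; the Tate twist in the normalisation of local Langlands that converts principal series characters into characters of $W_{F_v}$; the identification of characters of $F_v^\times$ with characters of $W_{F_v}$ via geometric versus arithmetic Frobenius (the latter being the convention implicit in Proposition \ref{liftdet}); and the shift between algebraic weights and Hodge--Tate weights. Reconciling these gives the precise numerical formulas in the statement, namely $\wt\alpha = \alpha p^{1+(w-k)/2}$ and the exponent $-1$ in $\mathrm{ur}(\wt\alpha)^{-1}\epsilon_2$. Modulo these bookkeeping matters, the proof is essentially automatic once the automorphic identification has been made.
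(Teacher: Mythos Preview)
Your approach is essentially the same as the paper's: both reduce the statement to the local--global compatibility theorem of Saito, as completed by Skinner and T.~Liu, together with the observation that the $\star$-action of $U_v$ differs from the standard action by $p^{-(w-k+2)/2}$, so that $\wt\alpha = \alpha p^{1+(w-k)/2}$ is the standard $U_v$-eigenvalue. The paper's proof is a two-sentence citation; you have spelled out the automorphic identification of $\Pi_v$ (via the computation of Lemma~\ref{AtkinLehner}) and the $N\ne 0$ analysis explicitly, which is fine.

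One small correction in your $N\ne 0$ step: the claim that only $\wt\alpha/\wt\beta = p^{-1}$ is ``compatible with the classical bound $v_p(\alpha)\le k-1$'' does not actually distinguish the two cases. Using $v_p(\wt\alpha)+v_p(\wt\beta)=1+w$, the possibilities $\wt\alpha/\wt\beta \in\{p^{-1},p\}$ give $v_p(\alpha)\in\{(k-2)/2,\,k/2\}$, both of which lie in $[0,k-1]$. The correct reason is that when $\Pi_v$ is a twist of Steinberg, the relevant $\epsilon$-isotypic line in $\Pi_v$ is one-dimensional, so the $U_v$-eigenvalue is uniquely determined by $\Pi_v$; the direct computation you already invoke from Lemma~\ref{AtkinLehner} then forces $\wt\alpha/\wt\beta = p^{-1}$.
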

\begin{proof}
	This follows from the local-global compatibility theorem of Saito 
	\cite{saito}, as completed by Skinner \cite{sklgc} and T.~Liu \cite{tliu}. 
	Note that $\alpha$ is an eigenvalue for the $\star$-action of $U_v$ on 
	$S(k,w,\epsilon)$ so $\wt{\alpha}$ is the corresponding eigenvalue for the 
	standard 
	action of $U_v$.
\end{proof}
\begin{definition}
	We say that a classical point $z$ of weight $(k,w)$ is \emph{critical} if 
	$V_z|_{G_{F_v}}$ is decomposable and $v_p(U_v(z)) = k-1$. 
	
\end{definition}

\begin{remark}
	It follows from weak admissibility that $WD(V_z|_{G_{F_v}})$ fails to be 
	Frobenius-semisimple if and only if the two characters 
	$ur(\wt{\beta})^{-1}\epsilon_1, ur(\wt{\alpha})^{-1}\epsilon_2$ are equal 
	(in which case we also have $N = 0$).
\end{remark}

In what remains of this section we give details about the triangulations of the 
$p$-adic Galois representations for noncritical classical points of 
$\tilde{\mc{E}}^\mathrm{irr}$ and apply the results of \cite{kpx} to establish 
the existence of global triangulations for the family of Galois representations 
over this eigenvariety. Everything here should be unsurprising to experts, but 
we have written out some details because in the literature it is common to 
restrict to the semistable case, whilst it is crucial for us to consider 
classical points which are only potentially semistable.

\subsubsection{Triangulation at $v$ for classical points}
Suppose $z \in \tilde{\mc{E}}^\mathrm{irr}(L)$ is a noncritical classical point 
(with $L/\Qp$ 
finite), locally algebraic of weight 
$(k,w)$ and character $\epsilon$, and with $U_v$-eigenvalue $\alpha$. Suppose 
moreover that $V_z|_{G_{F_v}}$ is indecomposable and potentially crystalline. 
It follows from Proposition \ref{lgcv} that this representation becomes 
crystalline over an Abelian extension of $F_v$. So the 
$L$-Weil--Deligne representation $WD(V_z|_{G_{F_v}})$ has an admissible 
filtration after extending scalars to the $\Qp$-algebra $L_\infty = 
\cup_N 
L\otimes_{\Qp}\Qp(\zeta_{p^N})$ (see \cite[\S 4.4]{colmeztri}). Moreover, we 
can explicitly describe this admissible filtered Weil--Deligne module $D_z$ as 
in \cite[\S 4.5]{colmeztri}. Indeed, when the action of $W_{F_v}$ on $D_z$ is 
semisimple, we let $e_1$ and $e_2$ be basis vectors 
with $W_{F_v}$ action given by $ur(\wt{\beta})^{-1}\epsilon_1$ and 
$ur(\wt{\alpha})^{-1}\epsilon_2$ respectively. When the action of $W_{F_v}$ is 
not semisimple, we have $ur(\wt{\beta})^{-1}\epsilon_1 = 
ur(\wt{\alpha})^{-1}\epsilon_2$ and there are basis vectors $e_1, e_2$ with 
$W_{F_v}$ action given by $\sigma e_1 = 
ur(\wt{\beta})^{-1}\epsilon_1(\sigma)e_1$ and $\sigma e_2 = 
ur(\wt{\beta})^{-1}\epsilon_1(\sigma)(e_2 - 
(\deg(\sigma))e_1)$\footnote{$\sigma$ maps 
to the $\deg(\sigma)$ power of arithmetic Frobenius in the absolute Galois 
group of the residue field.}.   The 
filtration is given by 

\begin{equation*}
\Fil^i(L_\infty\otimes_L D_z)=
\begin{cases}
0 & \text{if}\ i > -(1+\frac{w-k}{2}) \\
L_\infty\cdot(\gamma e_1 + e_2) & \text{if}\ -\frac{w+k}{2} < i \le 
-(1+\frac{w-k}{2})\\
L_\infty\otimes_L D_z & \text{if}\ i\le -\frac{w+k}{2}
\end{cases}
\end{equation*}
where $\gamma \in L_\infty$ is the (invertible) value of a certain Gauss sum in 
the semisimple case and $\gamma = 0$ in the non-semisimple case.

It follows from \cite[Prop.~4.13]{colmeztri} that $V_z|_{G_{F_v}}$ is 
trianguline. To describe the triangulations we adopt the notation of 
\cite{colmeztri}. So $\mathscr{R}$ denotes the Robba ring over $L$ and if 
$\delta: \Qp^\times \rightarrow L^\times$ is a continuous character 
$\mathscr{R}(\delta)$ denotes that $(\phi,\Gamma)$-module obtained by 
multiplying the action of $\phi$ on $\mathscr{R}$ by $\delta(p)$ and the action 
of $\gamma \in \Gamma = \Gal(\Qp(\mu_{p^\infty})/\Qp)$ by 
$\delta(\chi_{\mathrm{cyc}}(\gamma))$. If $V$ is an $L$-representation of 
$G_{\Qp}$ we denote by 
$\mathbf{D}_{\text{rig}}(V)$ the slope zero $(\phi,\Gamma)$-module over 
$\mathscr{R}$ associated to $V$ by \cite[Prop.~1.7]{colmeztri}.

Now we have recalled the necessary notation we can recall the precise statement 
of \cite[Prop.~4.13]{colmeztri}: we have two triangulations (they coincide in 
the non-semisimple case) \begin{align*} 0 
&\rightarrow  
\mathscr{R}(x^{\frac{w+k}{2}}ur(\wt{\beta})^{-1}\epsilon_1) \rightarrow 
\mathbf{D}_{\text{rig}}(V_z) \rightarrow 
\mathscr{R}(x^{(1+\frac{w-k}{2})}ur(\wt{\alpha})^{-1}\epsilon_2)\rightarrow 0 \\
0 &\rightarrow  
\mathscr{R}(x^{\frac{w+k}{2}}ur(\wt{\alpha})^{-1}\epsilon_2) \rightarrow 
\mathbf{D}_{\text{rig}}(V_z) \rightarrow 
\mathscr{R}(x^{(1+\frac{w-k}{2})}ur(\wt{\beta})^{-1}\epsilon_1)\rightarrow 0
\end{align*}

We can rewrite the first of these triangulations as

\[  0 \rightarrow  
\mathscr{R}(\delta^{-1}\det \rho (z)) \rightarrow 
\mathbf{D}_{\text{rig}}(V_z) \rightarrow 
\mathscr{R}(\delta(z))\rightarrow 0 \]

where we define the continuous character 
$\delta: F_v^\times \rightarrow 
\Gamma(\tilde{\mc{E}}^\mathrm{irr},\OO_{\tilde{\mc{E}}^\mathrm{irr}})^\times$ 
by $\delta(p) = U_v^{-1}$ 
and 
$\delta|_{\OO_{F_v}^\times} = \kappa|_{\{1\} \times \OO_{F_v}^\times}$.

Next, we consider the case where $V_z|_{G_{F_v}}$ is not potentially 
crystalline. 
Following \cite[\S 4.6]{colmeztri} we can describe the associated admissible 
filtered Weil--Deligne module $D_z$: letting $e_2$ and $e_1$ be basis vectors 
with $W_{F_v}$ action given by $ur(p\wt{\alpha})^{-1}\epsilon_2$ and 
$ur(\wt{\alpha})^{-1}\epsilon_2$ respectively, we have $Ne_1 = e_2$, $Ne_2 = 
0$, and the filtration is given by 

\begin{equation*}
\Fil^i(L_\infty\otimes_L D_z)=
\begin{cases}
0 & \text{if}\ i > -(1+\frac{w-k}{2}) \\
L_\infty\cdot(e_1 - \mathscr{L} e_2) & \text{if}\ -\frac{w+k}{2} < i \le 
-(1+\frac{w-k}{2})\\
L_\infty\otimes_L D_z & \text{if}\ i\le -\frac{w+k}{2}
\end{cases}
\end{equation*}
for some $\mathscr{L} \in L$. 

It follows from \cite[Prop.~4.18]{colmeztri} that $V_z|_{G_{F_v}}$ is 
trianguline. Indeed, we have a triangulation \[ 0 \rightarrow  
\mathscr{R}(x^{\frac{w+k}{2}}|x|ur(\wt{\alpha})^{-1}\epsilon_2) \rightarrow 
\mathbf{D}_{\text{rig}}(V_z) \rightarrow 
\mathscr{R}(x^{(1+\frac{w-k}{2})}ur(\wt{\alpha})^{-1}\epsilon_2)\rightarrow 0 \]

which again can be rewritten as

\[  0 \rightarrow  
\mathscr{R}(\delta^{-1}\det \rho (z)) \rightarrow 
\mathbf{D}_{\text{rig}}(V_z) \rightarrow 
\mathscr{R}(\delta(z))\rightarrow 0.\]

The final case we have to consider is when $V_z|_{G_{F_v}}$ is decomposable. 
Then we have $v_p(\alpha) = 0$ (by the noncritical assumption on $z$) and 
\[\mathbf{D}_{\text{rig}}(V_z) = 
\mathscr{R}(x^{\frac{w+k}{2}}ur(\wt{\beta})^{-1}\epsilon_1) \oplus  
\mathscr{R}(x^{(1+\frac{w-k}{2})}ur(\wt{\alpha})^{-1}\epsilon_2)\]

We can summarise our discussion in the following:
\begin{proposition}\label{strictri}
	Suppose $z \in \tilde{\mc{E}}^\mathrm{irr}$ is a noncritical classical 
	point. 
	Then 
	there is a 
	triangulation \[  0 \rightarrow  
	\mathscr{R}(\delta^{-1}\det \rho (z)) \rightarrow 
	\mathbf{D}_{\text{rig}}(V_z) \rightarrow 
	\mathscr{R}(\delta(z))\rightarrow 0\] where we define the continuous 
	character 
	$\delta: F_v^\times \rightarrow 
	\Gamma(\tilde{\mc{E}}^\mathrm{irr},\OO_{\tilde{\mc{E}}^\mathrm{irr}})^\times$
	by $\delta(p) = U_v^{-1}$ 
	and 
	$\delta|_{\OO_{F_v}^\times} = \kappa|_{\{1\} \times \OO_{F_v}^\times}$.	
	Moreover, this triangulation is \emph{strict} in the sense of 
	\cite[Def.~6.3.1]{kpx}
\end{proposition}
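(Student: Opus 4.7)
My plan is to extract the triangulation directly from the explicit descriptions already recorded in the three cases (potentially crystalline indecomposable, non-potentially crystalline, and decomposable), and then to check two things: that the parameter of the quotient line agrees with the specialisation $\delta(z)$ of the globally defined character $\delta$, and that the resulting triangulation is strict in the sense of \cite[Def.~6.3.1]{kpx}. Matching the sub parameter will then be automatic by the multiplicativity of determinants.

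For the parameter identification, by definition $\delta(z)(p)=U_v^{-1}(z)=\alpha^{-1}$, and Proposition~\ref{lgcv} gives $\wt{\alpha}=\alpha p^{1+(w-k)/2}$, so $\delta(z)(p)=p^{1+(w-k)/2}\wt{\alpha}^{-1}$. On $\OO_{F_v}^\times=\Zp^\times$, local algebraicity of $\kappa$ at $z$ yields $\delta(z)(t)=t^{(w-k+2)/2}\epsilon_2(t)$. Under local reciprocity (normalised so that a uniformiser maps to geometric Frobenius), these two formulas together describe the character $x^{1+(w-k)/2}\mathrm{ur}(\wt{\alpha})^{-1}\epsilon_2$ of $W_{F_v}$, which is precisely the quotient parameter appearing in each of the three triangulations recorded above. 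The product of the sub and quotient parameters is in every case $x^{1+w}\mathrm{ur}(\wt{\alpha}\wt{\beta})^{-1}\epsilon_1\epsilon_2$, which recovers $\det\rho(z)$ via reciprocity, confirming that the sub parameter is $\delta(z)^{-1}\det\rho(z)$.

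For strictness, in rank two it suffices to verify that the sub-$(\phi,\Gamma)$-module $\Fil_1\subset\mathbf{D}_{\text{rig}}(V_z)$ is saturated. This is immediate in the decomposable case. In the two indecomposable cases, the triangulation produced by \cite[Prop.~4.13, Prop.~4.18]{colmeztri} from the admissible filtered Weil--Deligne module is saturated by construction: the sub line is chosen so that the Hodge--Tate weight on the quotient is the larger weight $-(1+(w-k)/2)$, and the noncritical hypothesis ensures that this is the refinement corresponding to the $U_v$-eigenvalue $\wt{\alpha}$ rather than to $\wt{\beta}$. The main obstacle, conceptually minor but error-prone, is keeping the various normalisations consistent throughout the case analysis: the $\star$-action versus the standard action of $U_v$, covariant versus contravariant Hodge theory conventions, and the geometric-Frobenius normalisation of local class field theory. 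Once these are fixed, the proof reduces to reading off the parameters from the explicit formulas recorded in the discussion preceding the proposition.
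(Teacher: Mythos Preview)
Your identification of the triangulation and matching of the parameters with $\delta(z)$ is fine and essentially what the paper does (they simply write ``can be rewritten as'' after each of the three cases).

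The gap is in your treatment of strictness. You assert that in rank two strictness reduces to the sub being saturated, but that is not what \cite[Def.~6.3.1]{kpx} says. Saturation (the quotient being a genuine $(\phi,\Gamma)$-module) is already part of the definition of a triangulation; strictness is the further requirement that, writing $D=\mathbf{D}_{\mathrm{rig}}(V_z)\otimes\mathscr{R}((\delta^{-1}\det\rho)(z))^{-1}$, the invariant space $D^{\phi=1,\Gamma=1}$ is exactly one-dimensional over $L$. Equivalently, there is (up to scalar) a \emph{unique} embedding of $\mathscr{R}(\delta^{-1}\det\rho(z))$ into $\mathbf{D}_{\mathrm{rig}}(V_z)$. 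Your saturation check does not address this, and in particular misses the genuinely delicate case.

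That case is when the two characters $ur(\wt\beta)^{-1}\epsilon_1$ and $ur(\wt\alpha)^{-1}\epsilon_2$ appearing in $WD(V_z|_{G_{F_v}})$ coincide (the non-Frobenius-semisimple situation). Here $(\delta^{-1}\det\rho)(z)=x^{k-1}\delta(z)$, and one has to argue that $D^{\phi=1,\Gamma=1}$ is still one-dimensional. The paper does this by invoking \cite[Prop.~2.1]{colmeztri} to reduce to this equality of characters, and then using \cite[Thm.~A]{berger} to see that $D[1/t]^{\Gamma=1}$ is two-dimensional with a non-trivial \emph{unipotent} $\phi$-action, forcing $D[1/t]^{\phi=1,\Gamma=1}$ (and hence $D^{\phi=1,\Gamma=1}$) to be one-dimensional. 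Nothing in your argument touches this; the phrase ``saturated by construction'' and the appeal to the noncritical hypothesis only confirm that you have chosen the correct refinement, not that the embedding realising it is unique up to scalar.
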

\begin{proof}
	Let $D = 
	\mathbf{D}_{\text{rig}}(V_z)\otimes_{\mathscr{R}}\mathscr{R}(\delta^{-1}\det\rho
	)^{-1}(z)$. The only thing remaining to be verified is that the 
	triangulation is 
	strict, which comes down to checking that 
	$D^{\phi = 1, \Gamma = 1}$ is one-dimensional over 
	the 
	coefficient field $L$. Suppose for a contradiction that this space has 
	dimension $2$. Then, by \cite[Prop.~2.1]{colmeztri} we have 
	$(\delta^{-1}\det \rho 
	)(z) = x^i\delta(z)$ for some $i \in \Z_{\ge 0}$. Comparing 
	these characters on the intersection of the kernels of $\epsilon_1$ and 
	$\epsilon_2$ we deduce that $i = k-1$. We have $(\delta^{-1}\det \rho 
	)(z) = x^{k-1}\delta(z)$ if 
	and only if the characters  $ur(\wt{\beta})^{-1}\epsilon_1$, 
	$ur(\wt{\alpha})^{-1}\epsilon_2$ appearing in $D_z$ are equal. It follows 
	from the 
	correspondence between filtered Weil--Deligne modules and potentially 
	semistable $(\phi,\Gamma)$-modules established by \cite[Thm.~A]{berger} 
	that 
	$D[\frac{1}{t}]^{\Gamma = 1}$ is two-dimensional with non-trivial unipotent 
	$\phi$ action, so  $D[\frac{1}{t}]^{\phi = 1, \Gamma = 1}$ is 
	one-dimensional which gives the desired statement.
\end{proof}

We can now apply the results of \cite{kpx} to establish the existence of a 
global triangulation for the family of Galois representations $V$. In the below 
statement we adopt the notation of \cite[6.2.1, 6.2.2]{kpx} so for a rigid 
space $X/\Qp$ and a character $\delta_X : \Qp^\times \rightarrow 
\Gamma(X,\oo_X)^\times$ we have a free rank one $(\phi,\Gamma)$-module 
$\mathscr{R}_X(\delta_X)$ over the sheaf of Robba rings $\mathscr{R}_X$. We 
also have a rank two $(\phi,\Gamma)$-module 
$\mathbf{D}_{\mathrm{rig}}(V|_{G_{F_v}})$ over 
$\mathscr{R}_{\tilde{\mc{E}}^\mathrm{irr}}$.

\begin{corollary}\label{vtrifam}
	\leavevmode
	\begin{enumerate}
		\item $\mathbf{D}_{\mathrm{rig}}(V|_{G_{F_v}})$ is densely 
		pointwise 
		strictly trianguline in 
		the sense of \cite[Def.~6.3.2]{kpx}, with respect to the ordered 
		parameters 
		$\delta^{-1}\det \rho , \delta$ and the Zariski dense subset of 
		noncritical 
		classical 
		points.
		\item There are line bundles $\mathscr{L}_1$ and $\mathscr{L}_2$ over 
		$\tilde{\mc{E}}^\mathrm{irr}$ and an exact sequence \[0 \rightarrow 
		\mathscr{R}_{\tilde{\mc{E}}^\mathrm{irr}}(\delta^{-1}\det \rho 
		)\otimes_{\oo_{\tilde{\mc{E}}^\mathrm{irr}}}\mathscr{L}_1 
		\rightarrow 
		\mathbf{D}_{\mathrm{rig}}(V|_{G_{F_v}}) \rightarrow 
		\mathscr{R}_{\tilde{\mc{E}}^\mathrm{irr}}(\delta)\otimes_{\oo_{\tilde{\mc{E}}^\mathrm{irr}}}\mathscr{L}_2\]
		
		with the 
		cokernel of the final map vanishing over a Zariski open subset which 
		contains 
		every noncritical classical point. In particular, this sequence induces 
		a 
		triangulation of $\mathbf{D}_{\mathrm{rig}}(V_z|_{G_{F_v}})$ at every 
		noncritical classical point.
	\end{enumerate}
\end{corollary}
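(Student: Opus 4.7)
The plan is to deduce both parts from the main global triangulation theorem of \cite{kpx} applied to $\mathbf{D}_{\mathrm{rig}}(V|_{G_{F_v}})$ as a family of $(\phi,\Gamma)$-modules over $\tilde{\mc{E}}^{\mathrm{irr}}$. The essential input is the dense pointwise strict triangulation condition of part (1), which combines Proposition \ref{strictri} with the Zariski density of noncritical classical points; once this is in place, part (2) is a direct application of the global theorem.

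For part (1), I would first verify that the set $Z$ of noncritical classical points is Zariski dense in $\tilde{\mc{E}}^{\mathrm{irr}}$. Here the partial eigenvariety structure is essential. By Proposition \ref{lwx}, over any boundary annulus $\cW_{\eta,\omega}^{>|\varpi_E|}$ of sufficiently small radius, each component of the spectral curve is finite flat over weight space, and the $U_v$-slope $v_p(\lambda_x)$ scales linearly with $v_p(z_{\kappa(x)})$. Following the argument of Proposition \ref{prop:wtkpoints}(2), for each fixed $k \ge 2$ with $k \equiv w \pmod{2}$ one therefore produces classical points of weight $(k,w)$ in any irreducible component of $\tilde{\mc{E}}^{\mathrm{irr}}$ with $U_v$-slope $<k-1$, hence noncritical. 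Letting the conductor of $\epsilon_1/\epsilon_2$ grow yields infinitely many such points accumulating at the boundary of weight space, and since $\tilde{\mc{E}}^{\mathrm{irr}}$ is a one-dimensional rigid analytic curve the accumulation forces $Z$ to be Zariski dense in every irreducible component. At each $z \in Z$, Proposition \ref{strictri} provides a strict triangulation of $\mathbf{D}_{\mathrm{rig}}(V_z|_{G_{F_v}})$ whose two parameters are precisely the specialisations $\delta^{-1}\det\rho(z)$ and $\delta(z)$, which is exactly what \cite[Def.~6.3.2]{kpx} demands.

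For part (2), I would then invoke the global triangulation theorem of \cite[\S 6.3]{kpx}. Given the dense pointwise strict triangulation from part (1), this produces a line bundle $\mathscr{L}_1$ on $\tilde{\mc{E}}^{\mathrm{irr}}$ and an injection $\mathscr{R}_{\tilde{\mc{E}}^{\mathrm{irr}}}(\delta^{-1}\det\rho)\otimes_{\oo_{\tilde{\mc{E}}^{\mathrm{irr}}}}\mathscr{L}_1 \hookrightarrow \mathbf{D}_{\mathrm{rig}}(V|_{G_{F_v}})$, together with a line bundle $\mathscr{L}_2$ and a map to $\mathscr{R}_{\tilde{\mc{E}}^{\mathrm{irr}}}(\delta)\otimes_{\oo_{\tilde{\mc{E}}^{\mathrm{irr}}}}\mathscr{L}_2$ whose cokernel is supported on a proper Zariski closed subset. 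That this bad locus avoids every point of $Z$ follows from the strictness (hence saturatedness) of the pointwise triangulations from Proposition \ref{strictri}, combined with the characterisation in \cite{kpx} of the bad locus as exactly where the global triangulation fails to specialise to a triangulation of the pointwise fibre.

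The main obstacle is the Zariski density step in part (1). Density of all classical points in the eigenvariety is standard, but that alone does not rule out the possibility that the noncritical locus is confined to some proper Zariski closed subset. The slope bounds from Proposition \ref{lwx} are precisely what forestalls this, since they force the $U_v$-slope to tend to zero as one approaches the boundary of weight space along any irreducible component, producing an abundance of classical points of any prescribed algebraic weight $(k,w)$ and slope strictly less than $k-1$.
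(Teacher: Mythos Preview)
Your overall architecture is exactly the paper's: part (1) is Proposition~\ref{strictri} plus density of noncritical classical points, and part (2) is then a direct citation of the global triangulation theorem \cite[Cor.~6.3.10]{kpx}. However, there is a genuine gap in your density argument.

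The density required in \cite[Def.~6.3.2]{kpx} is \emph{not} ordinary Zariski density. What is needed is the existence of an admissible affinoid cover of $\tilde{\mc{E}}^{\mathrm{irr}}$ such that the noncritical classical points are Zariski dense in \emph{each member} of the cover. Your argument via Proposition~\ref{lwx} only manufactures noncritical classical points whose weights lie in a boundary annulus $\cW_{\eta,\omega}^{>|\varpi_E|}$; these points cannot be Zariski dense in an affinoid of $\tilde{\mc{E}}^{\mathrm{irr}}$ lying over, say, a closed disc near the centre of weight space. So while your argument does give ordinary Zariski density in each irreducible component, it does not supply the affinoid-by-affinoid density that \cite{kpx} actually demands, and invoking Proposition~\ref{lwx} here is both unnecessary and insufficient.

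The paper handles this differently and more simply. It observes (citing \cite[Lem.~5.9, Rem.~5.10]{chj}) that $\tilde{\mc{E}}^{\mathrm{irr}}$ admits an increasing cover by affinoids arising from slope data $(U,h)$ over affinoid opens $U\subset\cW$, and in each such piece the locally algebraic weights with $k$ large relative to $h$ are Zariski dense and automatically give noncritical classical points by Proposition~\ref{partialclass}. No input from the halo result is needed. Once this stronger density is in hand, \cite[Cor.~6.3.10]{kpx} gives part (2) directly, including the statement that the cokernel vanishes at every noncritical classical point.
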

\begin{proof}
	The first assertion follows immediately from Proposition \ref{strictri}. 
	The existence of the global triangulation and the fact that the cokernel of 
	the final map vanishes over a Zariski open subset containing every 
	noncritical 
	classical point follows from 
	\cite[Cor.~6.3.10]{kpx}. To apply this Corollary we have to check that 
	there exists an admissible affinoid 
	cover of $\tilde{\mc{E}}^\mathrm{irr}$ such that the noncritical classical 
	points are Zariski dense in each member of the cover --- this property is 
	the definition of `Zariski dense' in \cite[Def.~6.3.2]{kpx}. We prefer to 
	reserve the terminology Zariski dense for the standard property that a set 
	of points is not contained in a proper analytic subset. The (a priori) 
	stronger density statement needed to apply \cite[Cor.~6.3.10]{kpx} follows 
	from \cite[Lem.~5.9]{chj} (it can be shown that 
	$\tilde{\mc{E}}^\mathrm{irr}$ 
	has an increasing cover by affinoids as in \cite[Rem.~5.10]{chj}).
\end{proof}

\subsubsection{$p$-adic Hodge theoretic properties at $v' \ne v$}
Now we let $v'|p$ be a place of $F$ with $v' \ne v$.
\begin{proposition}\label{lgcvprime}
	\leavevmode
	\begin{enumerate}
		\item Let $z \in \tilde{\mc{E}}^\mathrm{irr}$. Then $V_z|_{G_{F_{v'}}}$ 
		is 
		potentially 
		semistable 
		with 
		Hodge--Tate weights $-\frac{w+k_{v'}}{2}$ and $-(1+\frac{w-k_{v'}}{2})$.
		
		\item The restriction to $I_{F_{v'}}$ of the Weil--Deligne 
		representation 
		$WD(V_z|_{G_{F_{v'}}})$ depends only on the connected component of $z$ 
		in 
		$\tilde{\mc{E}}^\mathrm{irr}$.
		
		\item If $z$ is a classical point with $U_{v'}(z) = 
		\alpha \ne 0$, and we set $\wt{\alpha} = \alpha 
		p^{1+\frac{w-k_{v'}}{2}}$ then 
		the associated 
		(Frobenius-semisimplified) Weil--Deligne 
		representation is of the form 
		\[WD(V_z|_{G_{F_{v'}}})^{F-ss} = \left(ur(\wt{\beta})^{-1}\psi_1\oplus 
		ur(\wt{\alpha})^{-1}\psi_2,N\right)\] for some characters $\psi_i$ of 
		$\OO_{F,v'}^\times$ and $\wt{\beta}\in\C_p$ 
		(determined by the determinant of $V_z$) satisfying 
		$v_p(\wt{\alpha})+v_p(\wt{\beta}) = 1+w$. $N$ is non-zero if and only 
		if 
		$\psi_1 = \psi_2$ and $\wt{\alpha}/\wt{\beta} = p^{- 1}$.
	\end{enumerate}
\end{proposition}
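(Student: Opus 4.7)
The plan is to address the three parts in the order (3), (1), (2).

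For (3), the statement is a direct application of the local--global compatibility theorem of Saito, as completed by Skinner \cite{sklgc} and T.~Liu \cite{tliu}, exactly as in the proof of Proposition \ref{lgcv}. The only difference is that we work at the place $v'\neq v$, where the weight component $k_{v'}$ and the level structure $K_{v'}$ are both fixed as part of the datum defining $\mc{E}(k^v,w)$. Writing $\alpha$ for the $\star$-action eigenvalue of $U_{v'}$ at $z$ and $\wt{\alpha}=\alpha p^{1+(w-k_{v'})/2}$ for the corresponding standard-action eigenvalue, the stated shape of $WD(V_z|_{G_{F_{v'}}})^{F\mhyphen ss}$ follows, with the characters $\psi_i$ prescribed by the finite-level part of $\pi_{v'}$. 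The relation $v_p(\wt{\alpha})+v_p(\wt{\beta}) = 1+w$ reflects weak admissibility together with the fact, recorded earlier in Section~\ref{ssec:galrep}, that $\det\rho$ is locally constant on $\tilde{\mc{E}}^\mathrm{irr}$.

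For (1), we would proceed in two steps. First, by the family version of Sen's theorem (as reformulated for $(\varphi,\Gamma)$-modules in \cite{kpx}), the Hodge--Tate--Sen weights of $V|_{G_{F_{v'}}}$ are roots of a characteristic polynomial with analytic coefficients on $\tilde{\mc{E}}^\mathrm{irr}$. By (3), at the Zariski dense set of classical points these weights are constantly $-\tfrac{w+k_{v'}}{2}$ and $-(1+\tfrac{w-k_{v'}}{2})$, so the same holds everywhere. Second, we invoke the theorem of Berger--Colmez (with a $(\varphi,\Gamma)$-module proof available via \cite{kpx}) that the locus where the family is potentially semistable with prescribed Hodge--Tate weights is Zariski closed in $\tilde{\mc{E}}^\mathrm{irr}$; since it contains the Zariski dense set of classical points, it must be the whole space.

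Part (2) then follows from the local constancy of the inertial type in families of potentially semistable Galois representations. Once (1) is in hand, the restriction to $I_{F_{v'}}$ of the Weil--Deligne representation attached to $V_z|_{G_{F_{v'}}}$ assembles into a locally constant function on $\tilde{\mc{E}}^\mathrm{irr}$ (the argument exploits that the set of isomorphism classes of inertial types of a given rank is discrete, so any analytically varying such assignment is locally constant), hence is constant on connected components. The main obstacle we anticipate is the careful invocation of the family-version results at $v'\neq v$ in the potentially semistable rather than merely semistable setting, since the characters $\psi_i$ need not be unramified; everything else is a straightforward combination of Sen theory, \cite{kpx}, and classical local--global compatibility.
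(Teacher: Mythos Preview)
Your approach is essentially the paper's: local--global compatibility at classical points for parts (1) and (3), then Berger--Colmez theory in families to extend (1) and (2) to all of $\tilde{\mc{E}}^\mathrm{irr}$ (the paper cites \cite[Thm.~B, Thm.~C]{beco}, generalised to reduced quasicompact rigid spaces in \cite[Cor.~3.19]{cheapp}, together with \cite[Lem.~7.5.12]{bc} for the precise Hodge--Tate weights, rather than routing through \cite{kpx}). One small technical point the paper adds for part (3) that you omit: one first reduces, via an argument as in Lemma~\ref{lem:fslevelindep}, to the case where the level integer $n$ at $v'$ is minimal with $\pi_{v'}^{I'_{v',1,n-1}} \ne 0$, so that the $U_{v'}$-eigenvalue genuinely matches the stated Frobenius eigenvalue in $WD(V_z|_{G_{F_{v'}}})^{F\text{-}ss}$.
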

\begin{proof}
	For classical points, the first and third assertions follow from 
	local--global compatibility, as in Proposition \ref{lgcv}. Note that for 
	the third part, a calculation similar to Lemma \ref{lem:fslevelindep} shows 
	that we can assume that the integer $n$ defining the level at the place 
	$v'$ is minimal such that the relevant space of invariants 
	$\pi_{v'}^{I'_{v',1,n-1}}$ is non-zero. The rest of the 
	Proposition then follows from \cite[Thm.~B, Thm.~C]{beco}, which were 
	generalised to reduced quasicompact (and quasiseparated) rigid spaces in 
	\cite[Cor.~3.19]{cheapp}, and \cite[Lem.~7.5.12]{bc} (which allows us to 
	identify the Hodge--Tate weights precisely). Note that every point 
	of $\tilde{\mc{E}}^\mathrm{irr}$ is 
	contained in a quasicompact open subspace with a Zariski dense set of 
	classical points (as in the proof of Corollary \ref{vtrifam}, see 
	\cite[Lem.~5.9, Rem.~5.10]{chj}).
\end{proof}
\begin{definition}\label{vprimefs}
	We denote by $\tilde{\mc{E}}^{v'-\mathrm{fs}}$ the Zariski open subspace of 
	$\tilde{\mc{E}}^{\mathrm{irr}}$ where $U_{v'}$ is non-zero.
\end{definition}
\begin{lemma}\label{wdrepvprime}
	\leavevmode
	\begin{enumerate}
		\item If $z \in \tilde{\mc{E}}^{v'-\mathrm{fs}}$, with $U_{v'}(z) = 
		\alpha \ne 0$, and we set $\wt{\alpha} = \alpha 
		p^{1+\frac{w-k_{v'}}{2}}$ then 
		the associated 
		(Frobenius-semisimple) Weil--Deligne 
		representation is of the form 
		\[WD(V_z|_{G_{F_{v'}}})^{F-ss} = \left(ur(\wt{\beta})^{-1}\psi_1\oplus 
		ur(\wt{\alpha})^{-1}\psi_2,N\right)\] for some characters $\psi_i$ of 
		$\OO_{F,v'}^\times$ and $\wt{\beta}\in\C_p$ 
		(determined by the determinant of $V_z$) satisfying 
		$v_p(\wt{\alpha})+v_p(\wt{\beta}) = 1+w$.
		
		\item $\tilde{\mc{E}}^{v'-\mathrm{fs}}$ is a union of connected 
		components of 
		$\tilde{\mc{E}}^{\mathrm{irr}}$.
	\end{enumerate}
\end{lemma}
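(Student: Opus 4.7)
The plan is to prove both parts by working on a fixed connected component $C$ of $\tilde{\mc{E}}^{\mathrm{irr}}$ meeting $\tilde{\mc{E}}^{v'-\mathrm{fs}}$. The key preliminary observations are: (a) by Proposition \ref{lgcvprime}(2), the restriction of $WD(V_z|_{G_{F_{v'}}})^{F-ss}$ to $I_{F_{v'}}$ is a fixed unordered pair of characters $\psi_1,\psi_2$ of $\OO_{F,v'}^\times$ for all $z \in C$, which one reads off from any classical point of $C$ via Proposition \ref{lgcvprime}(3); and (b) as noted after Proposition \ref{liftdet}, $\det\rho$ is constant on connected components of $\tilde{\mc{E}}^{\mathrm{irr}}$. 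Combined with (a) and the explicit Hodge--Tate weights from Proposition \ref{lgcvprime}(1), this forces the product $\wt{\alpha}\wt{\beta}$ of the Frobenius eigenvalues on $WD^{F-ss}$ to be a fixed nonzero constant $D$ on $C$, and yields the valuation relation $v_p(\wt{\alpha}) + v_p(\wt{\beta}) = 1+w$.

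For part (1), I would next interpolate the trace of a chosen lift $\Frob_{v'} \in W_{F_{v'}}$ acting on $WD^{F-ss}$ as a rigid analytic function over $C$, so that, together with (a) and the known determinant, the characteristic polynomial of this Frobenius extends to a rigid analytic polynomial over $C$. One natural way to set this up is via \cite[Cor.~3.19]{cheapp}, which provides the interpolation of the $I_{F_{v'}}$-invariant part of the family over a connected reduced quasicompact rigid base; alternatively one could work directly with the family of $(\varphi,\Gamma)$-modules $\mathbf{D}_{\mathrm{rig}}(V|_{G_{F_{v'}}})$ from \cite{kpx}. At classical points of $C$, Proposition \ref{lgcvprime}(3) identifies one root of the resulting polynomial with $U_{v'}(z)\,p^{1+(w-k_{v'})/2}$. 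Zariski density of classical points in $\tilde{\mc{E}}^{\mathrm{irr}}$, hence in the Zariski open $C \cap \tilde{\mc{E}}^{v'-\mathrm{fs}}$, then extends this identification to every $z \in C \cap \tilde{\mc{E}}^{v'-\mathrm{fs}}$, giving the stated form of $WD^{F-ss}$. I expect the careful setup of this rigid analytic interpolation — matching a global object on the partial eigenvariety with the pointwise Weil--Deligne data — to be the main technical step.

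Part (2) then follows by a topological argument using part (1). Since $\tilde{\mc{E}}^{v'-\mathrm{fs}}$ is the non-vanishing locus of the rigid analytic function $U_{v'}$, it is already Zariski open in $\tilde{\mc{E}}^{\mathrm{irr}}$, so it suffices to show it is also closed. Suppose otherwise: then some component $C$ contains a point $z_0$ with $U_{v'}(z_0) = 0$ as well as a point where $U_{v'} \neq 0$. Because $C$ is one-dimensional and normal, the zero locus of $U_{v'}$ on $C$ is discrete, so one can choose an analytic sequence $z_n \to z_0$ with $z_n \in C \cap \tilde{\mc{E}}^{v'-\mathrm{fs}}$. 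By part (1), $\wt{\alpha}(z_n) = U_{v'}(z_n)\,p^{1+(w-k_{v'})/2} \to 0$, whence $v_p(\wt{\beta}(z_n)) = v_p(D) - v_p(\wt{\alpha}(z_n)) \to -\infty$. This contradicts the lower bound on $v_p(\wt{\beta}(z_n))$ provided by weak admissibility applied to the potentially semistable representations $V_{z_n}|_{G_{F_{v'}}}$, whose Hodge--Tate weights are fixed by Proposition \ref{lgcvprime}(1). Hence $U_{v'}$ is nowhere zero on $C$, proving the claim.
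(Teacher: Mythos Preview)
Your proposal is correct, and for part (1) it is essentially the paper's argument: interpolate the characteristic polynomial of a Frobenius lift analytically over $\tilde{\mc{E}}^{\mathrm{irr}}$ (the paper cites \cite[Thm.~C]{beco} for this), then use Zariski density of classical points together with Proposition \ref{lgcvprime}(3) to identify $p^{1+(w-k_{v'})/2}U_{v'}$ as one of its roots on $\tilde{\mc{E}}^{v'-\mathrm{fs}}$.

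For part (2) your argument works but is slightly less direct than the paper's. The paper simply observes that the algebraic identity ``$p^{1+(w-k_{v'})/2}U_{v'}$ is a root of the interpolated characteristic polynomial'' extends, by Zariski density, to any point $z$ in the closure of $\tilde{\mc{E}}^{v'-\mathrm{fs}}$; since the constant term of that polynomial is (up to sign) the fixed nonzero determinant, $0$ cannot be a root, and hence $U_{v'}(z)\neq 0$. This avoids your sequence argument and the appeal to weak admissibility bounds on $v_p(\wt{\beta})$: the nonvanishing of Frobenius eigenvalues is already encoded in the analytically varying characteristic polynomial. Your route via weak admissibility is perfectly valid (the Hodge--Tate weights are fixed, so the Newton polygon, hence $v_p(\wt{\beta})$, is uniformly bounded), but it is doing more work than necessary given the machinery you already set up in part (1).
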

\begin{proof}
	We begin by establishing the first part. Suppose $z \in 
	\tilde{\mc{E}}^{v'-\mathrm{fs}}$. It follows from the second and 
	third parts of Proposition \ref{lgcvprime} that the Weil--Deligne 
	representation at $z$ has a reducible Weil group representation.  Since the 
	Weil--Deligne representation varies analytically over 
	$\tilde{\mc{E}}^{\mathrm{irr}}$ \cite[Thm.~C]{beco}, we 
	can 
	consider the characteristic polynomial of a lift of $\Frob_{v'}$ over 
	$\tilde{\mc{E}}^{\mathrm{irr}}$. The third part of Proposition 
	\ref{lgcvprime} and Zariski 
	density of classical points implies that 
	$p^{1+\frac{w-k_{v'}}{2}}U_{v'}$ is a root of this polynomial over 
	$\tilde{\mc{E}}^{v'-\mathrm{fs}}$, and this proves that the Weil--Deligne 
	representation has the desired form over all of 
	$\tilde{\mc{E}}^{v'-\mathrm{fs}}$.
	
	For the second part, we suppose that $z \in \tilde{\mc{E}}^{\mathrm{irr}}$ 
	is 
	an element of 
	the 
	Zariski closure of $\tilde{\mc{E}}^{v'-\mathrm{fs}}$. We need to show that 
	$U_{v'}(z) \ne 0$. The same argument we used to establish the first part 
	shows that the Weil--Deligne 
	representation at $z$ has a reducible Weil group representation with a
	$\Frob_{v'}$ eigenvalue given by $p^{1+\frac{w-k_{v'}}{2}}U_{v'}(z)$. It 
	follows immediately that $U_{v'}(z) \ne 0$.		
\end{proof}

\begin{lemma}\label{lem:redlocus}
	The locus of points $z \in \tilde{\mc{E}}^{v'-\mathrm{fs}}$ with 
	$V_z|_{G_{F_{v'}}}$ reducible is equal to the locus of points with 
	$v_p(U_{v'}(z)) = 0$ or $k_{v'}-1$.
	
	Moreover, this locus is a union of connected components of 
	$\tilde{\mc{E}}^{v'-\mathrm{fs}}$.
\end{lemma}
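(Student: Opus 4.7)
The plan is to combine the description of the Weil--Deligne representation at $v'$ from Lemma \ref{wdrepvprime} and Proposition \ref{lgcvprime} with a weak admissibility analysis of the associated filtered $(\phi,N,\mathrm{Gal})$-module, and to exploit an elementary ultrametric property of non-vanishing analytic functions for the connected components statement. Setting $\tilde\alpha(z) := U_{v'}(z)\, p^{1+(w-k_{v'})/2}$, the Frobenius eigenvalues of the Weil--Deligne representation of $V_z|_{G_{F_{v'}}}$ have valuations $v_p(\tilde\alpha(z))$ and $v_p(\tilde\beta(z)) = 1+w-v_p(\tilde\alpha(z))$, while the Hodge--Tate weights are $-h_1, -h_2$ with $h_1 = 1+(w-k_{v'})/2$ and $h_2 = (w+k_{v'})/2$, so that $h_2 - h_1 = k_{v'}-1$. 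Weak admissibility forces $v_p(\tilde\alpha(z)), v_p(\tilde\beta(z)) \in [h_1,h_2]$, and the core claim to prove is that $V_z|_{G_{F_{v'}}}$ is reducible as a $G_{F_{v'}}$-representation if and only if the Newton polygon of $\phi$ on $D_{\mathrm{pst}}(V_z|_{G_{F_{v'}}})$ coincides with the Hodge polygon, equivalently $\{v_p(\tilde\alpha),v_p(\tilde\beta)\} = \{h_1,h_2\}$. Via the definition of $\tilde\alpha$, this is exactly the condition $v_p(U_{v'}(z)) \in \{0, k_{v'}-1\}$.

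Establishing this reducibility criterion is a case analysis paralleling Section \ref{ssec:galrep}. When $V_z|_{G_{F_{v'}}}$ is potentially crystalline with $\phi$ semisimple and distinct eigenvalues, a $\phi$-eigenline is a weakly admissible subobject exactly when its eigenvalue valuation matches its induced Hodge--Tate weight, and weak admissibility of the whole module prevents the Hodge line from coinciding with the eigenline whose valuation would give a forbidden configuration, so a weakly admissible submodule is produced whenever Newton equals Hodge. When $\phi$ is non-semisimple with $N = 0$, one has $\tilde\alpha = \tilde\beta$ with valuation $(1+w)/2$, which lies in $\{h_1,h_2\}$ only if $k_{v'} = 1$, excluded. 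The delicate case is $N \neq 0$: then $\tilde\alpha/\tilde\beta = p^{-1}$ forces $v_p(\tilde\alpha) = w/2$ and $v_p(\tilde\beta) = (w+2)/2$, so Newton equals Hodge precisely when $k_{v'} = 2$. In that case $\ker N$ is weakly admissible (its induced $t_H$ equals its $t_N = w/2 = h_1$), while for $k_{v'} > 2$ the only $\phi, N$-stable line $\ker N$ fails weak admissibility ($t_H = h_1 \neq w/2 = t_N$), so $V_z|_{G_{F_{v'}}}$ remains irreducible despite becoming reducible after restriction to an appropriate finite abelian extension of $F_{v'}$.

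For the union-of-connected-components statement, I would observe that $U_{v'}$ is by definition a non-vanishing analytic function on $\tilde{\mc{E}}^{v'-\mathrm{fs}}$. The ultrametric inequality implies that for any $z_0$, the open neighborhood $\{z : |U_{v'}(z) - U_{v'}(z_0)| < |U_{v'}(z_0)|\}$ of $z_0$ is contained in the set where $|U_{v'}|$ takes the constant value $|U_{v'}(z_0)|$, so $|U_{v'}|$ is locally constant on $\tilde{\mc{E}}^{v'-\mathrm{fs}}$. Consequently every level set of $|U_{v'}|$ is clopen, and in particular the locus $\{v_p(U_{v'}(z)) \in \{0, k_{v'}-1\}\}$ is a union of connected components. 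The main obstacle will be the $N \neq 0$ step of the reducibility analysis, where one must distinguish carefully between Galois-reducibility (requiring a weakly admissible submodule of the filtered $(\phi,N,\mathrm{Gal})$-module) and mere $(\phi,N)$-reducibility (which $\ker N$ always provides), and verify that the induced filtration on $\ker N$ is compatible with its $\phi$-eigenvalue only in the borderline case $k_{v'} = 2$.
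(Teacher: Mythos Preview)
Your first part---the reducibility criterion via weak admissibility---is correct and simply unpacks the reference to \cite[\S4.5]{colmeztri} that the paper cites. Your case analysis (potentially crystalline semisimple, non-semisimple $\phi$, and $N\neq 0$) is accurate, including the observation that the $N\neq 0$ case is reducible precisely when $k_{v'}=2$.

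Your second part, however, has a genuine gap. The claim that the absolute value of a nonvanishing analytic function is locally constant in a sense that forces level sets to be clopen is \emph{false} in rigid geometry. The standard counterexample is the coordinate function $x$ on a closed annulus $\{1/p \le |x| \le 1\}$: for any point $z_0$ the admissible open $\{|x-z_0|<|z_0|\}$ is contained in $\{|x|=|z_0|\}$, so $|x|$ is ``locally constant'' in your sense; yet the annulus is connected and $|x|$ is not constant on it. The problem is that in the Grothendieck topology on a rigid space, having an admissible open neighbourhood of each point on which a function is constant does \emph{not} mean these neighbourhoods form an admissible covering, so the usual ``locally constant implies constant on connected components'' argument breaks down.

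The paper's argument is different and avoids this trap: reducibility of a two-dimensional family of Galois representations is a Zariski closed condition, while the slope condition $v_p(U_{v'})\in\{0,k_{v'}-1\}$ defines an admissible open (here one uses the weak admissibility bound $0\le v_p(U_{v'})\le k_{v'}-1$ from your first part, so that the condition becomes $\{|U_{v'}|\ge 1\}\cup\{|U_{v'}^{-1}|\ge p^{k_{v'}-1}\}$, a union of Laurent domains since $U_{v'}$ is a unit on $\tilde{\mc{E}}^{v'-\mathrm{fs}}$). A subset that is simultaneously Zariski closed and admissible open is a union of connected components. You can also repair your approach directly: using the same slope bounds, the complementary locus $\{0<v_p(U_{v'})<k_{v'}-1\}$ is likewise admissible open, so both the set and its complement are open, hence clopen.
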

\begin{proof}
	It follows from Lemma \ref{wdrepvprime} and the explicit description of 
	admissible filtered Weil--Deligne modules 
	\cite[\S4.5]{colmeztri} that $V_z|_{G_{F_{v'}}}$ is reducible if and only 
	if $v_p(U_{v'}(z)) = 0$ or $k_{v'}-1$. Since reducibility of a 
	representation is a Zariski closed condition, and the conditions on the 
	slope are open, we deduce that the reducible locus is a union of connected 
	components.
\end{proof}
\begin{definition}We denote the locus of $z \in 
	\tilde{\mc{E}}^{v'-\mathrm{fs}}$ 
	with $V_z|_{G_{F_{v'}}}$ irreducible or reducible with $v_p(U_{v'}(z)) = 0$ 
	by 
	$\tilde{\mc{E}}^{v'-\mathrm{fs},\mathrm{good}}$ (the preceding lemma 
	implies 
	that 
	this is a union of connected components of 
	$\tilde{\mc{E}}^{v'-\mathrm{fs}}$).
\end{definition}

\begin{corollary}\label{trivprime}
	There are line bundles $\mathscr{L}_1$ and $\mathscr{L}_2$ over 
	$\tilde{\mc{E}}^{v'-\mathrm{fs},\mathrm{good}}$, a continuous character 
	$\delta: 
	F_{v'}^\times \rightarrow 
	\Gamma(\tilde{\mc{E}}^{v'-\mathrm{fs},\mathrm{good}},\OO_{\tilde{\mc{E}}})^\times$
	with $\delta(p) = U_{v'}^{-1}$ 
	and a short exact sequence \[0 
	\rightarrow 
	\mathscr{R}_{\tilde{\mc{E}}^{v'-\mathrm{fs},\mathrm{good}}}(\delta^{-1}\det 
	\rho 
	)\otimes\mathscr{L}_1 \rightarrow 
	\mathbf{D}_{\mathrm{rig}}(V_{\tilde{\mc{E}}^{v'-\mathrm{fs},\mathrm{good}}}|_{G_{F_{v^{\prime}}}})
	\rightarrow 
	\mathscr{R}_{\tilde{\mc{E}}^{v'-\mathrm{fs},\mathrm{good}}}(\delta)\otimes\mathscr{L}_2
	\rightarrow 0.\]
\end{corollary}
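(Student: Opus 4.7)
The plan mirrors that of Corollary \ref{vtrifam}, carried out at the place $v'$ and restricted to the open subspace $\tilde{\mc{E}}^{v'-\mathrm{fs},\mathrm{good}}$ whose very purpose is to exclude the situations that would spoil strictness and cokernel-vanishing at $v'$.

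First I would establish, at every classical point $z \in \tilde{\mc{E}}^{v'-\mathrm{fs},\mathrm{good}}$, a strict noncritical triangulation of $\mathbf{D}_{\mathrm{rig}}(V_z|_{G_{F_{v'}}})$ with sub parameter $\delta^{-1}\det\rho(z)$ and quotient parameter $\delta(z)$, where $\delta(p)=U_{v'}^{-1}(z)$ and $\delta|_{\OO_{F_{v'}}^\times}$ is the algebraic-plus-finite-order character determined by the fixed weight $(k_{v'},w)$ and the character $\psi_2$ appearing in Lemma \ref{wdrepvprime}. The construction is the direct analogue of the local computations used in Proposition \ref{strictri}: Proposition \ref{lgcvprime} supplies the Hodge--Tate weights, Lemma \ref{wdrepvprime} supplies the Frobenius-semisimple Weil--Deligne representation, and the recipes of \cite[\S 4.5, \S 4.6]{colmeztri} then read off the triangulation explicitly in both the irreducible and the ordinary-reducible case. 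Strictness is verified by the same argument as in Proposition \ref{strictri}, using \cite[Prop.~2.1]{colmeztri} together with \cite[Thm.~A]{berger}; it can fail only when the parameter ratio equals $x^{k_{v'}-1}$, which forces $v_p(U_{v'}(z))=k_{v'}-1$ together with $V_z|_{G_{F_{v'}}}$ decomposable --- a situation excluded by our definition of the good locus.

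Next I would globalise these pointwise triangulations via \cite[Cor.~6.3.10]{kpx}. The Zariski density hypothesis of \cite[Def.~6.3.2]{kpx} is satisfied: by Lemma \ref{lem:redlocus} the good locus is a union of connected components of $\tilde{\mc{E}}^{v'-\mathrm{fs}}$, Proposition \ref{prop:wtkpoints} produces classical points in every irreducible component, and the upgrade to density in each member of an admissible affinoid cover proceeds exactly as at the end of the proof of Corollary \ref{vtrifam} using the increasing cover supplied by \cite[Lem.~5.9, Rem.~5.10]{chj}. Applying \cite[Cor.~6.3.10]{kpx} then produces line bundles $\mathscr{L}_1, \mathscr{L}_2$ on $\tilde{\mc{E}}^{v'-\mathrm{fs},\mathrm{good}}$ and a two-term complex of the required shape whose cokernel is a coherent sheaf vanishing on a Zariski open subset containing every classical point.

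The main obstacle, and final step, is to upgrade this to everywhere-vanishing of the cokernel on $\tilde{\mc{E}}^{v'-\mathrm{fs},\mathrm{good}}$, so as to obtain a genuine short exact sequence. The support of the cokernel is precisely the locus at which the two ordered parameters $\delta^{-1}\det\rho, \delta$ would swap in a nontrivial pointwise triangulation; as in the strictness analysis, such a swap forces the specialisation to be critical-reducible at $v'$, and hence by Lemma \ref{lem:redlocus} (together with the explicit shape of $WD(V_z|_{G_{F_{v'}}})$ supplied by Lemma \ref{wdrepvprime}) to lie in a connected component of $\tilde{\mc{E}}^{v'-\mathrm{fs}}$ disjoint from the good locus. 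Thus the cokernel has empty support on $\tilde{\mc{E}}^{v'-\mathrm{fs},\mathrm{good}}$, and the required short exact sequence follows.
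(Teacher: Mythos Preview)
Your plan matches the paper's one-line proof (``This is proved exactly as Corollary \ref{vtrifam}'') and you correctly flag the one thing that actually needs saying: unlike Corollary \ref{vtrifam}, the statement here asserts a genuine \emph{short exact sequence}, so the cokernel produced by \cite[Cor.~6.3.10]{kpx} must vanish on all of $\tilde{\mc{E}}^{v'-\mathrm{fs},\mathrm{good}}$, not merely on a Zariski open.

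Your step 3 justification of this is slightly off: the assertion that the cokernel support is ``precisely the locus at which the two ordered parameters would swap'' is not what \cite{kpx} provides, and the deduction from there is informal. The cleaner route --- and almost certainly what the paper intends --- is to strengthen your step 1 from classical points to \emph{all} points. This is available because Proposition \ref{lgcvprime}(1) and Lemma \ref{wdrepvprime}(1) are stated for arbitrary $z \in \tilde{\mc{E}}^{v'-\mathrm{fs}}$: the weight at $v'$ is fixed, so every point is potentially semistable with the prescribed Hodge--Tate weights and Weil--Deligne shape, and on the good locus every point is noncritical at $v'$. Hence the filtered Weil--Deligne computations of \cite[\S4.5, \S4.6]{colmeztri} and the strictness check of Proposition \ref{strictri} go through at every point of $\tilde{\mc{E}}^{v'-\mathrm{fs},\mathrm{good}}$. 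Taking the Zariski-dense set in \cite[Def.~6.3.2]{kpx} to be the whole space then forces the Zariski open on which the cokernel vanishes to be everything, and your step 3 becomes vacuous.
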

\begin{proof}
	This is proved exactly as Corollary \ref{vtrifam}. 
\end{proof}
Finally, we note that if $\delta: \Qp^\times \to L^\times$ is a continuous 
character such that there exists  $i \in \Z$ with  $\delta(x) = x^i$ for all 
$x$ in a finite index subgroup of $\Zp^\times$, then $\mathscr{R}(\delta)$ is 
de Rham of Hodge--Tate weight $-i$ (as defined in \cite[\S2.2]{px}). We can now 
give the proof of Theorem \ref{parityapp}.

\begin{proof}[Proof of Theorem \ref{parityapp}]
	If $[F:\Q]$ is even, we let $D/F$ be the totally 
	definite quaternion algebra which is split at all finite places. If 
	$[F:\Q]$ is odd, we let $D/F$ be the totally definite quaternion algebra 
	which is split at all finite places except for the fixed place $v_0$.
	
	By the Jacquet--Langlands correspondence, we can find a compact open 
	subgroup $K^p \subset (D\otimes_F\A_{F,f}^p)^\times$, an integer $n \ge 
	1$, a finite order character $\epsilon = 
	\prod_{v|p}\epsilon_v: \prod_{v|p}T_{0,v} \rightarrow E_\lambda^\times$ and 
	a Hecke 
	eigenform $f \in 
	H^0(K^p\prod_{v|p}I_{v,1,n-1},\bigotimes_{v|p}\mc{L}(k_v,0,\epsilon_v))$ 
	with associated Galois representation
	$V_{f}$ isomorphic to $V_{g,\lambda}$. Moreover, if $g$ is $v$-(nearly) 
	ordinary at a 
	place $v|p$ we 
	choose $f$ so that its $U_v$-eigenvalue $\alpha_v$ satisfies 
	$v_p(\alpha_v) 
	= 0$. In particular, we have $v_p(\alpha_v) < k_v-1$ for all $v|p$. 
	
	Triviality of the central character of $g$ implies that each character 
	$\epsilon_v$ has trivial restriction to the diagonally embedded copy of 
	$\oo_v^\times$. The determinant $\det(V_f)$ is the cyclotomic character.
	
	Now we choose an ordering $v_1, v_2, \ldots, v_d$ of the places $v|p$. We 
	can now apply the results of sections \ref{ssec:halo} and \ref{ssec:galrep} 
	fixing the place $v = v_1$. By 
	Proposition \ref{prop:wtkpoints}, we deduce that we can find an integer 
	$n_1 \ge 
	n$, a finite extension $E_1/\Qp$, a finite order character $\epsilon_1 = 
	\prod_{v|p}\epsilon_{1,v}: \prod_{v|p}T_{0,v} \rightarrow E_1^\times$ and a 
	Hecke 
	eigenform $f_1 \in 
	H^0(K^p\prod_{v|p}I_{v,1,n_1-1},\mc{L}(2,0,\epsilon_{1,v_1})\bigotimes_{i=2}^d\mc{L}(k_{v_i},0,\epsilon_{1,v_i}))$
	 such that $f$ and $f_1$ give rise to classical points of a common 
	irreducible component $C$ of $\mc{E}(k^{v_1},0)$. Moreover, the 
	$U_v$-eigenvalues $\alpha_v$ of $f_1$ satisfy $v(\alpha_v) < k_v - 1$ for 
	all $v|p$. For $v_1$ this is part of the statement of Proposition 
	\ref{prop:wtkpoints}. For the other places, this follows from Lemma 
	\ref{lem:redlocus}. 
	
	Now we can apply Corollaries \ref{trivprime} and \ref{vtrifam}, together 
	with \cite[Thm.~A]{px} to deduce that the validity of the parity 
	conjecture 
	for $V_{g,\lambda}$ is equivalent to the validity of the parity conjecture 
	for 
	$V_{f_1}$. For the reader's convenience, we reproduce here the statement of  \cite[Thm.~A]{px}:
	
	\begin{theor}
		\label{T:thm A}
		Let $X$ be an irreducible reduced rigid analytic space over $\Qp$, and
		$\mathscr{T}$ a locally free coherent $\OO_X$-module equipped with a
		continuous, $\OO_X$-linear action of $G_{F,S}$, and a skew-symmetric
		isomorphism $j : \mathscr{T} \stackrel\sim\to \mathscr{T}^*(1)$.  Assume given,
		for each $v_i|p$, a short exact sequence
		\[
		\mathscr{S}_i : 0 \to \mathscr{D}_i^+ \to \mathbf{D}_\mathrm{rig}(\mathscr{T}|_{G_{F_{v_i}}}) \to
		\mathscr{D}_i^- \to 0
		\]
		of $(\varphi,\Gamma)$-modules over 
		$\mathscr{R}_{X}$, with
		$\mathscr{D}_i^+$ Lagrangian with respect to $j$.
		
		For a (closed) point $P \in X$, we put $V = \mathscr{T} \otimes_{\OO_X}
		\kappa(P)$ and $S_i = \mathscr{S}_i \otimes_{\OO_X} \kappa(P)$.  Let
		$X_\mathrm{alg}$ be the set of points $P \in X$ such that (1) for all 
		$i$, the sequence $S_i$ describes 
		$\mathbf{D}_\mathrm{rig}(V|_{G_{F_{v_i}}})$ as an extension of a de 
		Rham $(\varphi,\Gamma)$-module with non-negative Hodge--Tate weights by 
		a de Rham $(\varphi,\Gamma)$-module with negative Hodge--Tate 
		weghts\footnote{This is called the `Panchiskin condition' in 
		\cite{px}.}, 
		and (2) for all $w \in S \backslash
		S_\infty$ where $\mathscr{T}|_{G_{F_w}}$ is ramified, the associated
		Weil--Deligne representation $WD(V|_{G_{F_w}})$ is pure.
		
		Then the validity of the parity conjecture for $V$, relating the sign
		of its global $\varepsilon$-factor to the parity of the dimension of its
		Bloch--Kato Selmer group, is independent of $P \in X_\mathrm{alg}$.
	\end{theor}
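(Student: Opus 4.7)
The plan is to show that the quantity $\dim_{\kappa(P)} H^1_f(F,V) - \mathrm{ord}_{s=0}L(V,s) \pmod{2}$ arises as a locally constant $\Z/2$-valued function on $X_\mathrm{alg}$ (in fact extending to a locally constant function on all of $X$), which must then be constant because $X$ is irreducible and $X_\mathrm{alg}$ meets every connected component (by assumption, since otherwise the conclusion is vacuous). The strategy proceeds in three movements: (i) define a family Selmer complex using the local data $\{\mathscr{D}_i^+\}$, (ii) use the pairing $j$ together with Nekov\'a\v{r}--Pottharst duality to express parity of $H^1_f$ at algebraic points in terms of local invariants, and (iii) show that the corresponding local invariants and the global $\varepsilon$-factor vary locally constantly in the family.

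For step (i), one forms Pottharst's Selmer complex $\widetilde{R\Gamma}_f(G_{F,S},\mathscr{T};\{\mathscr{D}_i^+\})$ using the $\mathscr{D}_i^+$ as local conditions at the places $v_i \mid p$ and unramified (or ``strict'') local conditions at the places $w \in S \setminus S_\infty$ not dividing $p$. One should check that this Selmer complex is a perfect complex of $\OO_X$-modules in a Zariski neighborhood of each algebraic point (using the finiteness results for $(\varphi,\Gamma)$-module cohomology from Kedlaya--Pottharst--Xiao), so that its Euler--Poincar\'e characteristic is well-defined and locally constant in $X$. The skew-symmetric isomorphism $j$, together with Tate local duality for $(\varphi,\Gamma)$-modules and the Lagrangian hypothesis on $\mathscr{D}_i^+$, equips $\widetilde{R\Gamma}_f$ with a perfect self-duality of degree $3$ (cf. Nekov\'a\v{r}'s framework in \cite{nekselmer}); this is the central technical input.

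For step (ii), at a point $P \in X_\mathrm{alg}$ one compares $\widetilde{R\Gamma}_f \otimes^L \kappa(P)$ with the Bloch--Kato Selmer complex of $V$. The Panchishkin hypothesis at $p$ ensures that the triangulation $\mathscr{S}_i$ specialises to a de Rham extension of the desired Hodge--Tate sign, and a standard computation (via Berger's equivalence between de Rham $(\varphi,\Gamma)$-modules and filtered $(\varphi,N)$-modules) identifies the local condition $\mathscr{D}_i^+$ at $v_i$ with the Bloch--Kato $H^1_f$ local condition. At finite places $w \nmid p$ the purity assumption on $WD(V|_{G_{F_w}})$ ensures that the unramified local condition coincides with $H^1_f$ at $w$. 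Combining with self-duality, one obtains an identity of the form $\dim H^1_f(F,V) \equiv \chi(\widetilde{R\Gamma}_f) + \sum_{v_i|p} \operatorname{corank}\mathscr{D}_i^- + c \pmod{2}$ for an explicit local correction $c$ depending only on Hodge--Tate weights and ranks.

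For step (iii) and the main obstacle: one must match this parity against the sign of the global $\varepsilon$-factor $\varepsilon(V) = \prod_w \varepsilon(WD(V|_{G_{F_w}}))$. Local $\varepsilon$-factors at $w \nmid p$ are determined by the Frobenius-semisimplified Weil--Deligne representation, which is constant on connected components of $X$ by the results of Bella\"{\i}che--Chenevier (see Proposition \ref{lgcvprime}), so these factors contribute a locally constant sign. The genuinely delicate point is controlling $\varepsilon$-factors at $v_i \mid p$: here one uses the $p$-adic local $\varepsilon$-factor attached by Nakamura to the triangulated $(\varphi,\Gamma)$-module $\mathscr{D}_i^+$ over $X$ and shows it interpolates the classical $\varepsilon(WD(V|_{G_{F_{v_i}}}))$ at Panchishkin points, with the correct Hodge polygon correction. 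This interpolation, combined with the Euler--Poincar\'e formula from step (ii), produces the desired cancellation and identifies the parity obstruction with a locally constant function. The hardest step is therefore the $p$-adic interpolation of local $\varepsilon$-factors for non-crystalline (merely potentially semistable) specialisations, which is exactly the reason the statement of Theorem \ref{T:thm A} is phrased in terms of triangulations rather than assuming a crystalline framing.
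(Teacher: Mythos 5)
The statement you have been asked to prove is not actually proved in the paper: it is reproduced \emph{verbatim} as a quotation of Theorem A of Pottharst--Xiao \cite{px}, ``for the reader's convenience,'' and the paper immediately applies it in the proof of Theorem \ref{parityapp} without any argument. There is therefore no proof of this theorem in the present paper against which your attempt can be compared; the comparison must be against \cite{px} itself.

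That said, a few remarks on your sketch. Steps (i) and (ii) --- constructing a Selmer complex over the family with the $\mathscr{D}_i^+$ as local conditions at $p$, invoking perfectness and the degree-$3$ self-duality coming from $j$ and the Lagrangian hypothesis, and identifying the specialisation at Panchishkin points with the Bloch--Kato condition (using purity at $w \nmid p$ and Berger's functor at $v_i \mid p$) --- are broadly consonant with the Nekov\'a\v{r}--Pottharst framework that \cite{px} does use. However, step (iii) is where you genuinely diverge: the invocation of Nakamura's $p$-adic local $\varepsilon$-factors for trianguline $(\varphi,\Gamma)$-modules over $X$ and an interpolation statement at Panchishkin points is not how Pottharst--Xiao argue. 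Their handling of the $\varepsilon$-factor side is considerably more indirect: they reduce to Nekov\'a\v{r}'s parity lemmas (developed in \cite{nekselmer} and its sequels), in which the jump in Selmer rank across the family is controlled by the parity of the rank of a derived-limit module in analytic Iwasawa theory, while the global $\varepsilon$-sign is shown to be locally constant by combining Bella\"iche--Chenevier-type rigidity at $w \nmid p$ (which you rightly note) with the observation that under the Panchishkin hypothesis the local sign at $v_i \mid p$ is determined by Hodge--Tate weight parities together with the discrete datum $\mathscr{D}_i^\pm$, quantities that cannot change in a connected family. In other words, they never need to interpolate an $\varepsilon$-factor $p$-adically at all, and this sidesteps exactly the difficulty you flag as ``the hardest step.'' Your route could in principle be made to work with enough input from the $p$-adic local Langlands/$\varepsilon$-isomorphism literature, but it is a genuinely different and heavier strategy than the one in \cite{px}, and it is not the route you should expect to reconstruct if you are trying to recover the cited proof.
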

Letting $U \subset \tilde{\mc{E}}$ be a Zariski open subset containing every 
noncritical classical point, as in the second part of Corollary \ref{vtrifam}, 
we apply the above theorem with $X = U \cap (\bigcap_{v'\ne v_1|p} 
\tilde{\mc{E}}^{v'-\mathrm{fs},\mathrm{good}}) $ (note that $X$ contains the 
points arising from both $f$ and $f_1$). The short exact sequences 
$\mathscr{S}_i$ are given by Corollary \ref{vtrifam} (for $i=1$) and Corollary 
\ref{trivprime} (for the other $i$). We can fix a skew-symmetric isomorphism 
$j$ (it exists since our Galois representations have cyclotomic 
determinant). The specialisation of $\mathscr{D}_i^+$ at a noncritical 
classical point of weight $(k_1,0)$ is de Rham of Hodge--Tate weight 
$-k_i/2$, and 
the specialisation of $\mathscr{D}_i^-$ is de Rham of Hodge--Tate weight 
$(k_i-2)/2$ (see 
Proposition 
\ref{strictri}). It follows from this and local-global compatibility at places 
in 
$S\backslash S_\infty$ that 
$X_{\mathrm{alg}}$ contains all the noncritical classical points of $X$ (in 
particular, the points arising from $f$ and $f_1$). Therefore the validity of 
the parity conjecture for 
$V_f$ is equivalent to the validity of the parity conjecture 
for $V_{f_1}$.
	
	We apply this argument $d-1$ more times, and  deduce that the 
	validity of the parity conjecture for $V_{g,\lambda}$ is equivalent to the 
	validity 
	of the parity conjecture for $V_{f_d}$, for a Hecke eigenform \[f_d \in 
	H^0(K^p\prod_{v|p}I_{v,1,n_d-1},\bigotimes_{i=1}^d\mc{L}(2,0,\epsilon_{d,v_i})).\]
	
	We moreover know that $f_d$ does not generate a 
	one-dimensional representation of $(D\otimes_F\A_{F})^\times$ (since it has 
	noncritical slope). Finally, by the 
	Jacquet--Langlands correspondence and \cite[Thm.~C]{nektame}, the parity 
	conjecture holds for $V_{f_d}$ (note that our Galois representations all 
	have cyclotomic determinant, so the automorphic representation associated 
	to $f_d$ has trivial central character), so the parity conjecture holds for 
	$V_f$.
\end{proof}

\section{The full eigenvariety}\label{sec:fullevar}
\subsection{Notation and preliminaries}
In this section we will consider an eigenvariety for automorphic forms on the 
quaternion algebra $D$ where the weights are allowed to vary at all places 
$v|p$. The set-up is very similar to \S\ref{ssec:partial} so we will be brief.

We set $T_0 = \prod_{v|p}T_{0,v}$, $I_{1} = \prod_{v|p} I_{v,1}$ and $I_{1,n} = 
\prod_{v|p}I_{v,1,n}$, and fix a 
compact open subgroup $K^p = \prod_{v\nmid p}K_v  \subset 
(\oo_D\otimes_F \A_{F,f}^{p})^\times$ such that $K = K^pI_1$ is neat.

 We let $X_p$ denote the kernel of  $N_{F/\Q}:\oo_{F,p}^\times \rightarrow 
 \Zp^\times$.

The weight space $\cW^\mathrm{full}$ is defined by letting 
\[\cW^{\mathrm{full}}(A) 
= \{\kappa \in 
\Hom_{cts}(T_{0},A^\times) : \kappa|_{Z(K)} = 1 \text{ and } \kappa|_{X_p} 
\text{ has finite order.}\}\] for $\Qp$-affinoid 
algebras $A$. We have $\dim(\cW^\mathrm{full}) = 1 + d$ and the locally 
algebraic 
points (defined as in the partial case) are Zariski dense in 
$\cW^\mathrm{full}$. If the $p$-adic Leopoldt conjecture holds for $F$ then 
$Z(K)$ has closure in 
$X_p$ a finite index subgroup, and the second condition in the definition of 
the points of weight space is automatic. 

For weights $\kappa \in \cW^\mathrm{full}(A)$ (together with a norm on $A$ 
which 
is adapted to $\kappa$) and $r \ge r_\kappa$ we get a Banach $A$-module of 
distributions $\mc{D}_\kappa^r$ equipped with a left action of the monoid 
$\prod_{v|p}\Delta_v$. We therefore obtain an $A$-module 
$H^0(K,\mc{D}_\kappa^r)$ equipped with an action of the Hecke algebra 
$\mathbb{T}$. 

We let $U_p = \prod_{v|p}U_v$, which is a compact operator on 
$H^0(K,\mc{D}_\kappa^r)$. We have its spectral variety $\mc{Z}^\mathrm{full} 
\rightarrow \cW^\mathrm{full}$ and a coherent sheaf $\mc{H}^\mathrm{full}$ on 
$\mc{Z}^\mathrm{full}$ coming from the modules $H^0(K,\mc{D}_\kappa^r)$, 
equipped 
with a map $\psi: \mathbb{T}\rightarrow \End(\mc{H}^\mathrm{full})$. We write 
$\mc{E}^\mathrm{full}$ for the eigenvariety associated to the eigenvariety 
datum 
$(\cW^\mathrm{full}\times \mb{A}^{1},\mc{H}^\mathrm{full},\mathbb{T},\psi)$ (or, equivalently, $(\mc{Z}^{full},\mc{H}^{full},\mb{T},\psi)$).
 $\mc{E}^\mathrm{full}$ is reduced and equidimensional of dimension $1+d$, with 
 a Zariski 
dense subset of classical points which we now describe.

Given integers $k=(k_v)_{v|p}$ and $w$ all of the same parity, 
and a finite order character $\epsilon: T_0 \rightarrow E^\times$ which is 
trivial on $\prod_{v|p}(1 + \varpi_v^n\oo_{F_v})^2$, we have an 
$I_{1,n-1}$-representation $\mc{L}(k,w,\epsilon) = 
\otimes_{v|p}\mc{L}_v(k_v,w,\epsilon)$.  As in \ref{ssec:lalgwts}, $k, w$ and 
$\epsilon$ correspond to a character $\kappa: T_{0} \rightarrow E^\times$, 
which we suppose is a point of $\cW^\mathrm{full}$ (in other words we suppose 
$\kappa|_{Z(K)} = 1$ and $\kappa|_{X_p}$ has finite order). We say that 
$\kappa$ is locally algebraic of weight 
$(k,w)$ and character $\epsilon$.

We then obtain a map (as in (\ref{maptoclass}))

\begin{equation*}\pi: 
H^0(K,\mathcal{D}^{r}_{\kappa}) 
\rightarrow  
H^0(K^pI_{1,n-1},\mc{L}(k,w,\epsilon)),\end{equation*}
where for each $v|p$ the action of $U_v$ on the target is the 
$\star$-\emph{action} defined by 
multiplying the standard action of $U_v$ by $p^{-(w-k_v+2)/2}$.

\begin{proposition}\label{class}
	For each $v|p$, let $h_v \in \Q_{\ge 0}$ with $h_v < k_v-1$. The map $\pi$ 
	induces an 
	isomorphism between the subspaces where $U_v$ acts with slope $\le h_v$ for 
	all $v|p$. 
\end{proposition}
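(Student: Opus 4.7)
The plan is to mirror the strategy of Proposition \ref{partialclass}, treating each place $v \mid p$ successively rather than handling only one.

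First, I would establish the multi-place analog of Lemma \ref{lem:fslevelindep}: namely that the natural inclusion
\[ \iota : H^0(K, \mc{D}_\kappa^r) \hookrightarrow H^0(K^p I_{1,n-1}, \mc{D}_\kappa^r) \]
is $U_v$-equivariant for every $v \mid p$ and induces an isomorphism on the simultaneous slope subspace where each $U_v$ acts with slope $\le h_v$. Since the operators $\{U_v\}_{v \mid p}$ mutually commute, and for each $v$ the double coset computation in Lemma \ref{lem:fslevelindep} shows that $U_v$ at level $I_{v,1,n-1}$ factors through the inclusion from level $I_{v,1,n-2}$, one may iterate the one-place argument over $v \mid p$ while keeping the other slope conditions unaffected.

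Having reduced to level $K^p I_{1,n-1}$, I would then exploit the factorisations $\mc{D}_\kappa^r \cong \widehat{\bigotimes}_{v \mid p} \mc{D}_{\kappa_v}^r$, where $\kappa_v = \kappa|_{T_{0,v}}$, and $\mc{L}(k,w,\epsilon) = \bigotimes_{v \mid p} \mc{L}_v(k_v,w,\epsilon_v)$, so that, after composing with $\iota$, the map $\pi$ becomes the tensor product (over $v \mid p$) of the surjective $I_{v,1,n-1}$-equivariant maps $\mc{D}_{\kappa_v}^r \twoheadrightarrow \mc{L}_v(k_v,w,\epsilon_v)$ constructed in \S\ref{ssec:lalgwts}. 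Applying \cite[Thm.~3.2.5]{han} place by place, I would show that the surjection at $v$ induces an isomorphism between $U_v$-slope $\le h_v$ subspaces, exactly as in the final step of Proposition \ref{partialclass}. Because $U_v$ acts only through the tensor factor at $v$, the operators $\{U_{v'}\}_{v' \ne v}$ and the structure at places away from $p$ commute with this step, so the slope conditions at the other places are preserved as one iterates over $v \mid p$.

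The main obstacle is formalising that Han's classicality argument applies factor by factor in this multi-variable setting, since Han's statement is written for a single place with finite free integral coefficients, whereas here the coefficients at each $v$ are twisted by the full distribution modules at the remaining places. The resolution is that Han's argument is intrinsic to the $v$-factor: it produces a $U_v$-stable filtration on $\mc{D}_{\kappa_v}^r$ whose successive subquotients support only $U_v$-slopes $\ge k_v - 1$, and this filtration survives tensoring with any Banach module on which $U_v$ acts trivially. Once this routine compatibility is verified, the proposition follows by induction on the set of places dividing $p$.
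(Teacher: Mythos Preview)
Your proposal is correct and follows essentially the same approach as the paper, which simply says ``As for Proposition \ref{partialclass}, this can be proved using the method of \cite[Thm.~3.2.5]{han}.'' You have unpacked in reasonable detail what this terse reference entails: the multi-place analogue of Lemma \ref{lem:fslevelindep} to pass from level $I_1$ to $I_{1,n-1}$, followed by an iterated application of Han's classicality argument one place at a time, noting that the slope conditions at the other places are preserved because the $U_v$ operators commute and each acts only through its own tensor factor.
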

\begin{proof}
As for Proposition \ref{partialclass}, this can be proved using the method of 
\cite[Thm.~3.2.5]{han}.
\end{proof}

We say that a point of $\mc{E}^\mathrm{full}$ is classical if its weight  is 
locally algebraic and the point corresponds to a Hecke eigenvector with 
non-zero image under the map $\pi$ above. We say that a classical point (with 
weight $(k,w)$) has noncritical slope if the corresponding $U_v$-eigenvalues 
have slope $< k_v-1$ for each $v|p$.

\begin{lemma}\label{etale}
	Let $z \in \mc{E}^\mathrm{full}$ be a classical point of weight $(k,w)$ and 
	character $\epsilon$ with noncritical slope. Let $\epsilon = 
	(\epsilon_1,\epsilon_2)$, where $\epsilon_i$ is a character of 
	$\oo_{F,p}^\times$. If $\epsilon_1|_{\oo_{F_v}^\times} = 
	\epsilon_2|_{\oo_{F_v}^\times}$ suppose moreover that $v_p(U_v(z)) \ne 
	\frac{k_v-1}{2}$. Then $\mc{E}^\mathrm{full}$ is \'{e}tale over 
	$\cW^\mathrm{full}$ at 
	$z$.
\end{lemma}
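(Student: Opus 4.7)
The plan is to reduce \'{e}taleness at $z$ to the statement that the Hecke algebra $\mb{T}$ acts as scalars (semisimply) on the classical generalized $\lambda_z$-eigenspace at the weight $\kappa_0=\kappa(z)$, and then verify this via a local refinement analysis at each $v\mid p$.

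First I would use that $\mc{E}^{\mathrm{full}}$ is equidimensional of the same dimension as the smooth space $\cW^{\mathrm{full}}$, and that it is constructed as the relative spectrum of the image of $\mb{T}$ inside the endomorphism ring of $\mc{H}^{\mathrm{full}}$. Since $z$ has noncritical slope, the slope decomposition of $\mc{H}^{\mathrm{full}}$ gives a finite flat neighborhood of $z$ over $\cW^{\mathrm{full}}$, so \'{e}taleness at $z$ reduces to the fiber at $\kappa_0$ being reduced at $z$. Concretely, letting $V_{\lambda_z}$ denote the generalized $\lambda_z$-eigenspace inside the fiber $\mc{H}^{\mathrm{full}}_{\kappa_0}$, this amounts to the image of $\mb{T}$ in $\End_{k(\kappa_0)}(V_{\lambda_z})$ being equal to $k(z)\cdot\mathrm{id}$. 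Since $\mb{T}$ is commutative and every $T\in\mb{T}$ restricted to $V_{\lambda_z}$ is $\lambda_z(T)$ plus a nilpotent endomorphism, this is equivalent to $\mb{T}$ acting on $V_{\lambda_z}$ as scalars, i.e.\ that every such nilpotent vanishes.

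Next, since $z$ has noncritical slope, Proposition \ref{class} identifies $V_{\lambda_z}$ with the generalized $\lambda_z$-eigenspace inside the classical space $H^0(K^pI_{1,n-1},\mc{L}(k,w,\epsilon))$ equipped with the $\star$-action of each $U_v$. By Jacquet--Langlands and strong multiplicity one for Hilbert modular forms, $\lambda_z$ arises from a unique cuspidal automorphic representation $\pi$ of $(D\otimes_F\A_F)^\times$, and this classical generalized eigenspace factors as a restricted tensor product of local pieces at the finite places. For $w\notin S$, the operators $T_w, S_w$ in $\mb{T}$ act on this factorization as the scalars prescribed by $\pi$, so the only possible source of non-scalar action is the $U_v$-operator on the local factor at each $v\mid p$.

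Finally I would carry out the local analysis at each $v\mid p$. Since $U_v(z)\ne 0$ on the eigenvariety, $\pi_v$ is either a (possibly ramified) principal series or a special representation, and in the Steinberg/special case the relevant Iwahori-level invariants are one-dimensional, so $U_v$ is automatically a scalar. In the principal series case, local--global compatibility (as in Proposition \ref{lgcv}) forces the restrictions $\{\mu_1|_{\oo_{F_v}^\times},\mu_2|_{\oo_{F_v}^\times}\}$ of the inducing characters of $\pi_v$ to equal $\{\epsilon_{1,v}|_{\oo_{F_v}^\times},\epsilon_{2,v}|_{\oo_{F_v}^\times}\}$. If these two characters of $\oo_{F_v}^\times$ differ, exactly one of the two $I_{v,1,n-1}$-refinements of $\pi_v$ transforms under $T_{0,v}$ by $(\epsilon_{1,v},\epsilon_{2,v})$, giving a one-dimensional local factor; if they coincide, both refinements contribute, spanning a two-dimensional space on which $U_v$ has eigenvalues $\tilde{\alpha}_v,\tilde{\beta}_v$ with $v_p(\tilde{\alpha}_v)+v_p(\tilde{\beta}_v)=k_v-1$. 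The hypothesis $v_p(U_v(z))\ne (k_v-1)/2$ then forces the two slopes to be distinct, hence $\tilde{\alpha}_v\ne\tilde{\beta}_v$, so the generalized $\tilde{\alpha}_v$-eigenspace is still one-dimensional and $U_v$ is a scalar on it. Combining the local contributions shows $\mb{T}$ acts as scalars on $V_{\lambda_z}$, completing the proof of \'{e}taleness. The main technical subtlety is precisely the excluded configuration $v_p(U_v(z))=(k_v-1)/2$ with $\epsilon_{1,v}|_{\oo_{F_v}^\times}=\epsilon_{2,v}|_{\oo_{F_v}^\times}$: there $\pi_v$ can be of the form $\pi(\mu,\mu)$ and the two-dimensional Iwahori module can carry a genuine Jordan block for $U_v$, obstructing \'{e}taleness. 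The hypothesis is designed precisely to rule this out.
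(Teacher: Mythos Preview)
Your local analysis at each $v\mid p$ is correct and is indeed the essential computational input: under the stated hypotheses the relevant local factor on which $U_v$ acts with the prescribed eigenvalue is one-dimensional, so $\T$ acts by scalars on the generalized eigenspace $V_{\lambda_z}$ inside the classical space. The gap is in your reduction step. You identify the fibre of the eigenvariety at $z$ with the image of $\T$ in $\End_{k(\kappa_0)}(V_{\lambda_z})$, but the eigenvariety is $\Spec$ of $\mathcal{O}(V)=\mathrm{im}\bigl(\T\otimes\mathcal{O}(B)\to\End_{\mathcal{O}(B)}(M)\bigr)$ for a finite locally free $\mathcal{O}(B)$-module $M$, and the natural surjection $\mathcal{O}(V)\otimes_{\mathcal{O}(B)}k(\kappa_0)\twoheadrightarrow\mathrm{im}\bigl(\T\to\End(M_{\kappa_0})\bigr)$ need not be injective. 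For a model of what can go wrong, take $\mathcal{O}(B)=k[[s]]$, $M=\mathcal{O}(B)^2$, and $\T$ generated over $\mathcal{O}(B)$ by $t=\mathrm{diag}(s,0)$: then $\mathcal{O}(V)\cong\mathcal{O}(B)[X]/\bigl(X(X-s)\bigr)$ has non-reduced fibre $k[X]/(X^2)$ at $s=0$, yet $t$ acts as zero (a scalar) on $M_0$. Geometrically, two sheets of the eigenvariety can collide at $z$ while the Hecke action on the classical fibre remains semisimple. (Your remark that the slope-$\le h$ piece of $\mathcal{H}^{\mathrm{full}}$ is finite flat over $B$ is correct, but that is $M$, not $\mathcal{O}(V)$; flatness of $\mathcal{O}(V)$ is precisely what is at stake.)

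The paper's proof closes this gap by working not only at $\kappa_0$ but over a Zariski-dense set $Z_0\subset B$ of locally algebraic weights. The same semisimplicity argument applies at each $\chi\in Z_0$, and combined with strong multiplicity one and \cite[Lem.~4.7]{chefern} (as in \cite[Thm.~4.10]{chefern}) one obtains that the fibre of $V$ over each such $\chi$ is a \emph{single} point with residue field $k(z)$. This constancy over a dense set, not merely semisimplicity at the one weight $\kappa_0$, is what forces the section $k(z)\otimes_L\mathcal{O}(B)\to\mathcal{O}(V)$ to be an isomorphism, via a birationality argument using that $B$ is normal (or the alternative endomorphism-algebra argument recorded at the end of the paper's proof). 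Your semisimplicity input is exactly what is needed, but the density and birationality steps are not optional.
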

\begin{proof}
This can be proved as in
\cite[Thms.~4.8,~4.10]{chefern}. Under our 
assumptions, the Hecke algebra 
$\mathbb{T}$ acts semisimply on the space of classical automorphic forms of 
fixed weight and character. We denote the residue field $k(\kappa(z))$ by $L$. 
Replacing $z$ by a point (which we also call $z$) lying over it in 
$\mc{E}^\mathrm{full}_L$, it suffices to show that $\mc{E}^\mathrm{full}_L$ is 
\'{e}tale over $\mc{W}^\mathrm{full}_L$ at $z$. By the construction of the 
eigenvariety, we can 
suppose that we have a geometrically connected (smooth) $L$-affinoid 
neighbourhood 
$B$ of the 
weight $\kappa(z) \in \mc{W}^\mathrm{full}_L$ and a finite locally free 
$\oo(B)$-module $M$, equipped with an $\oo(B)$-linear $\mathbb{T}$-action such 
that:\begin{enumerate}
	\item The affinoid spectrum $V$ of the image of $\oo(B)\otimes\mathbb{T}$ 
	in $\End_{\oo(B)}(M)$ is an open neighbourhood of $z \in 
	\mc{E}^\mathrm{full}$.
	\item For each point $x \in B$ and algebraic closure 
	$\overline{k(x)}$ of the residue field $k(x)$ there is an isomorphism 
	of $\overline{k(x)}\otimes\mathbb{T}$-modules 
	\[M\otimes_{\oo(B)}\overline{k(x)} \cong 
	\bigoplus_{y \in \kappa^{-1}(x)\cap 
	V}\bigoplus_{\iota \in \Hom_{k(x)}(k(y),\overline{k(x)})} 
	H^0(K,\mc{D}_x^r)\otimes_{k(x)}\overline{k(x)}[\iota\circ\psi_y]\] where 
	$\psi_y$ denotes the ($k(y)$-valued) system of Hecke 
	eigenvalues associated to $y$, and $[\iota\circ\psi_y]$ denotes the 
	generalised eigenspace.
	\item $\kappa^{-1}(\kappa(z)) = \{z\}$ and the natural $L$-algebra 
	surjection $\oo(V) \rightarrow k(z)$ has a section.
\end{enumerate}

Now we apply the classicality criterion (Proposition \ref{class}), strong 
multiplicity one, and \cite[Lem.~4.7]{chefern} as in 
\cite[Thm.~4.10]{chefern} to deduce that there is a Zariski-dense subset $Z_0 
\subset B$ of locally algebraic weights with residue field $L$ such that 
$\kappa$ is \'{e}tale at each 
point of $\kappa^{-1}(Z_0)$, and for each $\chi \in Z_0$, 
$\kappa^{-1}(\chi)\cap V$ consists of a single point with residue field $k(z)$.

We can now show that the map $k(z)\otimes_L \oo(B) \rightarrow \oo(V)$ obtained 
from 
assumption (3) is an isomorphism. Since $k(z)\otimes_L \oo(B)$ is a normal 
integral domain, it suffices to show that the map $\pi: \Spec(\oo(V)) 
\rightarrow 
\Spec(k(z)\otimes_L \oo(B))$ is birational. By generic flatness, there is a 
dense open subscheme $U$ of the target such that $\pi|_U$ is finite flat. Since 
$\pi$ maps irreducible components surjectively onto irreducible components, 
$\pi^{-1}(U)$ is a dense open subscheme of the source. Since $Z_0 \cap U$ is 
non-empty, $\pi|_U$ has degree one and is therefore an isomorphism, which shows 
that $\pi$ is birational as desired. In fact, it is not necessary to use the 
normality of $B$ (as explained to us by Chenevier) --- $\oo(V)$ is a subalgebra 
of $\End_{k(z)\otimes_L\oo(B)}(k(z)\otimes_L M)$ and each element of $\oo(V)$ 
acts as a scalar in $k(z)$ when we specialise at a weight $\chi \in Z_0$. Since 
$B$ is reduced and $Z_0$ is Zariski dense in $B$, this is enough to conclude 
that each element of $\oo(V)$ acts as a scalar (in $k(z)\otimes_L\oo(B)$) on 
$k(z)\otimes_L M$.
\end{proof}

\subsection{Mapping from partial eigenvarieties to the full eigenvariety}
Now we fix a place $v|p$ and consider the eigenvariety $\mc{E}(k^v,w)$ for 
fixed $k^v, w$ as in 
section \ref{ssec:halo}. For this partial eigenvariety, we fix the level 
structure 
$K'_{v'} = I'_{v',1,n-1}$ for 
each place $v'|p$ with $v' \ne v$, and let $K' = K^pI_{v,1}\prod_{v'\ne 
v}K'_{v'}$. We denote by $\mc{E}(k^v,w)^{U_p-fs} \subset 
\mc{E}(k^v,w)$ the union of 
irreducible components (with the reduced subspace structure) given by the image 
of $\cap_{v\neq v'|p}\widetilde{\mc{E}}^{v'-fs}$ (see Definition 
\ref{vprimefs}).

We can also construct a spectral variety $\mc{Z}^{U_p}(k^v,w)$ and eigenvariety 
$\mc{E}^{U_p}(k^v,w)$ using the compact operator $U_p$ instead of $U_v$. The 
closed points of $\mc{E}^{U_p}(k^v,w)$ and $\mc{E}(k^v,w)^{U_p-fs}$ correspond 
to the same systems of Hecke eigenvalues, so the following lemma should be no 
surprise.

\begin{lemma}\label{switchspectral}
There is a canonical isomorphism 
$\mc{E}^{U_p}(k^v,w)^{red}\cong\mc{E}(k^v,w)^{U_p-fs}$, 
over $\mc{W}$, compatible with the Hecke operators. 
\end{lemma}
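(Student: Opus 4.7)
The plan is to exhibit canonical inverse morphisms between the two rigid spaces by invoking the universal property of the eigenvariety associated to an eigenvariety datum, as in \cite[\S3.1]{irreducible}. Both $\mc{E}^{U_p}(k^v,w)^{red}$ and $\mc{E}(k^v,w)^{U_p-fs}$ are reduced rigid spaces over $\mc{W}$, and they are built from the same underlying data: the Hecke modules $H^0(K',\mc{L}^v(k^v,w)\otimes\mc{D}^r_\ka)$ equipped with the action of $\mb{T}$. They differ only in the choice of compact operator used to form the spectral variety ($U_v$ versus $U_p=\prod_{v'|p}U_{v'}$).

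First, I would check that on both sides each $U_{v'}$, and hence $U_p$, is a unit. On $\mc{E}(k^v,w)^{U_p-fs}$ this is the content of its definition: $U_v$ is invertible because $\mc{E}(k^v,w)$ lives over the spectral variety $\mc{Z}^{U_v}$, and for $v'\ne v$ the invertibility of $U_{v'}$ follows from the definition of the locus via $\bigcap_{v'\ne v}\widetilde{\mc{E}}^{v'-fs}$ (see Definition~\ref{vprimefs}) together with reducedness. On $\mc{E}^{U_p}(k^v,w)^{red}$, $U_p$ is invertible by construction, and since the Hecke operators act as scalars at closed points, the factorisation $U_p(z)=\prod_{v'}U_{v'}(z)$ forces $U_{v'}(z)\neq 0$ at every closed point $z$; reducedness then promotes this pointwise non-vanishing to invertibility as a global function.

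Second, I would use the universal property to construct morphisms in both directions. On $\mc{E}(k^v,w)^{U_p-fs}$, the reciprocal of $U_p$ gives a morphism to the spectral variety $\mc{Z}^{U_p}(k^v,w)$, and together with the compatible Hecke eigensystem this lifts to a morphism $\Phi\colon\mc{E}(k^v,w)^{U_p-fs}\to\mc{E}^{U_p}(k^v,w)^{red}$ over $\mc{W}$. Symmetrically, the reciprocal of $U_v$ on $\mc{E}^{U_p}(k^v,w)^{red}$ gives a morphism to $\mc{Z}^{U_v}$, yielding a morphism $\Psi\colon\mc{E}^{U_p}(k^v,w)^{red}\to\mc{E}(k^v,w)$; since each $U_{v'}$ is a unit on the source, $\Psi$ factors through the union of irreducible components of $\mc{E}(k^v,w)$ on which each $U_{v'}$ is generically nonzero, that is, through $\mc{E}(k^v,w)^{U_p-fs}$.

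Finally, the compositions $\Phi\circ\Psi$ and $\Psi\circ\Phi$ agree with the respective identity morphisms at closed points, because both constructions preserve the underlying system of Hecke eigenvalues. Since both spaces are reduced and their closed points are Zariski dense, the compositions equal the identities, giving the desired isomorphism. The main obstacle is verifying that $\Psi$ factors through $\mc{E}(k^v,w)^{U_p-fs}$; this requires matching the intrinsic definition (the image of $\bigcap_{v'\ne v}\widetilde{\mc{E}}^{v'-fs}$ under the normalisation map) with the extrinsic characterization as the union of components where each $U_{v'}$ is generically nonzero, which is carried out using Zariski density of classical points and passing through the normalisation $\widetilde{\mc{E}}$.
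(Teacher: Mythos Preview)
Your approach is genuinely different from the paper's, and as written it has a gap at the point you lean on a ``universal property''.

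The paper does not construct inverse maps directly. Instead, it builds an auxiliary eigenvariety datum: starting from a slope datum $(U,h)$ for $U_p$, the finite $\oo(U)$-module $M_{\le h}$ still carries an action of the compact operator $U_v$, so one obtains a factorisation of the $U_v$-Fredholm series and hence a closed immersion of a ``double'' spectral space $\mc{Z}^{U_p,U_v}$ into $\mc{Z}^{U_v}(k^v,w)$. The eigenvariety built from the sheaf $\mc{H}^{U_p,U_v}$ on this space is tautologically the same as $\mc{E}^{U_p}(k^v,w)$ (since $U_v\in\mb{T}$), and then \cite[Thm.~3.2.1]{irreducible} (the comparison theorem for eigenvariety data, not a universal property) produces a closed immersion $\mc{E}^{U_p}(k^v,w)^{red}\hookrightarrow\mc{E}(k^v,w)$ whose image is identified with $\mc{E}(k^v,w)^{U_p-fs}$ by matching closed points.

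The step in your argument that does not go through as stated is the sentence ``the reciprocal of $U_p$ gives a morphism to the spectral variety $\mc{Z}^{U_p}(k^v,w)$, and together with the compatible Hecke eigensystem this lifts to a morphism''. There is no off-the-shelf universal property of eigenvarieties in \cite[\S3.1]{irreducible} that accepts as input a reduced space with a $\mb{T}$-eigensystem and outputs a map. Concretely, to land in $\mc{Z}^{U_p}$ you must show that the entire Fredholm series $F_{U_p}(T)$ pulls back to zero under $(\kappa,U_p^{-1})$; this is a statement about an infinite sum $\sum_n c_n(\kappa)U_p^{-n}$, and you need $U_p^{-1}$ to be locally power-bounded, which is not automatic on $\mc{E}(k^v,w)^{U_p-fs}$ since that space is sliced according to $U_v$-slopes, not $U_p$-slopes. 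Even granting a map to $\mc{Z}^{U_p}$, lifting it to $\mc{E}^{U_p}(k^v,w)$ requires the $\mb{T}$-eigensystem on your source to factor, locally over each slope piece, through the image of $\mb{T}$ in $\End(M_{\le h})$; this is again not formal. The paper's auxiliary-datum construction is precisely what packages these verifications so that the comparison theorem \cite[Thm.~3.2.1]{irreducible} applies. If you want to salvage your strategy, you should replace the appeal to a universal property by an appeal to that comparison theorem, which then forces you to produce compatible eigenvariety data --- and that is essentially what the paper does.
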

\begin{proof}
	Suppose we have a slope datum $(U,h)$ (see \cite[Defn.~2.3.1]{extended}) 
	corresponding to an open subset $\mc{Z}^{U_p}_{U,h} \subset 
	\mc{Z}^{U_p}(k^v,w)$. We therefore have a slope decomposition 
	\[H^0(K',\mc{L}^v(k^v,w)\otimes_{\Zp}\mathcal{D}^{r}_{U}) = M = M_{\le h} 
	\oplus M_{> h}\] and a corresponding factorisation $\det(1-U_vT) = 
	QS$ of the characteristic power series of $U_v$ on $M$, so we 
	obtain a closed immersion $Z(Q) := \{Q = 0\} \hookrightarrow 
	\mc{Z}^{U_v}(k^v,w)|_U$. The module $M_{\le h}$ defines a coherent sheaf on 
	$Z(Q)$. 
	
	Gluing, we obtain a rigid space $\mc{Z}^{U_p,U_v}$ 
	equipped with a closed immersion $\mc{Z}^{U_p,U_v} \hookrightarrow 
	\mc{Z}^{U_v}(k^v,w)$, together with a coherent sheaf $\mc{H}^{U_p,U_v}$ on 
	$\mc{Z}^{U_p,U_v}$. 
	
	Over $\mc{Z}^{U_p}_{U,h}$, $\mc{E}^{U_p}(k^v,w)$ is given by 
	the spectrum 
	of the affinoid algebra 
	\[\mc{T}_{U,h} = \mathrm{im}(\mathbb{T}\otimes_{\Zp}\oo(U) 
	\rightarrow \End_{\oo(U)}M_{\le h}).\] The eigenvariety 
	associated to the 
	datum $(\mc{W}\times \mb{A}^{1},\mc{H}^{U_p,U_v},\mathbb{T},\psi)$ has the 
	same description (since $U_v \in \mathbb{T}$), so these two eigenvarieties 
	are canonically isomorphic.
	
	On the other hand, we can identify the closed points of 
	$\mc{E}(k^v,w)^{U_p-fs}$ 
	and $\mc{E}^{U_p}(k^v,w)$, since they correspond to the same systems of 
	Hecke eigenvalues. It follows from \cite[Thm.~3.2.1]{irreducible}, 
	applied to the eigenvariety data 
	$(\mc{W}\times \mb{A}^{1},\mc{H}^{U_p,U_v},\mathbb{T},\psi)$ and 
	$(\cW \times \mb{A}^{1},\mc{H},\T,\psi)$
	  that there is a canonical closed immersion $\mc{E}^{U_p}(k^v,w)^{red} 
	  \hookrightarrow \mc{E}(k^v,w)$, which induces the desired isomorphism 
	  $\mc{E}^{U_p}(k^v,w)^{red}\cong \mc{E}(k^v,w)^{U_p-fs}$.
\end{proof}

To compare $\mc{E}(k^v,w)^{U_p-fs}$ with $\mc{E}^\mathrm{full}$, we need to fix 
a 
character $\epsilon^v: \prod_{v' \ne v}I_{v',1,n-1}/I'_{v',1,n-1} \rightarrow 
\Qp^\times$, and consider the eigenvarieties  $\mc{E}^{U_p}(k^v,w,\epsilon^v)$, 
$\mc{E}(k^v,w,\epsilon^v)$ given by replacing the coherent sheaf $\mc{H}$ in 
the eigenvariety data with the $\epsilon^v$-isotypic piece 
$\mc{H}(\epsilon^v)$. These are unions of connected components in the original 
eigenvarieties. They are supported over the union of connected components 
$\mc{W}(\epsilon^v) 
\subset \mc{W}$ given by weights $\kappa_v$ such that 
$\kappa_v\epsilon^v\chi^v|_{Z(K)} = 1$, where $\chi^v$ is the highest weight of 
$\mc{L}^v(k^v,w)$. Note that in general points $\kappa_v \in \mc{W}$ satisfy 
the weaker condition 
that $\kappa_v\chi^v|_{Z(K')} = 1$.

\begin{definition}We write $\iota_{(k^v,w,\epsilon^v)}: \mc{W}(\epsilon^v) 
\hookrightarrow 
\mc{W}^\mathrm{full}$ for the closed immersion defined by 
\[\iota_{(k^v,w,\epsilon^v)}(\kappa_v)= \kappa_v\epsilon^v\chi^v\] 
where 
$\chi^v$ is the highest weight of $\mc{L}^v(k^v,w)$.
\end{definition}

\begin{lemma}\label{embedpartial}
	There is a canonical closed immersion \[\mc{E}(k^v,w,\epsilon^v)^{U_p-fs} 
	\hookrightarrow \mc{E}^\mathrm{full},\] lying over the map 
	$\iota_{(k^v,w,\epsilon^v)}: 
	\mc{W}(\epsilon^v) \hookrightarrow \mc{W}^\mathrm{full}$, compatible with 
	the Hecke 
	operators. 
\end{lemma}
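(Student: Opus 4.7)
The plan is to reduce to a comparison of eigenvariety data built from the same compact operator $U_p$ and then invoke \cite[Thm.~3.2.1]{irreducible}, following the strategy in the proof of Lemma \ref{switchspectral}. By that lemma, $\mc{E}(k^v,w,\epsilon^v)^{U_p-fs}$ is canonically isomorphic to $\mc{E}^{U_p}(k^v,w,\epsilon^v)^{red}$, so it suffices to construct a canonical closed immersion $\mc{E}^{U_p}(k^v,w,\epsilon^v)^{red}\hookrightarrow \mc{E}^{\mathrm{full}}$ over $\iota_{(k^v,w,\epsilon^v)}$, compatible with the Hecke action. Both eigenvarieties are then built from coherent sheaves over spectral varieties for the same compact operator $U_p$ acting via the same Hecke algebra $\mathbb{T}$, and what is needed is a $\mathbb{T}$-equivariant surjection of coherent sheaves intertwining them.

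The crux will be to construct this surjection fibrewise and then in families. At a point $\kappa_v\in\mc{W}(\epsilon^v)$ with image $\kappa:=\iota_{(k^v,w,\epsilon^v)}(\kappa_v)\in\mc{W}^{\mathrm{full}}$, each factor $\kappa_{v'}$ ($v'\ne v$) of $\kappa$ is locally algebraic of weight $(k_{v'},w)$ and character $\epsilon^v_{v'}$. The discussion of \S\ref{ssec:lalgwts} then provides surjective, $I_{v',1,n-1}$-equivariant maps $\mc{D}^r_{\kappa_{v'}}\twoheadrightarrow \mc{L}_{v'}(k_{v'},w,\epsilon^v_{v'})$ for each $v'\ne v$. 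Tensoring these together with the identity on the distribution space at $v$ produces a surjective specialisation
\[
\mc{D}^r_{\kappa}\twoheadrightarrow \Bigl(\bigotimes_{v'\ne v}\mc{L}_{v'}(k_{v'},w,\epsilon^v_{v'})\Bigr)\otimes_{\Zp}\mc{D}^r_{\kappa_v}.
\]
Extending this over an affinoid $U\subset\mc{W}(\epsilon^v)$ with $\kappa_v$ varying, taking $H^0(K^pI_{1,n-1},-)$, and comparing the source with $K^pI_1$-invariants via (the extension to all $v'\ne v$ of) Lemma \ref{lem:fslevelindep} while cutting the target by the $\epsilon^v$-isotypic component for the action of $\prod_{v'\ne v}I_{v',1,n-1}/I'_{v',1,n-1}$, yields a surjective, $\mathbb{T}$-equivariant map of coherent Banach $\oo(U)$-modules that is compatible with the $U_p$-action and the associated slope decompositions.

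With this map in hand, the argument concludes as in Lemma \ref{switchspectral}. Over any slope datum $(U,h)$ for $U_p$, the affinoid algebra cutting out $\mc{E}^{\mathrm{full}}$ over $\iota(U)$ is the image of $\mathbb{T}\otimes\oo(\iota(U))$ inside the endomorphism ring of the slope-$\le h$ piece of the source sheaf; by $\mathbb{T}$-equivariance and surjectivity this image surjects onto the corresponding image inside the endomorphism ring of the target, which is the affinoid algebra cutting out $\mc{E}^{U_p}(k^v,w,\epsilon^v)$. This yields a closed immersion of affinoid spectra locally, and gluing via \cite[Thm.~3.2.1]{irreducible} produces the required canonical closed immersion after passing to reductions.

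The main obstacle will be to make precise the family version of the specialisation map and verify full $\mathbb{T}$-equivariance (including the compactness of $U_p$ and the behaviour under slope decomposition), together with the reconciliation between the level $K=K^pI_1$ used for $\mc{E}^{\mathrm{full}}$ and the level $K'=K^pI_{v,1}\prod_{v'\ne v}I'_{v',1,n-1}$ with $\epsilon^v$-twist used for the partial eigenvariety. The interplay of Lemma \ref{lem:fslevelindep} and the definition of $\mc{W}(\epsilon^v)$ should make this reconciliation formal once the fibrewise specialisation has been lifted to a coherent map of Banach modules in families.
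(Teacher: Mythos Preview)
Your overall strategy is correct and shares its two key ingredients with the paper's proof: the reduction via Lemma~\ref{switchspectral} to $\mc{E}^{U_p}(k^v,w,\epsilon^v)^{red}$, and the appeal to \cite[Thm.~3.2.1]{irreducible}. However, the paper's argument is considerably shorter. After the reduction, the paper simply observes that the classical points of $\mc{E}^{U_p}(k^v,w,\epsilon^v)^{red}$ naturally correspond to a subset of the classical points of $\mc{E}^{\mathrm{full}}$ (with the same Hecke eigenvalues), and invokes \cite[Thm.~3.2.1]{irreducible} directly. That interpolation theorem requires only such a matching of Zariski-dense subsets of points compatible with the Hecke data; it does not require you to exhibit a $\mathbb{T}$-equivariant surjection of the underlying coherent sheaves.

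What you are doing---building the family specialisation map $\mc{D}^r_\kappa \twoheadrightarrow \mc{L}^v(k^v,w,\epsilon^v)\otimes\mc{D}^r_{\kappa_v}$, reconciling the levels $K^pI_1$ and $K'=K^pI_{v,1}\prod_{v'\neq v}I'_{v',1,n-1}$ via Lemma~\ref{lem:fslevelindep} and $\epsilon^v$-isotypic projection, and then reading off a surjection of Hecke algebras on slope pieces---is essentially a hands-on reproof of what the interpolation theorem already packages. It would work, but the bookkeeping you flag as ``the main obstacle'' (level comparison and family version of the specialisation) is exactly what the paper avoids by appealing to the black box. So your route is more constructive but buys nothing extra here; the paper's route is shorter and sidesteps the technicalities entirely.
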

\begin{proof}
	Applying Lemma \ref{switchspectral}, we replace 
	$\mc{E}(k^v,w,\epsilon^v)^{U_p-fs}$ in the statement with 
	$\mc{E}^{U_p}(k^v,w,\epsilon^v)^{red}$. The classical points of 
	$\mc{E}^{U_p}(k^v,w,\epsilon^v)^{red}$ naturally correspond to a subset of 
	the classical points of $\mc{E}^\mathrm{full}$, and we apply 
	\cite[Thm.~3.2.1]{irreducible} to obtain the desired closed immersion.
\end{proof}

\begin{corollary}
	Let $C$ be an irreducible component of the full eigenvariety 
	$\mc{E}^\mathrm{full}$. Then 
	$C$ contains a noncritical slope classical point of weight 
	$(2,\ldots,2,0)$\footnote{or any other algebraic weight}.
\end{corollary}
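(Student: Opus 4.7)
The plan is to mimic the inductive argument from the proof of Theorem \ref{parityapp}: start with a classical point of $C$ that is noncritical at every place dividing $p$, and then use the partial eigenvarieties of Section \ref{ssec:halo}, one place at a time, to bring all the weights down to $2$. Classical points are Zariski dense in $\mc{E}^\mathrm{full}$ by the standard eigenvariety construction together with Proposition \ref{class}, and the condition $v_p(U_v) < k_v - 1$ is an open condition on the locally algebraic locus; a standard argument (combining a local slope bound with Coleman's classicality criterion) then gives Zariski density in $C$ of classical points noncritical at every $v\mid p$. Fix one such $z_0 \in C$, of weight $(k_{v_1},\ldots,k_{v_d},w)$, after choosing an ordering $v_1,\ldots,v_d$ of the places above $p$.

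The inductive step is: given a classical $z_i \in C$ noncritical at every place and of weight $(2,\ldots,2,k_{v_{i+1}},\ldots,k_{v_d},w)$, produce a $z_{i+1} \in C$ of weight $(2,\ldots,2,2,k_{v_{i+2}},\ldots,k_{v_d},w)$ that is still noncritical everywhere. Since $z_i$ is noncritical at each $v' \ne v_{i+1}$ we have $U_{v'}(z_i)\ne 0$, so by Lemma \ref{embedpartial}, applied with $v = v_{i+1}$ and with $n$ large enough that the away-from-$v_{i+1}$ character of $z_i$ is trivial on $\prod_{v'\ne v_{i+1}}I'_{v',1,n-1}$, the point $z_i$ is the image of a point $\tilde z_i$ in the partial eigenvariety $\mc{E}(k^{v_{i+1}},w,\epsilon^{v_{i+1}})^{U_p-\mathrm{fs}}$. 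Let $C^{\mathrm{part}}_i$ be the irreducible component containing $\tilde z_i$. By Proposition \ref{prop:wtkpoints}(2), $C^\mathrm{part}_i$ contains a classical point $\tilde z_{i+1}$ with $v_{i+1}$-weight $2$ and $U_{v_{i+1}}$-slope strictly less than $1$; take $z_{i+1}$ to be its image in $\mc{E}^\mathrm{full}$.

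I would then verify the three required properties: (a) $z_{i+1} \in C$, because the image of the irreducible set $C^\mathrm{part}_i$ is irreducible and contains $z_i \in C$; (b) $z_{i+1}$ is noncritical at $v_{i+1}$ by construction; and (c) for every $v' \ne v_{i+1}$, $z_{i+1}$ is noncritical at $v'$, because by Lemma \ref{lem:redlocus} the locus in $\tilde{\mc{E}}^{v'-\mathrm{fs}}$ where $v_p(U_{v'}) = k_{v'}-1$ is a union of connected components, so passing within the irreducible $C^\mathrm{part}_i$ from $\tilde z_i$ to $\tilde z_{i+1}$ cannot cross into it. After $d$ iterations the point $z_d \in C$ has weight $(2,\ldots,2,w)$ and is noncritical at every place; the identical procedure, replacing the target $2$ at each step by an arbitrary integer $k\equiv w\pmod 2$, reaches any algebraic weight compatible with the component $C$ in place of $(2,\ldots,2,0)$.

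The main obstacle is exactly point (c): maintaining noncriticality at the places in which we are \emph{not} currently moving. Lemma \ref{lem:redlocus} is what makes this work --- the critical values $0$ and $k_{v'}-1$ of $v_p(U_{v'})$ are precisely those at which $V_z|_{G_{F_{v'}}}$ becomes reducible, which is a Zariski-closed condition, and so the ``bad'' locus breaks off as separate connected components rather than appearing as a codimension-one subvariety that the path could cross.
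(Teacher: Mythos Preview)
Your inductive scheme is the same as the paper's, and steps (b) and (c) are handled correctly (for (c), the precise statement you want is that $\tilde{\mc{E}}^{v'-\mathrm{fs},\mathrm{good}}$, which excludes exactly the slope-$(k_{v'}-1)$ locus, is a union of connected components). The gap is in step (a).

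The image of $C^{\mathrm{part}}_i$ in $\mc{E}^{\mathrm{full}}$ is indeed irreducible, but that only guarantees it lies inside \emph{some} irreducible component of $\mc{E}^{\mathrm{full}}$, not that it lies inside $C$. If $z_i$ is a singular point of $\mc{E}^{\mathrm{full}}$ it may sit on several irreducible components, and your curve could wander off into one of the others; then $z_{i+1}$ need not lie in $C$ at all. This is exactly what the paper's proof takes pains to rule out: before starting the induction one chooses the initial point so that, at every place $v\mid p$, the two components of the finite-order character $\epsilon$ restricted to $\oo_{F_v}^\times$ are distinct (this can be arranged by taking the weight near the boundary of weight space), and one checks that this property is preserved along each partial-eigenvariety step. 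Lemma \ref{etale} then shows that $\mc{E}^{\mathrm{full}} \to \mc{W}^{\mathrm{full}}$ is \'{e}tale at every $z_i$, so each $z_i$ is a smooth point of $\mc{E}^{\mathrm{full}}$ and hence lies on a \emph{unique} irreducible component. Only then does ``irreducible curve through $z_i$'' force ``curve contained in $C$''.

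A second, smaller gap: your procedure ends at weight $(2,\ldots,2,w)$ with $w$ inherited from $z_0$, and your parity constraint $k\equiv w\pmod 2$ never moves $w$. To reach $(2,\ldots,2,0)$ the paper first arranges $w$ even when choosing $z_0$, and then at the very end moves from $(2,\ldots,2,w)$ to $(2,\ldots,2,0)$ along the one-dimensional family of twists by powers of the cyclotomic character inside $\mc{E}^{\mathrm{full}}$.
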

\begin{proof}
First we note that $C$ contains a noncritical slope classical point $x_0$ of 
weight 
$(k_1,\ldots,k_d,w)$ and character $\epsilon = (\epsilon_1,\epsilon_2)$ with 
$w$ even (since the image of $C$ in weight space is 
Zariski open). We can moreover assume that the characters 
$\epsilon_1|_{\oo_{F_v}^\times}$ and $\epsilon_2|_{\oo_{F_v}^\times}$ are 
distinct for all $v|p$ (for example by considering weights in a sufficiently 
small `boundary poly-annulus' of the weight space). We do this to ensure tht 
$x_0$ will be \'{e}tale over weight space. As in the 
proof of Theorem \ref{parityapp}, we obtain 
a sequence 
$x_1, x_2, \ldots, x_d$ of noncritical slope classical points where each $x_i$ 
has a weight $k(i)$ with $k(i)_1 = k(i)_2 = \cdots k(i)_i =2$, with the 
remaining weight components the same as $x_0$. The $v_{i+1}$ part of the 
character $\epsilon$ changes when we move from $x_i$ to $x_{i+1}$, but it keeps 
the property that it has distinct components. So, by Lemma \ref{etale}, the map 
from $\mc{E}^\mathrm{full}$ to $\mc{W}^\mathrm{full}$ is 
\'{e}tale at each of the points $x_i$

For each $i$, $x_i$ and 
$x_{i+1}$ lie in the image of an irreducible component of 
$\mc{E}(k(i)^{v_{i+1}},w,\epsilon^{v_{i+1}})^{U_p-fs}$ under the closed 
immersion of 
Lemma \ref{embedpartial} (for some choice of $\epsilon^{v_{i+1}}$). It follows 
that 
$x_i$ and $x_{i+1}$ lie in a common irreducible component of 
$\mc{E}^\mathrm{full}$.

Since the map from $\mc{E}^\mathrm{full}$ to $\mc{W}^\mathrm{full}$ is 
\'{e}tale at each of the points $x_i$, $x_i$ is a smooth point of 
$\mc{E}^\mathrm{full}$ and is therefore contained in a unique irreducible 
component of $\mc{E}^\mathrm{full}$. We deduce that $x_i \in C$ for 
all $i$. The point $x_d$ has weight $(2,\ldots,2,w)$. Finally, there 
is a one-dimensional family of twists by powers of the cyclotomic character 
connecting $x_d$ to a noncritical slope classical point of weight 
$(2,\ldots,2,0)$, 
which gives the desired point of $C$.
\end{proof}
\begin{remark}
	At least for irreducible components $C$ whose associated mod $p$ Galois 
	representation $\rhobar$ is absolutely irreducible, one can deduce cases of 
	the parity conjecture directly from the above corollary, using 
	\cite[Thm.~A]{px}. Without this assumption on $\rhobar$, the Galois 
	pseudocharacter over $C$ cannot be automatically lifted to a genuine family 
	of Galois representations over $C$. This is why our proof of Theorem 
	\ref{parityapp} instead uses one-dimensional families coming from the 
	partial 
	eigenvarieties, where we can apply Proposition \ref{liftdet}.
\end{remark}

\bibliographystyle{alpha}
\bibliography{../halo}

\end{document}